\def\0{\bf 0}
\def\1{\bf 1}
\newcommand{\rrvert}{\vert}
\newcommand{\llvert}{\vert}
\newtheorem{theorem}{Theorem}[section]
\newtheorem{example}{Example}[section]
\newtheorem{definition}{Definition}[section]
\newtheorem{lemma}{Lemma}[section]
\newtheorem{remark}{Remark}[section]
\newtheorem{corollary}{Corollary}[section]
\begin{document}

\title{\bf Variable selection with Hamming loss}
\author{Butucea, C.$^{1,2}$, Ndaoud, M.$^{2}$, Stepanova, N.A.$^{3}$, and Tsybakov, A.B.$^{2}$\\
{\small $^1$ Universit\'e Paris-Est Marne-la-Vall\'ee, LAMA(UMR 8050), UPEM, UPEC, CNRS,}\\ {\small F-77454, Marne-la-Vall\'ee, France} \\
{\small $^2$ CREST, ENSAE, Universit\'e Paris-Saclay.
5, ave. Henry le Chatelier,} \\ 
{\small 91120 Palaiseau Cedex, France}\\
{\small $^3$ School of Mathematics and Statistics, Carleton University,
Ottawa, Ontario, K1S 5B6 Canada}
}

\maketitle


\begin{abstract}
We derive nonasymptotic bounds for the minimax risk of variable
selection under expected Hamming loss in the Gaussian mean model in
$\mathbb{R}^d$ for classes of at most $s$-sparse vectors separated from 0 by a
constant $a>0$. In some cases, we get exact expressions for the
nonasymptotic minimax risk as a function of $d,s,a$ and find explicitly
the minimax selectors. These results are extended to dependent or
non-Gaussian observations and to the problem of crowdsourcing.
Analogous conclusions are obtained for the probability of wrong
recovery of the sparsity pattern. As corollaries, we derive necessary
and sufficient conditions for such asymptotic properties as almost full
recovery and exact recovery. Moreover, we propose data-driven selectors
that provide almost full and exact recovery adaptively to the
parameters of the classes.\end{abstract}

\noindent {\bf Keywords:} adaptive variable selection, almost full recovery, exact recovery, Hamming loss, minimax selectors, nonasymptotic minimax selection bounds, phase transitions

\section{Introduction}
In recent years, the problem of variable selection in high-dimensional
regression models has been
extensively studied from the theoretical and computational viewpoints.
In making effective high-dimensional inference, sparsity plays a key role.
With regard to variable selection in sparse high-dimensional regression,
the Lasso, Dantzig selector, other penalized techniques as well as
marginal regression were analyzed in detail; see, for example,
\cite{MB2006,ZhaoYu2006,Wainwright2009,Lounici2008,WR2009,Zhang2010,MB2010,GJW2012,JJ2012} and the references cited therein.
Several other recent papers deal with sparse variable selection in
nonparametric regression; see, for example, \cite
{LW2008,BL2008,Dalalyan,IngStep2014,ButStep2015}.

In this paper, we study the problem of variable selection in the
Gaussian sequence model
%
\begin{equation}
\label{vectormodel} X_j = \theta_j + \sigma
\xi_j, \qquad j =1,\dots, d,
\end{equation}
where $\xi_1,\dots,\xi_d$ are i.i.d. standard Gaussian random
variables, $\sigma>0$ is the noise level, and $\theta= (\theta
_1,\dots,\theta_d)$ is {an} unknown vector of parameters to be
estimated. We assume that $\theta$ is $(s,a)$-\textit{sparse}, which is
understood in the sense that $\theta$ belongs to one of the following sets:
\begin{eqnarray*}
\Theta_d(s,a) & = & \bigl\{\theta\in\mathbb{R}^d: \mbox{
there exists a set } S\subseteq\{1,\dots,d\} \mbox{ with at most } s \mbox{ elements }
\\
&&  \mbox{such that } \llvert \theta_j \rrvert \geq a \mbox{
for all } j \in S, \mbox{ and } \theta_j=0 \mbox{ for all } j \notin
S \bigr\}
\end{eqnarray*}
or
\begin{eqnarray*}
\Theta_d^+(s,a) & = & \bigl\{\theta\in\mathbb{R}^d:
\mbox{ there exists a set } S\subseteq\{1,\dots,d\} \mbox{ with at most } s \mbox{
elements }
\\
&&  \mbox{such that } \theta_j \geq a \mbox{ for all } j \in
S, \mbox{ and } \theta_j=0 \mbox{ for all } j \notin S \bigr\}.
\end{eqnarray*}
Here, $a>0$ and $s\in\{1,\dots,d\}$ are given constants.

We study the problem of selecting
the {relevant} components of $\theta$, that is, of estimating the vector
\[
\eta= \eta(\theta) = \bigl(I(\theta_j \ne0 )\bigr)_{j=1,\dots,d},
\]
where $I(\cdot)$ is the indicator function.
As estimators of $\eta$, we consider any measurable functions
$\hat\eta=\hat\eta(X_1,\dots,X_n)$ of $(X_1,\dots,X_n)$
taking values in $\{0,1\}^d$. Such estimators will be called \textit{selectors}.
We characterize the loss of {a} selector $\hat\eta$ as {an}
estimator of $\eta$ by the Hamming distance between $\hat\eta$
and $\eta$, that is, by the number of positions at which $\hat
\eta$ and $\eta$ differ:
\[
\llvert \hat\eta-\eta \rrvert \triangleq\sum_{j=1}^d
\llvert \hat\eta_j-\eta_j \rrvert = \sum
_{j=1}^d I(\hat\eta_j\ne
\eta_j).
\]
Here, $\hat\eta_j$ and $ \eta_j=\eta_j(\theta)$ are the $j$th
components of $\hat\eta$ and $ \eta= \eta(\theta)$, respectively.
The expected Hamming loss of a selector $\hat\eta$ is defined as
$\mathbf{E}_\theta \llvert \hat\eta- \eta \rrvert $,
where $\mathbf{E}
_\theta$ denotes the expectation with respect to the distribution
$\mathbf{P}
_\theta$ of $(X_1,\dots,X_n)$ satisfying \eqref{vectormodel}.\vspace*{2pt}
Another well-known risk measure is the probability of wrong recovery
$\mathbf{P}_\theta(\hat S \ne S(\theta))$, where $\hat S= \{j:
\hat\eta_j=1\}$ and $S(\theta)=\{ j:   \eta_j(\theta)=1\}$.
It can be viewed as the Hamming distance with an indicator loss and is
related to the expected Hamming loss as follows:
%
\begin{equation}
\label{risks} \mathbf{P}_\theta\bigl(\hat S \ne S(\theta)\bigr) =
\mathbf{P}_\theta \bigl( \llvert \hat\eta- \eta \rrvert \ge1\bigr)
\le\mathbf{E}_\theta \llvert \hat\eta- \eta \rrvert .
\end{equation}
In view of the last inequality, bounding the expected Hamming loss
provides a stronger result than bounding the probability of wrong recovery.

Most of the literature on variable selection in high dimensions focuses
on the recovery of the sparsity pattern, that is, on constructing
selectors such that the probability $\mathbf{P}_\theta(\hat S \ne
S(\theta))$ is close to 0 in some asymptotic sense (see, e.g., \cite
{MB2006,ZhaoYu2006,Wainwright2009,Lounici2008,WR2009,Zhang2010,MB2010}). These
papers consider high-dimensional linear regression settings with
deterministic or random covariates. In particular, for the sequence
model \eqref{vectormodel}, one gets that if $a>C\sigma\sqrt{\log d}$
for some $C>0$ large enough, then there exist selectors such that
$\mathbf{P}
_\theta(\hat S \ne S(\theta))$ tends to 0, while this is not the
case if $a<c\sigma\sqrt{\log d}$ for some $c>0$ small enough. More
insight into variable selection was provided in \cite{GJW2012,JJ2012}
by considering\vspace*{1pt} a Hamming risk close to the one we have defined above.
Assuming that $s\sim d^{1-\beta}$ for some $\beta\in(0,1)$, the
papers \cite{GJW2012,JJ2012} establish an asymptotic in $d$ ``phase
diagram'' that partitions the parameter space into three regions called the
exact recovery, almost full recovery, and no recovery regions. This is
done in a Bayesian setup for the linear regression model with i.i.d.
Gaussian covariates and random $\theta$. Note also that in \cite
{GJW2012,JJ2012} the knowledge of $\beta$ is required to construct the
selectors, so that in this sense the methods are not adaptive. The
selectors are of the form
$\hat\eta_j = I( \llvert  X_j \rrvert \geq t)$ with threshold
$t=\tau(\beta)\sigma\sqrt{\log d}$ for some function $\tau(\cdot
)>0$. More recently, these asymptotic results were extended to a
combined minimax--Bayes Hamming risk on a certain class of vectors
$\theta$ in \cite{JZZ2014}.

The present paper makes further steps in the analysis of variable
selection with a Hamming loss initiated in \cite{GJW2012,JJ2012}.
Unlike \cite{GJW2012,JJ2012}, we study the sequence model \eqref
{vectormodel} rather than Gaussian regression and analyze the behavior
of the minimax risk rather than that of the Bayes risk with a specific
prior. Furthermore, we consider not only $s\sim d^{1-\beta}$ but
general $s$ and derive nonasymptotic results that are valid for any
sample size. Remarkably, we get an exact expression for the
nonasymptotic minimax risk of separable (coordinate-wise) selectors and find explicitly the separable minimax selectors.
Finally, we construct data-driven selectors that are simultaneously
adaptive to the parameters $a$ and $s$.

Specifically, we consider the minimax risk
%
\begin{equation}
\label{minimaxrisk} \inf_{\tilde\eta} \sup_{\theta\in\Theta}
\frac{1}s \mathbf{E} _\theta \llvert \tilde\eta- \eta \rrvert
\end{equation}
for $\Theta= \Theta_d(s,a)$ and $\Theta= \Theta_d^+(s,a)$, where
$\inf_{\tilde\eta}$ denotes the infimum over all
selectors $\tilde\eta$.
In Section~\ref{sec:minimax}, for both classes $\Theta= \Theta
_d(s,a)$ and $\Theta= \Theta_d^+(s,a)$ we find the upper and lower bounds of
the minimax risks and derive minimax selectors for any fixed $d, s,
a>0$ such that $s < d $. For $\Theta= \Theta_d(s,a)$, we also propose
another selector attaining the lower bound risk up to the factor 2.
Interestingly, the thresholds that correspond to the minimax optimal
selectors do not have the classical form $A\sigma\sqrt{\log d}$ for
some $A>0$; the optimal threshold is a function of $a$ and $s$.
Analogous minimax results are obtained for the risk measured by the
probability of wrong recovery $\mathbf{P}_\theta(\hat S \ne
S(\theta
))$. Section~\ref{sec:general} considers extensions of the
nonasymptotic minimax theorems of Section~\ref{sec:minimax} to
settings with non-Gaussian or dependent observations.
In Section~\ref{sec:asymp}, as asymptotic corollaries of these
results, we establish sharp conditions under which exact and almost
full recovery are achievable. Section~\ref{sec:adapt} is devoted to
the construction of adaptive selectors that achieve almost full and
exact recovery without the knowledge of the parameters $a$ and $s$.
Most of the proofs are given in the Appendix.

Finally, note that quite recently several papers have studied the
expected Hamming loss in other problems of variable selection.
Asymptotic behavior of the minimax risk analogous to \eqref
{minimaxrisk} for classes $\Theta$ different from the sparsity classes
that we consider here was analyzed in \cite{ButStep2015} and
without the normalizing factor $1/s$ in \cite{IngStep2014}. Oracle
inequalities for Hamming risks in the problem of multiple
classification under sparsity constraints are established in \cite
{NR2012}. The paper \cite{ZhangZhou2015} introduces an asymptotically
minimax approach based on the Hamming loss in the problem of community
detection in networks.

\section{Nonasymptotic minimax selectors}\label{sec:minimax}

In what follows, we assume that \mbox{$s < d$}. We first consider minimax
variable selection for the class
$\Theta_d^+(s,a)$. For this class, we will use a selector $\hat\eta
^+$ with the components
%
\begin{equation}
\label{selector+} \hat\eta_j^+ = I(X_j\geq t), \qquad j =1,
\dots,d,
\end{equation}
where the threshold is defined by
%
\begin{equation}
\label{threshold} t = \frac{a}2 + \frac{\sigma^2} a \log \biggl(
\frac{d}s - 1 \biggr).
\end{equation}
Set
\[
\Psi_+(d,s,a)= \biggl( \frac{d}s - 1 \biggr) \Phi \biggl( -
\frac{a}{
2\sigma} - \frac{\sigma} a \log \biggl( \frac{d}s - 1 \biggr)
\biggr) +\Phi \biggl( - \frac{a} {2\sigma} + \frac{\sigma} a \log \biggl(
\frac{d}s - 1 \biggr) \biggr),
\]
where $\Phi(\cdot)$ denotes the standard Gaussian cumulative
distribution function.

\begin{theorem}\label{t2} For any $a>0$ and $s \leq d/2$, the selector
$\hat\eta^+$ in \eqref{selector+}
with the threshold $t$ defined in (\ref{threshold}) satisfies
%
\begin{equation}
\label{t2:eq1} \sup_{\theta\in\Theta_d^+(s,a)} \frac{1}s
\mathbf{E}_\theta \bigl\llvert \hat\eta^+ - \eta \bigr\rrvert \leq
\Psi_+(d,s,a).
\end{equation}
\end{theorem}
The proof is given in the Appendix.

A selector $\tilde\eta=(\tilde\eta_1,\dots,\tilde\eta
_d)$ will be called \textit{separable} if its $j$th component $\tilde
\eta_j$ depends only on $X_j$ for all $j=1,\dots,d$. We denote by
$\mathcal T$ the set of all separable selectors.

The next theorem gives a lower bound on the minimax risk showing that
the upper bound in Theorem~\ref{t2} is tight over separable selectors.

\begin{theorem}\label{thm:lb} For any $a>0$ and $s < d$, we have
\[
\inf_{\tilde\eta \in \mathcal T} \sup_{\theta\in\Theta_d^+(s,a)} \frac{1}s
\mathbf{E}_\theta \llvert \tilde\eta- \eta \rrvert \ge\Psi_+(d,s,a),
\]
where $\inf_{\tilde\eta \in \mathcal T}$ denotes the infimum over all separable selectors
$\tilde\eta$. Moreover, for any $s'$ in $(0,s]$, we have
\[
\inf_{\tilde\eta } \sup_{\theta\in\Theta_d^+(s,a)} \frac{1}s
\mathbf{E}_\theta \llvert \tilde\eta- \eta \rrvert \ge \frac{s'}{s} \Psi_+(d,s,a) - \frac{4s'}{s} \exp \left( - \frac{(s-s')^2}{2s}\right),
\]
where $\inf_{\tilde\eta }$ denotes the infimum over all selectors
$\tilde\eta$.
\end{theorem}

The proof of the first inequality of Theorem~\ref{thm:lb} is given in the Appendix, while the proof of the second inequality is given in the 
Supplementary material \cite{supp}.

As a straightforward corollary of Theorems \ref{t2} and \ref{thm:lb},
we obtain that the estimator $\hat\eta^+$ is minimax among the separable selectors in the exact
sense for the class $\Theta_d^+(s,a)$ and the minimax risk satisfies
%
\begin{equation}
\label{eq:minimax:central} \inf_{\tilde\eta \in \mathcal T} \sup_{\theta\in\Theta_d^+(s,a)}
\frac{1}s \mathbf{E}_\theta \llvert \tilde\eta- \eta \rrvert =
\sup_{\theta\in\Theta_d^+(s,a)} \frac{1}s \mathbf{E}_\theta \bigl
\llvert \hat\eta^+ - \eta \bigr\rrvert = \Psi _+(d,s,a).
\end{equation}
Remarkably, this holds under no assumptions on $d,s,a$ except for, of
course, some minimal conditions under which the problem ever makes
sense: $a>0$ and $s \leq d/2$. Analogous non-asymptotic minimax result is
valid for the class
\begin{eqnarray*}
\Theta_d^-(s,a) & = & \bigl\{\theta\in\mathbb{R}^d:
\mbox{ there exists a set } S\subseteq\{1,\dots,d\} \mbox{ with at most } s \mbox{
elements }
\\
&&  \mbox{such that } \theta_j \leq- a \mbox{ for all } j
\in S, \mbox{ and } \theta_j=0 \mbox{ for all } j \notin S \bigr\}.
\end{eqnarray*}
We omit details here.

Next, consider the class $\Theta_d(s,a)$. A direct analog of $\hat
\eta^+$ for $\Theta_d(s,a)$ is a selector
$\hat\eta$ with the components
%
\begin{equation}
\label{selector} \hat\eta_j = I\bigl( \llvert X_j \rrvert
\geq t\bigr), \qquad j =1,\dots,d,
\end{equation}
where the threshold $t$ is defined in \eqref{threshold}.
Set
\[
\begin{aligned}
\Psi(d,s,a)&= \biggl( \frac{d}s - 1 \biggr) \Phi \biggl( -
\frac{a}{2\sigma} - \frac{\sigma} a \log \biggl( \frac{d}s - 1 \biggr)
\biggr) \\
&\quad {}+\Phi \biggl( - \biggl(\frac{a}{2\sigma} - \frac{\sigma} a \log \biggl(
\frac{d}s - 1 \biggr) \biggr)_+ \biggr),
\end{aligned}
\]
where $x_+=\max(x,0)$. Note that
%
\begin{equation}
\label{eq_psi} \Psi(d,s,a) \le\Psi_+(d,s,a).
\end{equation}
We have the following bound.

%

\begin{theorem}\label{nonasymptotic}
For any $a>0$ and $s \leq d/2$, the selector $\hat\eta$ in (\ref
{selector}) with the threshold $t$ defined in (\ref{threshold}) satisfies
%
\begin{equation}
\label{t1:eq1} \sup_{\theta\in\Theta_d(s,a)} \frac{1}s
\mathbf{E}_\theta \llvert \hat \eta- \eta \rrvert \leq2 \Psi(d,s,a).
\end{equation}
\end{theorem}
The proof is given in the Appendix.

For the minimax risk on the class $\Theta_d(s,a)$, we have the
following corollary, which is an immediate consequence of Theorems \ref
{thm:lb}, \ref{nonasymptotic} and
inequality \eqref{eq_psi}.

\begin{corollary}\label{cor:factor2}
For any $a>0$ and $s \leq d/2$, the selector $\hat\eta$ in (\ref
{selector}) with the threshold $t$ defined in (\ref{threshold}) satisfies
%
\begin{equation}
\label{cor1:eq1} \sup_{\theta\in\Theta_d(s,a)} \mathbf{E}_\theta \llvert
\hat \eta- \eta \rrvert \leq2 \inf_{\tilde\eta \in \mathcal{T}} \sup
_{\theta\in
\Theta_d(s,a)} \mathbf{E}_\theta \llvert \tilde\eta- \eta
\rrvert .
\end{equation}
\end{corollary}

Thus, the risk of the thresholding estimator (\ref{selector}) cannot
be greater than
the minimax risk of separable selectors over the class $\Theta_d(s,a)$ multiplied by 2.

We turn now to exact minimax variable selection over the class $\Theta
_d(s,a)$. Consider a selector $\bar\eta=(\bar\eta_1,\dots,\bar\eta_d)$ with the components
%
\begin{equation}
\label{selector2} \bar\eta_j = I \biggl(\log \biggl(\cosh \biggl(
\frac{a
X_j}{\sigma^2} \biggr) \biggr) \geq t \biggr), \qquad j =1,\dots,d,
\end{equation}
where the threshold is defined by
%
\begin{equation}
\label{threshold2} t = \frac{a^2}{2 \sigma^2} + \log \biggl( \frac{d}s - 1
\biggr).
\end{equation}
Set
\begin{eqnarray*}
\bar\Psi(d,s,a)&=& \biggl(\frac{d}s - 1 \biggr) \mathbf{P} \biggl(
e^{-
\frac{a^2}{2\sigma^2} }\cosh \biggl(\frac{a\xi}{\sigma} \biggr) \geq\frac{d}s - 1
\biggr)
\\
&&{}+ \mathbf{P} \biggl( e^{- \frac{a^2}{2\sigma^2} }\cosh \biggl(\frac
{a\xi }{\sigma}+
\frac{a^2}{\sigma^2} \biggr) < \frac{d}s - 1 \biggr),
\end{eqnarray*}
where $\xi$ denotes a standard Gaussian random variable.
Our aim is to show that
$\bar\Psi(d,s,a)$ is the minimax risk of variable selection
under the Hamming loss over the class $\Theta_d(s,a)$ and that it is
nearly achieved by the selector in \eqref{selector2}. We first prove that
$\bar\Psi(d,s,a) d/(d-s)$ is an upper bound on the maximum risk of the selector \eqref{selector2}.

\begin{theorem}\label{t1}
For any $a>0$ and $s < d$, the selector $\bar\eta$ in (\ref
{selector2}) with the threshold $t$ defined in (\ref{threshold2}) satisfies
\begin{equation*}
\sup_{\theta\in\Theta_d(s,a)} \frac{1}s \mathbf{E}_\theta
\llvert \bar\eta- \eta \rrvert \leq\bar\Psi(d,s,a) \frac d{d-s}.
\end{equation*}
\end{theorem}

The next theorem establishes the lower bound over separable selectors on the minimax risk associated to the upper bound in Theorem~\ref{t1}.

\begin{theorem}\label{thm:lb2} For any $a>0$ and $s < d$, we have
\[
\inf_{\tilde\eta \in \mathcal T} \sup_{\theta\in\Theta_d(s,a)} \frac{1}s
\mathbf{E} _\theta \llvert \tilde\eta- \eta \rrvert \ge\bar\Psi(d,s,a),
\]
where $\inf_{\tilde\eta \in \mathcal T}$ denotes the infimum over all separable selectors
$\tilde\eta$.
\end{theorem}

%
%
%
%

%

Finally, we show how the above nonasymptotic minimax results can be
extended to the probability of wrong recovery. For any selector
$\tilde\eta$, we denote by $ S_{\tilde\eta}$ the selected
set of indices: $S_{\tilde\eta} = \{j:  \tilde\eta_j=1\}$.

\begin{theorem}\label{cor:recovery_pattern}
For any $a>0$ and $s \leq d/2$, the selectors $\hat\eta$ in \eqref
{selector} and $\hat\eta^+$ in \eqref{selector+} with the threshold
$t$ defined in \eqref{threshold}, and the selector $\bar\eta$
in (\ref{selector2}) with the threshold $t$ defined in (\ref
{threshold2}) satisfy
%
\begin{align}
\label{cor:recovery_pattern:eq1} \sup_{\theta\in\Theta_d^+(s,a)} \mathbf{P}_\theta\bigl(
S_{\hat\eta
^+} \ne S(\theta)\bigr) &\le s\Psi_+(d,s,a),
\\
\label{cor:recovery_pattern:eqbar2} \sup_{\theta\in\Theta_d(s,a)} \mathbf{P}_\theta\bigl(
S_{\bar\eta} \ne S(\theta)\bigr) &\le s \bar\Psi(d,s,a) \frac d{d-s}
\end{align}
and
%
\begin{equation}
\label{cor:recovery_pattern:eq2} \sup_{\theta\in\Theta_d(s,a)} \mathbf{P}_\theta\bigl(
S_{\hat\eta} \ne S(\theta)\bigr) \le2s\Psi(d,s,a).
\end{equation}
Furthermore,
%
\begin{equation}
\label{cor:recovery_pattern:eq3} \inf_{\tilde\eta\in\mathcal T} \sup_{\theta\in\Theta
_d^+(s,a)}
\mathbf{P}_\theta\bigl(S_{\tilde\eta} \ne S(\theta)\bigr) \ge
\frac
{s\Psi_+(d,s,a)}{1+s\Psi_+(d,s,a)}
\end{equation}
and
%
\begin{equation}
\label{cor:recovery_pattern:eqbar3} \inf_{\tilde\eta\in\mathcal T} \sup_{\theta\in\Theta
_d(s,a)}
\mathbf{P}_\theta\bigl(S_{\tilde\eta} \ne S(\theta)\bigr) \ge
\frac{s
\bar\Psi(d,s,a)}{1+s \bar\Psi(d,s,a)}.
\end{equation}
\end{theorem}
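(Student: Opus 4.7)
The statement has two types of bounds: upper bounds on $\Pb_\theta(S_{\widehat\eta}\ne S(\theta))$ for the three explicit selectors, and lower bounds valid over the class $\mathcal{T}$ of separable selectors. The upper bounds fall out immediately from the earlier theorems, while the lower bounds require a short independence + elementary product-inequality argument.

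For the three upper bounds, I would simply chain the Markov-type inequality already recorded in \eqref{risks},
\[
\Pb_\theta(S_{\widehat\eta}\ne S(\theta))=\Pb_\theta(|\widehat\eta-\eta|\ge 1)\le \E_\theta|\widehat\eta-\eta|,
\]
with Theorem~\ref{t2} (giving $\E_\theta|\widehat\eta^+-\eta|\le s\Psi_+(d,s,a)$ uniformly over $\Theta_d^+(s,a)$), with Theorem~\ref{t1} (for $\overline\eta$ on $\Theta_d(s,a)$), and with Theorem~\ref{nonasymptotic} (for $\widehat\eta$ on $\Theta_d(s,a)$, producing the extra factor $2$). This yields \eqref{cor:recovery_pattern:eq1}, \eqref{cor:recovery_pattern:eqbar2} and \eqref{cor:recovery_pattern:eq2} without any further work.

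For the lower bounds \eqref{cor:recovery_pattern:eq3} and \eqref{cor:recovery_pattern:eqbar3}, the key observation is that if $\widetilde\eta\in\mathcal{T}$, then $\widetilde\eta_j$ depends only on $X_j$ while $\eta_j(\theta)$ is a deterministic function of $\theta$; under $\Pb_\theta$ the components $X_1,\dots,X_d$ are independent, so the events $\{\widetilde\eta_j\ne\eta_j\}$ are mutually independent. Writing $p_j=\Pb_\theta(\widetilde\eta_j\ne\eta_j)$, this gives
\[
\Pb_\theta(S_{\widetilde\eta}\ne S(\theta))=1-\prod_{j=1}^d(1-p_j).
\]
I would then invoke the elementary inequality $\prod_j(1-p_j)\le 1/(1+\sum_j p_j)$, which follows from $(1-p_j)^{-1}\ge 1+p_j$ combined with $\prod_j(1+p_j)\ge 1+\sum_j p_j$. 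Hence
\[
\Pb_\theta(S_{\widetilde\eta}\ne S(\theta))\ge \frac{\sum_j p_j}{1+\sum_j p_j}=\frac{\E_\theta|\widetilde\eta-\eta|}{1+\E_\theta|\widetilde\eta-\eta|}.
\]

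Since $x\mapsto x/(1+x)$ is continuous and increasing on $[0,\infty)$, taking $\sup_\theta$ on both sides yields $\sup_\theta\Pb_\theta(S_{\widetilde\eta}\ne S(\theta))\ge R(\widetilde\eta)/(1+R(\widetilde\eta))$ with $R(\widetilde\eta)=\sup_\theta\E_\theta|\widetilde\eta-\eta|$. Taking $\inf_{\widetilde\eta\in\mathcal{T}}$ and using monotonicity once more reduces matters to lower-bounding $\inf_{\widetilde\eta\in\mathcal{T}}R(\widetilde\eta)$. Since the infimum over $\mathcal{T}$ dominates the infimum over all selectors, Theorem~\ref{thm:lb} gives $\inf_{\widetilde\eta\in\mathcal{T}}\sup_{\theta\in\Theta_d^+(s,a)}\E_\theta|\widetilde\eta-\eta|\ge s\Psi_+(d,s,a)$, and Theorem~\ref{thm:lb2} gives the analogous bound with $\overline\Psi$ on $\Theta_d(s,a)$. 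Plugging these in produces precisely \eqref{cor:recovery_pattern:eq3} and \eqref{cor:recovery_pattern:eqbar3}. There is no real obstacle here; the only subtlety is noticing that separability is exactly what is needed to convert a sum of error probabilities into a probability of union via independence, and to justify pulling the monotone transformation $x/(1+x)$ through $\sup$ and $\inf$.
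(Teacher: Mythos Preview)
Your proof is correct, and for the lower bounds it takes a genuinely different route from the paper's.

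For the upper bounds you and the paper argue identically: combine \eqref{risks} with Theorems~\ref{t2}, \ref{t1}, and~\ref{nonasymptotic}.

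For the lower bounds, the paper does not use your pointwise product inequality. Instead, it introduces $p_* = \min_{\theta\in\Theta'}\prod_j \Pb_\theta(\widetilde\eta_j=\eta_j)$ and establishes two separate lower bounds on $\sup_\theta \Pb_\theta(S_{\widetilde\eta}\ne S(\theta))$: the trivial one $1-p_*$, and a second one obtained by writing $\Pb_\theta(|\widetilde\eta-\eta|=1)$ as a sum over disjoint events $B_j=\{\widetilde\eta_j\ne\eta_j,\ \widetilde\eta_i=\eta_i\ \forall i\ne j\}$, bounding each $\prod_{i\ne j}p_i(\theta)\ge p_*$, and then re-running the Bayesian averaging from the proof of Theorem~\ref{thm:lb} to get $\ge p_*\, s\Psi_+$. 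The final bound comes from $\min_{0\le p_*\le 1}\max\{1-p_*,\,p_*\,s\Psi_+\}=s\Psi_+/(1+s\Psi_+)$. Your argument is shorter and more direct: the inequality $\prod_j(1-p_j)\le 1/(1+\sum_j p_j)$ immediately yields the pointwise relation $\Pb_\theta(S_{\widetilde\eta}\ne S(\theta))\ge \E_\theta|\widetilde\eta-\eta|/(1+\E_\theta|\widetilde\eta-\eta|)$, after which you can invoke Theorems~\ref{thm:lb} and~\ref{thm:lb2} as black boxes rather than reopening their proofs. The paper's approach, by contrast, makes the balancing mechanism between the two regimes explicit but at the cost of repeating part of the earlier Bayes computation.
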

The proof is given in the Appendix.

Although Theorem~\ref{cor:recovery_pattern} does not provide the exact
minimax solution, it implies sharp minimaxity in asymptotic sense.
Indeed, an interesting case is when the minimax risk in Theorem~\ref
{cor:recovery_pattern} goes to 0 as $d\to\infty$. Assuming that $s$
and $a$ are functions of $d$, this corresponds to $s\Psi_+(d,s,a)\to
0$ as $d\to\infty$. In this natural asymptotic setup, the upper and
lower bounds of Theorem~\ref{cor:recovery_pattern} for the class $
\Theta_d^+(s,a)$ are sharp. The same for the class $ \Theta_d(s,a)$, if $s$ and $a$ are such that $s \bar\Psi(d,s,a) \to 0$ and that $s/d\to 0$. We discuss this
issue in Section~\ref{sec:asymp}; cf. Theorem~\ref{th:sharp:asymp}.

\begin{remark} Papers \cite{GJW2012,JJ2012,JZZ2014} use a
different Hamming loss defined in terms of vectors of signs. In our setting,
this would mean considering not $ \llvert \hat\eta- \eta \rrvert $ but the following loss:
$
\sum_{j=1}^d I( \operatorname{sign}(\hat\theta_j)\ne\operatorname{sign}(\theta_j))$,
where $\hat\theta_j$ is an estimator of $\theta_j$ and $\operatorname{sign}(x)=I(x>0)-I(x<0)$. Theorems of this section are easily adapted to
such a loss, but in this case the corresponding expressions for the
nonasymptotic risk contain additional terms and we do not obtain exact
minimax solutions as above. On the other hand, these additional terms
are smaller than $\Psi(d,s,a)$ and $\Psi_+(d,s,a)$, and in the
asymptotic analysis, such as the one performed in Sections~\ref{sec:asymp} and \ref{sec:adapt}, can often be
neglected.
Thus, in many cases, one gets the same asymptotic results for both
losses. We do not discuss this issue in more detail here.
\end{remark}

\section{Generalizations and extensions}\label{sec:general}
Before proceeding to asymptotic corollaries, we discuss some
generalizations and extensions of the nonasymptotic results of Section~\ref{sec:minimax}.
\subsection{Dependent observations}
It is easy to see that Theorems \ref{t2} and \ref{nonasymptotic} do
not use any information on the dependence between the observations, and
thus remain valid for dependent $X_j$. Furthermore, a minimax
optimality property within the class of separable selectors holds under dependence as well. To be specific,
denote by $\mathcal{N}_d(\theta,\Sigma)$ the $d$-dimensional
Gaussian distribution with mean $\theta$ and covariance matrix $\Sigma
$. Assume that the distribution $\mathbf{P}$ of $(X_1,\dots,X_d)$
belongs to
the class
\[
{\mathcal P}_d^+\bigl(s,a, \sigma^2\bigr) = \bigl\{
\mathcal{N}_d(\theta,\Sigma): \theta\in\Theta_d^+(s,a),
\sigma_{ii}=\sigma^2, \mbox{ for all }i = 1,\dots,d\bigr\},
\]
where we denote by $\sigma_{ii}$ the diagonal entries of $\Sigma$.
Note that, for distributions in this class, $\Sigma$ can be any
covariance matrix with constant diagonal elements.
%
\begin{theorem}\label{thm:dependence} For any $a>0$ and $s \leq d/2$, and
for the selector $\hat\eta^+$ in \eqref{selector+}
with the threshold $t$ defined in (\ref{threshold}) we have
\[
\inf_{\tilde\eta \in \mathcal{T}} \sup_{\mathbf{P}\in{\mathcal
P}_d^+(s,a,\sigma
^2)} \mathbf{E}_\mathbf{P}
\llvert \tilde\eta- \eta \rrvert = \sup_{\mathbf{P}\in{\mathcal P}_d^+(s,a,\sigma^2)}
\mathbf{E}_\mathbf{P} \bigl\llvert \hat\eta^+ - \eta \bigr\rrvert = s
\Psi_+(d,s,a),
\]
where $\inf_{\tilde\eta \in \mathcal{T}}$ denotes the infimum over all separable selectors $\tilde\eta$, and $\mathbf{E}_\mathbf{P}$ denotes the expectation with respect to $\mathbf{P}$.
Moreover, for any $s'$ in $(0,s]$, we have
\[
\inf_{\tilde\eta } \sup_{\mathbf{P}\in{\mathcal
P}_d^+(s,a,\sigma
^2)} \mathbf{E}_\mathbf{P}
\llvert \tilde\eta- \eta \rrvert \ge {s'} \Psi_+(d,s,a) - 4 {s'} \exp \left( - \frac{(s-s')^2}{2s}\right),
\]
where $\inf_{\tilde\eta}$ denotes the infimum over all selectors
$\tilde\eta$.
\end{theorem}
\begin{proof} The upper bound $\Psi_+(d,s,a)$ on the minimax risk follows
from the fact that the proofs of Theorems \ref{t2} and \ref
{nonasymptotic} are not affected by the dependence. Indeed, both the
selector and the Hamming loss proceed coordinate-wisely. The lower
bound on the minimax risk follows from Theorem~\ref
{thm:lb} after observing that the maximum over ${\mathcal
P}_d^+(s,a,\sigma^2)$ is greater than the maximum over the subfamily
of Gaussian vectors with independent entries $\{\mathcal{N}_d(\theta,
\sigma^2 I_d):\theta\in\Theta_d^+(s,a)\}$, where $I_d$ is the
$d\times d$ identity matrix.
\end{proof}
An interesting consequence of Theorem~\ref{thm:dependence} and of
\eqref{eq:minimax:central} is that the model with independent $X_j$ is
the least favorable model, in the exact nonasymptotic sense, for the
problem of variable selection with Hamming loss on the class of vectors
$\Theta_d^+(s,a)$.

This fact was also noticed and discussed in \cite{HallJin2010} for the
detection problem. That paper considers the Gaussian model with
covariance matrix $\Sigma$ that is not necessarily a diagonal matrix.
It is shown that faster detection rates are achieved in the case of
dependent observations (under some assumptions) than in the case of
independent data. It would be interesting to extend these results to
the variable selection problem in hand.

\subsection{Non-Gaussian models}
As a building block for extension to non-Gaussian observations, we
first consider the following simple model. We observe independent
random variables $X_1,\dots,X_d$ with values in a measurable space
$({\mathcal X},{\mathcal U})$ such that at most $s$ among them are distributed
according to the probability distribution $P_1$ and the other are distributed according to the probability distribution
$P
_0$. We assume that $P_0\ne P_1$. Let $f_0$ and
$f_1$ be
densities of $P_0$ and $P_1$ with respect to some dominating
measure. Denote by $\eta=(\eta_1,\dots,\eta_d)$ the vector such
that $\eta_j=1 $ if the distribution of $X_j$ is $P_1$ and
$\eta
_j=0$ if it is $P_0$. Define $\Theta_d(s)$ as the set of all vectors $\eta \in \{0,1\}^{d}$ with at most $s$ non-zero components. For any fixed $\eta$, we denote by $\mathbf{E}_{\eta}$ the expectation with respect to the distribution of $(X_{1},\dots,X_{d})$. Consider the selector $\hat\eta=(\hat\eta
_1,\dots,\hat\eta_d)$, where
%
\begin{equation}
\label{def:newsel} \hat\eta_j = I \biggl(s f_1(X_j)
\geq (d-s ) f_0(X_j) \biggr), \qquad j=1,\dots,d.
\end{equation}

\begin{theorem}\label{thm:fixed}
For any $s<d$, the selector $\hat\eta$ in (\ref{def:newsel}) satisfies
%
\begin{equation*}
\sup_{\eta\in\Theta_d(s)} \mathbf{E}_{\eta} \frac 1s \llvert \hat\eta - \eta
\rrvert \leq \Psi(d,s) \frac d{d-s},
\end{equation*}
and, for any $s'$ in $(0,s]$,
\begin{equation}
\label{thm:fixed:eq1} \inf_{\tilde\eta} \sup_{\eta\in\Theta_d(s )} \frac 1s \mathbf{E}_{\eta}
\llvert \tilde\eta- \eta \rrvert \geq \frac{s'}s \Psi(d,s) - \frac{4 s'}s \exp \left(-\frac{(s-s')^2}{2s} \right),
\end{equation}
where $ \inf_{\tilde{\eta}}$ denotes the infimum over all
selectors, and
\begin{equation}
\label{psi}
\begin{aligned}
\Psi = \Psi(d,s) &= P_1 \biggl(s f_1(X_1)
< (d-s ) f_0(X_1)\biggr)\\
&\quad {} + \biggl(\frac{d}s -
1 \biggr) P_0 \biggl( s f_1(X_1)
\geq (d-s ) f_0(X_1) \biggr).
\end{aligned}
\end{equation}
\end{theorem}
%
%
%
The proof is given in the Supplementary material \cite{supp}.

Suppose now that instead of two measures $P_0$ and $P_1$ we have
a parametric family of probability measures $\{\mathbb{P}_a, a\in
\mathcal
{U}\}$ where $\mathcal{U}\subseteq{\mathbb R}$. Let $\mathsf{ f}_a$ be a
density of $\mathbb{P}_a$ with respect to some dominating measure. Recall
that the family $\{\mathsf{ f}_a, a\in\mathcal{U}\}$ is said to have the
Monotone Likelihood Ratio (MLR) property if, for all $a_0,a_1$ in
$\mathcal{U}$ such that $a_0 < a_1$, the log-likelihood ratio $
\log({\mathsf{ f}_{a_1}(X) }/{\mathsf{ f}_{a_0}(X) } )
$
is an increasing function of $X$. In particular, this implies (cf.
\cite{lehmann}, {Lemma~3.4.2}) that $\{\mathsf{ f}_a,   a \in\mathcal
{U}\}$ is a stochastically ordered family, that is,
%
\begin{equation}
\label{order} F_{a} (x) \geq F_{a'} (x) \qquad\mbox{for all
} x\mbox{ if } a < a',
\end{equation}
where $F_a$ is the cumulative distribution function corresponding to
$\mathsf{ f}_a$.
Using these facts, we generalize the nonasymptotic results of the
previous section in two ways. First, we allow for not necessarily
Gaussian distributions and second, instead of the set of parameters
$\Theta_d^+(s,a)$, we consider
the following set with two restrictions:
\begin{eqnarray*}
\Theta_d^+ (s,a_0,a_1) &=& \bigl\{ \theta
\in\mathbb{R}^d: \exists \mbox{ a set }S\subseteq\{1,\dots,d\}
\mbox{ with at most $s$ elements}
\\
&&  \mbox{ such that $\theta_j \geq a_1$ for
all $j \in S$, and $\theta_j \leq a_0$ for all } j
\notin S \bigr\},
\end{eqnarray*}
where $a_0<a_1$. We assume that $X_{j}$ is distributed with density ${\sf f}_{\theta_{j}}$ for $j=1,\dots,d$, and $X_{1},\dots,X_{d}$ are independent. In the next theorem, $\mathbf{E}_{\theta}$ is the expectation with respect to the distribution of such $X_{1},\dots,X_{d}$. In what follows, we use the notation $f_j = \mathsf{f}_{a_j}, j=0,1$.
%
\begin{theorem}\label{thm:st} Let $\{\mathsf{ f}_a,   a \in\mathcal{U}\}
$ be a family with the MLR property, and let $a_0,a_1\in\mathcal{U}$
be such that $a_0 < a_1$.
Set $f_0 = \mathsf{ f}_{a_0}$ and $f_1= \mathsf{ f}_{a_1}$, then, for any $s<d$, the selector $\hat\eta$ in (\ref{def:newsel}) satisfies
\[
\sup_{\theta\in\Theta_d^+(s, a_0,a_1)} \frac 1s \mathbf{E}_\theta \llvert \hat \eta- \eta
\rrvert \leq \Psi(d,s) \frac d{d-s},
\]
and, for any $s'$ in $(0,s]$,
\[
\inf_{\tilde\eta} \sup_{\theta\in
\Theta_d^+(s, a_0,a_1)} \frac 1s
\mathbf{E}_\theta \llvert \tilde\eta- \eta \rrvert \geq \frac{s'}s\Psi(d,s) - \frac{4s'}s \exp \left(-\frac{(s-s')^2}{2s} \right),
\]
where $ \inf_{\tilde{\eta}}$ denotes the infimum over all selectors
and $\Psi$ is given in (\ref{psi}).
\end{theorem}

%
%
%
%
%
The proof is given in the Supplementary Material \cite{supp}.
\begin{example}\label{exmp1}
Let $\mathsf{ f}_a $ be the Gaussian ${\mathcal N}(a, \sigma^2)$ density
with some $\sigma^2>0$, and let $a_0<a_1$. For $f_1= \mathsf{ f}_{a_1}$
and $f_0 = \mathsf{ f}_{a_0}$, the log-likelihood ratio
\[
\log\frac{f_1}{f_0}(X) = X \frac{a_1-a_0}{\sigma^2}- \frac
{a_1^2 - a_0^2}{2 \sigma^2}
\]
is increasing in $X$. By Theorem~\ref{thm:st}, the
selector $\hat\eta$ on the class $\Theta_d^+(s, a_0,a_1)$ is a
vector with components
%
\begin{equation}
\label{eta01} \hat\eta_j = I \bigl( X_j \geq
t(a_0,a_1) \bigr),\qquad  j=1,\dots,d,
\end{equation}
where
\[
t(a_0,a_1) =\frac{a_1+a_0}2 + \frac{\sigma^2 \log(d/s-1)}{a_1-a_0}.
\]
Note that for $a_0=0$ it coincides with the selector in \eqref
{selector+} with $a=a_1$, which is minimax optimal on $\Theta_d^+(s,
a_1)$. Moreover, the minimax risk only depends on $a_0$ and $a_1$
through the difference $\delta=a_1-a_0$:
\[
\Psi= \Phi \biggl(-\frac{\delta}2 + \frac{\sigma^2 \log
(d/s-1)}{\delta} \biggr)+ \biggl(
\frac{d}s - 1 \biggr) \Phi \biggl(-\frac
{\delta}2 + \frac{\sigma^2 \log(d/s-1)}{\delta}
\biggr).
\]
\end{example}

\begin{example}\label{exmp2}
Let $\mathbb{P}_a$ be the Bernoulli distribution $B(a)$ with parameter
$a \in
(0,1)$, and $0< a_0 < a_1 <1$. Denoting by $\mathsf{ f}_a$ the density of
$\mathbb{P}_a$ with respect to the counting measure we have, for $f_1=
\mathsf{f}_{a_1}$ and $f_0 = \mathsf{ f}_{a_0}$,
\[
\log\frac{f_1}{f_0}(X) = X \log \biggl(\frac{a_1}{1-a_1} \frac
{1-a_0}{a_0}
\biggr) + \log\frac{1-a_1}{1-a_0}
\]
which is increasing in $X$ for $0< a_0 < a_1 <1$. The nearly minimax optimal
selector $\hat\eta$ on the class $\Theta_d^+(s, a_0,a_1)$ is a
vector with components
$\hat\eta_j$ in \eqref{eta01}
where the threshold $t(a_0,a_1)$ is given by
\[
t(a_0,a_1)= \frac{ \log( \frac{d}s-1) - \log\frac{1-a_1}{1-a_0}}{\log(\frac{a_1}{1-a_1} \frac{1-a_0}{a_0})} .
\]
Note that the nearly minimax selector $\hat\eta_j$ differs from the naive
selector $\hat\eta_j^{n}= X_j$. Indeed since $X_j\in\{0,1\}$ we have
$\hat\eta_j=1$ if either $X_j=1$ or $t(a_0,a_1)\le0$, and $\hat\eta
_j=0$ if either $X_j=0$ or $t(a_0,a_1)>1$.
The value $\Psi$ in the risk has the form
\begin{eqnarray*}
\Psi&=& \mathbb{P}_{a_1}\bigl(X_{1} < t(a_0,a_1)
\bigr) + \biggl(\frac{d}s - 1 \biggr) \mathbb{P} _{a_0}\bigl(X_{1}
\ge t(a_0,a_1)\bigr)
\\
&=& \begin{cases} d/s - 1, & t(a_0,a_1) \le0,
\\
1- a_1 + a_0 ( d/s - 1), & 0 < t(a_0,a_1)
<1,
\\
1, & t(a_0,a_1) \geq1. \end{cases}
\end{eqnarray*}
In the asymptotic regime when $d\to\infty$ and $s\to\infty$, the
minimax risk is of order $s\Psi$ and can converge to 0 only when
the parameters $d,s, a_0,a_1$ are kept such that $0 < t(a_0,a_1) <1$,
and in addition $(1-a_1) s\to0$, $a_0(d-s) \to0$.
Thus, the risk can converge to 0 only when the Bernoulli probabilities
$a_1$ and $a_0$ tend sufficiently fast to 1 and to 0, respectively.
\end{example}

\begin{example}\label{exmp3}
Let $\mathbb{P}_a$ be the Poisson distribution with parameter $a>0$,
and let
$a_1>a_0 >0$. Denoting by $\mathsf{ f}_a$ the density of $\mathbb{P}_a$ with
respect to the counting measure we have
\[
\log\frac{f_1}{f_0}(X) = X\log \biggl(\frac{a_1}{a_0} \biggr)
-a_1+a_0,
\]
which is increasing in $X$. The components of the nearly minimax optimal
selector $\hat\eta$ are given by \eqref{eta01} with
\[
t(a_0,a_1)=\frac{\log( d/s - 1) +a_1 - a_0}{\log(a_1/a_0)}.
\]
Note that $t(a_0,a_1)>0$ as soon as $d/s \geq2$ and $a_1>a_0 >0$. The value of
$\Psi$ in the risk has the form
$
\Psi= \mathbb{P}_{a_1}(X_{1} < t(a_0,a_1)) +(d/s - 1) \mathbb{P}_{a_0}(X_{1}
\ge t(a_0,a_1))$.
\end{example}

\subsection{Crowdsourcing with sparsity constraint} The problem of
crowdsourcing with two classes is a clustering problem that can be
formalized as follows; cf. \cite{GLZ}. Assume that $m$ workers provide
class assignments for $d$ items. The class assignment $X_{ij}$ of the
$i$th worker for the $j$th item is assumed to have a Bernoulli
distribution $B(a_{i0})$ if the $j$th item belongs to class 0, and a
Bernoulli distribution $B(a_{i1})$ if it belongs to class 1. Here,
$a_{i0},a_{i1}\in(0,1)$ and $a_{i0} \ne a_{i1}$ for $i=1,\dots,m$.
The observations $(X_{ij},i=1,\dots,m, j=1,\dots,d)$ are assumed to
be jointly independent. Thus, each vector $X_j=(X_{1j},\dots,X_{mj})$
is distributed according to $P_{0}$ or to $P_{1}$
where each of
these two measures is a product of Bernoulli measures, and $P_{0}\ne
P_{1}$. We assume that there are at most $s$ vectors $X_j$ with distribution
$P_{1}$, and the other vectors $X_j$ with distribution $P_{0}$. The
aim is to recover the binary vector of class labels $\eta=(\eta
_1,\dots,\eta_d)$ based on the observations ${\mathbf X}=(X_1,\dots,X_d)$. Here, $\eta_j\in\{0,1\} $ satisfies $\eta_j=k$ if the $j$th
item belongs to class $k\in\{0,1\}$. Thus, we are in the framework of
Theorem~\ref{thm:fixed} with a particular form of the log-likelihood ratio
%
\begin{equation}
\label{cor:crowdsourcing:eq1} \log\frac{f_1}{f_0}(X_{j}) = \sum
_{i=1}^m \biggl(X_{ij} \log \biggl(
\frac{a_{i1}}{1-a_{i1}} \frac{1-a_{i0}}{a_{i0}} \biggr) + \log\frac
{1-a_{i1}}{1-a_{i0}} \biggr),
\end{equation}
where $f_k$ is the density of $P_{k}$, $k\in\{0,1\}$. The following
corollary is an immediate consequence of Theorem~\ref{thm:fixed}.
%
\begin{corollary}\label{cor:crowdsourcing}
Let $s<d$, $a_{i0},a_{i1}\in(0,1)$\vspace*{1pt} and $a_{i0} \ne a_{i1}$ for
$i=1,\dots,m$. Then the selector $\hat\eta$ in (\ref{def:newsel})
with $\log\frac{f_1}{f_0}(X_{j})$ defined in \eqref
{cor:crowdsourcing:eq1} satisfies Theorem~\ref{thm:fixed}. 
\end{corollary}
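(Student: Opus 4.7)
The plan is to recognize that this corollary is a direct specialization of Theorem~\ref{thm:fixed} to a particular pair of product measures, so the work reduces to verifying that the crowdsourcing setup fits the abstract framework of that theorem and that the log-likelihood ratio computation gives precisely \eqref{cor:crowdsourcing:eq1}.

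First I would set the measurable space in Theorem~\ref{thm:fixed} to be $\mathcal{X} = \{0,1\}^m$ with $\mathcal{U} = 2^{\mathcal{X}}$, and take the dominating measure to be the counting measure on $\mathcal{X}$. The observations $X_1,\dots,X_d$ are then $\mathcal{X}$-valued vectors, and by the joint independence assumption on $(X_{ij})$ they are jointly independent. For each $j$, the conditional distribution of $X_j$ given that item $j$ belongs to class $k\in\{0,1\}$ is the product measure $\Pb_k = \bigotimes_{i=1}^m B(a_{ik})$, with density
\[
f_k(x_1,\dots,x_m) = \prod_{i=1}^m a_{ik}^{x_i}(1-a_{ik})^{1-x_i}.
\]
The hypothesis $a_{i0}\ne a_{i1}$ for every $i$ ensures $\Pb_0\ne \Pb_1$ (in fact each marginal differs), so the non-degeneracy assumption of Theorem~\ref{thm:fixed} is met.

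Next I would compute $\log(f_1/f_0)(X_j)$ using the product structure, obtaining term by term
\[
\log\frac{f_1}{f_0}(X_j) = \sum_{i=1}^m \log\frac{a_{i1}^{X_{ij}}(1-a_{i1})^{1-X_{ij}}}{a_{i0}^{X_{ij}}(1-a_{i0})^{1-X_{ij}}}
=\sum_{i=1}^m\left(X_{ij}\log\!\Bigl(\tfrac{a_{i1}}{1-a_{i1}}\tfrac{1-a_{i0}}{a_{i0}}\Bigr) + \log\tfrac{1-a_{i1}}{1-a_{i0}}\right),
\]
which is exactly \eqref{cor:crowdsourcing:eq1}. Hence the thresholding selector in \eqref{def:newsel} is well defined with this log-likelihood ratio, and coincides with the selector in the statement.

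Finally I would invoke Theorem~\ref{thm:fixed} verbatim: it asserts that on $\Theta_d(s,f_0,f_1)$ the selector \eqref{def:newsel} is minimax optimal and that the common value of its maximum risk and the minimax risk equals $s\Psi$, with $\Psi$ given by \eqref{psi}. Substituting the above log-likelihood ratio into \eqref{psi} yields the corollary. There is no real obstacle here: the only points requiring a line of verification are the product form of $\Pb_0$ and $\Pb_1$ (so that the log-ratio decomposes as a sum over workers) and the condition $\Pb_0\ne\Pb_1$, both of which follow immediately from the Bernoulli model and the assumption $a_{i0}\ne a_{i1}$.
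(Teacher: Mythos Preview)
Your proposal is correct and matches the paper's approach exactly: the paper simply states that the corollary is an immediate consequence of Theorem~\ref{thm:fixed}, and you have spelled out the verification that the crowdsourcing model fits that theorem's hypotheses and that the log-likelihood ratio takes the form~\eqref{cor:crowdsourcing:eq1}.
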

%
For suitable combinations of parameters $d,s, a_{i0},a_{i1}$, the exact asymptotic
value of the minimax risk $\Psi$ can be further analyzed to obtain
asymptotics of interest. Gao et al. \cite{GLZ} have studied a setting
of crowdsourcing problem which is different from the one we consider
here. They did not assume sparsity $s$, and instead of the class
$\Theta_d(s, f_0,f_1)$ of at most $s$-sparse binary sequences, they considered
the class of all possible binary sequences $\{0,1\}^d$. For this class,
Gao et al. \cite{GLZ} 
analyzed specific asymptotics of the minimax risk $\inf_{\tilde
\eta} \sup_{\eta\in\{0,1\}^d} d^{-1} \mathbf{E} \llvert \tilde
\eta-
\eta \rrvert $ in large deviations perspective.

\section{Asymptotic analysis. Phase transitions}\label{sec:asymp}

In this section, we conduct the asymptotic analysis of the problem of
variable selection. The results are derived as corollaries of the
minimax bounds of Section~\ref{sec:minimax}. We will assume that $d\to
\infty$ and that parameters $a=a_d$ and $s=s_d$ depend on $d$.

The first two asymptotic properties we study here are \textit{exact
recovery} and \textit{almost full recovery}. We use this terminology
following \cite{GJW2012,JJ2012} but we define these properties in a
different way, as asymptotic minimax properties for classes of vectors~$\theta$.
The papers \cite{GJW2012,JJ2012} considered a Bayesian
setup with random $\theta$ and studied a linear regression model with
i.i.d. Gaussian regressors rather than the sequence model \eqref{vectormodel}.

The study of \textit{exact recovery} and \textit{almost full recovery} will
be done here only for the classes $\Theta_d(s_d,a_d)$. The
corresponding results for the classes $\Theta_d^+ (s_d,a_d)$ or
$\Theta_d^- (s_d,a_d)$ are completely analogous.
We do not state them here for the sake of brevity.

\begin{definition} Let $(\Theta_d(s_d,a_d))_{d\ge1}$ be a sequence of
classes of sparse vectors:
\begin{itemize}
\item We say that
\emph{exact recovery is possible} for $(\Theta_d(s_d,a_d))_{d\ge
1}$ if there exists a selector $\hat\eta$ such that
%
\begin{gather}
\label{er1} \lim_{d\to\infty} \sup_{\theta\in\Theta_d(s_d,a_d)}\mathbf
{E}_{\theta} \llvert \hat{\eta}-\eta \rrvert =0.
\end{gather}
In this case, we say that $\hat\eta$ achieves exact recovery.

\item We say that \emph{almost full recovery is possible} for
$(\Theta_d(s_d,a_d))_{d\ge1}$ if there exists a selector $\hat\eta$
such that
%
\begin{gather}
\label{af1} \lim_{d\to\infty} \sup_{\theta\in\Theta_d(s_d,a_d)}
\frac{1}{s_d} \mathbf{E}_{\theta} \llvert \hat{\eta}-\eta \rrvert =0.
\end{gather}
In this case, we say that $\hat\eta$ achieves almost full recovery.
\end{itemize}
\end{definition}

It is of interest to characterize the sequences $(s_d,a_d)_{d\ge1}$,
for which exact recovery and almost full recovery are possible.
To describe the impossibility of exact or almost full recovery, we need
the following definition. 

\begin{definition} Let $(\Theta_d(s_d,a_d))_{d\ge1}$ be a sequence of
classes of sparse vectors:
\begin{itemize}
\item We say that
\emph{exact recovery is impossible} for $(\Theta_d(s_d,a_d))_{d\ge
1}$ if
%
\begin{gather}
\label{er2} \liminf_{d \to\infty} \inf_{\tilde{\eta}}\sup
_{\theta\in
\Theta_d(s_d,a_d)}\mathbf{E}_{\theta} \llvert \tilde{\eta}-\eta
\rrvert >0,
\end{gather}
\item We say that \emph{almost full recovery is impossible} for
$(\Theta_d(s_d,a_d))_{d\ge1}$ if
%
\begin{gather}
\label{af2} \liminf_{d \to\infty} \inf_{\tilde{\eta}}\sup
_{\theta\in
\Theta_d(s_d,a_d)} \frac{1}{s_d} \mathbf{E}_{\theta} \llvert
\tilde {\eta}-\eta \rrvert >0,
\end{gather}
where $ \inf_{\tilde{\eta}}$ denotes the infimum over all selectors.
\end{itemize}
\end{definition}

The following general characterization theorem is a straightforward
corollary of the results of Section~\ref{sec:minimax}.

\begin{theorem}\label{t3a}
\textup{(i)}
Almost full recovery is possible for $(\Theta_d(s_d,a_d))_{d\ge1}$ if
and only if $s_d \to \infty$ and
%
\begin{equation}
\label{eq1:t3a} \Psi_+(d, s_d,a_d)\to0 \qquad\text{as } d
\to\infty.
\end{equation}
In this case, the selector $\hat\eta$ defined in \eqref{selector}
with threshold
\eqref{threshold} achieves almost full recovery.

\textup{(ii)}
Exact recovery is possible for $(\Theta_d(s_d,a_d))_{d\ge1}$ if and
only if
%
\begin{equation}
\label{eq2:t3a} s_d\Psi_+(d, s_d,a_d)\to0
\qquad\text{as } d\to\infty.
\end{equation}
In this case, the selector $\hat\eta$ defined in \eqref{selector}
with threshold
\eqref{threshold} achieves exact recovery.
\end{theorem}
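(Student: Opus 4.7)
The plan is to derive both statements directly from the non-asymptotic machinery of Section~\ref{sec:minimax}. The two pieces I need are: Theorem~\ref{nonasymptotic}, which controls the risk of the thresholding selector $\hat\eta$ from above by $2\Psi(d,s,a)$, and Theorem~\ref{thm:lb}, which bounds the minimax risk on the smaller class $\Theta_d^+(s,a)$ from below by $\Psi_+(d,s,a)$. The trivial inclusion $\Theta_d^+(s_d,a_d)\subset\Theta_d(s_d,a_d)$ transfers the lower bound to the larger class, and the pointwise inequality $\Psi(d,s,a)\le\Psi_+(d,s,a)$ stated in (\ref{eq_psi}) allows both bounds to be expressed in terms of $\Psi_+$, yielding matching conditions up to a universal factor.

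For part (i), the sufficiency direction is immediate: under (\ref{eq1:t3a}), Theorem~\ref{nonasymptotic} combined with (\ref{eq_psi}) gives
\begin{equation*}
\sup_{\theta\in\Theta_d(s_d,a_d)}\tfrac{1}{s_d}\E_\theta|\hat\eta-\eta|\le 2\Psi(d,s_d,a_d)\le 2\Psi_+(d,s_d,a_d)\to 0,
\end{equation*}
so the thresholding selector $\hat\eta$ from (\ref{selector}) achieves almost full recovery. For the necessity direction, the class inclusion and Theorem~\ref{thm:lb} together imply
\begin{equation*}
\inf_{\widetilde\eta}\sup_{\theta\in\Theta_d(s_d,a_d)}\tfrac{1}{s_d}\E_\theta|\widetilde\eta-\eta|\ge \inf_{\widetilde\eta}\sup_{\theta\in\Theta_d^+(s_d,a_d)}\tfrac{1}{s_d}\E_\theta|\widetilde\eta-\eta|\ge \Psi_+(d,s_d,a_d).
\end{equation*}
If almost full recovery is possible then there is a selector making the leftmost quantity tend to zero, forcing $\Psi_+(d,s_d,a_d)\to 0$.

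Part (ii) follows by the identical argument after removing the $1/s_d$ normalization. Multiplying through by $s_d$, Theorem~\ref{nonasymptotic} and (\ref{eq_psi}) give $\sup_\theta\E_\theta|\hat\eta-\eta|\le 2s_d\Psi_+(d,s_d,a_d)$, which tends to zero under (\ref{eq2:t3a}), while the class inclusion and Theorem~\ref{thm:lb} provide the lower bound $s_d\Psi_+(d,s_d,a_d)$ on the minimax risk. Exact recovery is therefore possible exactly when $s_d\Psi_+(d,s_d,a_d)\to 0$.

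There is essentially no obstacle: the theorem repackages the non-asymptotic upper and lower bounds of Section~\ref{sec:minimax} into an asymptotic characterization, and the factor $2$ between the achievability bound $2\Psi$ and the lower bound $\Psi_+$ is harmless since it affects neither the convergence to zero nor the identity of the achieving selector. The only point to verify in writing up is that the infimum over \emph{all} selectors on the right-hand side of (\ref{er2})--(\ref{af2}) is controlled by the specific selector $\hat\eta$ from above, which is automatic because the infimum is bounded above by the risk of any particular selector.
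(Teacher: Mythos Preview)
Your proposal is correct and matches the paper's approach: the paper does not spell out a proof but simply states that Theorem~\ref{t3a} is ``a straightforward corollary of the results of Section~\ref{sec:minimax},'' and you have written out exactly that corollary by combining the upper bound of Theorem~\ref{nonasymptotic} (via \eqref{eq_psi}) with the lower bound of Theorem~\ref{thm:lb} and the inclusion $\Theta_d^+(s_d,a_d)\subset\Theta_d(s_d,a_d)$.
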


Although this theorem gives a complete solution to the problem,
conditions \eqref{eq1:t3a} and \eqref{eq2:t3a} are not quite
explicit. Intuitively, we would like to get a ``phase transition''
values $a_d^*$ such that exact (or almost full) recovery is possible
for $a_d$ greater than $a_d^*$ and is impossible for $a_d$ smaller than
$a_d^*$. Our aim now is to find such ``phase transition'' values. We
first do it in the almost full recovery framework.

The following bounds for the tails of Gaussian distribution will be useful:
%
\begin{equation}
\label{AS} \sqrt{\frac{2}\pi} \frac{e^{-y^2/2}}{y+\sqrt{y^2+4}} <
\frac
{1}{\sqrt{2\pi}}
\int_y^\infty e^{-u^2/2} \,du\leq\sqrt{
\frac{2}\pi} \frac{e^{-y^2/2}}{y+\sqrt{y^2+8/\pi}},
\end{equation}
for all $y\geq0$.
These bounds are an immediate consequence of formula 7.1.13 in \cite{AbrStegun} with $x= y/\sqrt{2}$.

Furthermore, we will need some nonasymptotic bounds for the expected
Hamming loss that will play a key role in the subsequent asymptotic
analysis. They are given in the next theorem.

\begin{theorem}\label{t4}
Assume that $s < d/2$.
\begin{itemize}
\item[(i)]
If
%
\begin{equation}
\label{ass:e} a^2 \ge\sigma^2 \bigl(2 \log\bigl((d-s)/s
\bigr) + W \bigr)\qquad \mbox{for some } W>0,
\end{equation}
then the selector $\hat\eta$ defined in \eqref{selector} with threshold
\eqref{threshold} satisfies
%
\begin{equation}
\label{eq:t4:1} \sup_{\theta\in\Theta_d(s,a)} \mathbf{E}_\theta \llvert
\hat \eta- \eta \rrvert \leq(2 + \sqrt{2 \pi}) s \Phi(-\Delta),
\end{equation}
where $\Delta$ is defined by
%
\begin{equation}
\label{DeltaERec} \Delta= \frac{W}{2 \sqrt{2 \log((d-s)/s) +W}} .
\end{equation}
\item[(ii)] If $a>0$ is such that
%
\begin{equation}
\label{lowE} a^2 \le\sigma^2 \bigl(2 \log\bigl((d-s)/s
\bigr) + W \bigr)\qquad \mbox{for some } W>0,
\end{equation}
then, for any $s'$ in $(0,s]$ we have
%
\begin{equation}
\label{eq:t4:2} \inf_{\tilde\eta} \sup_{\theta\in\Theta_d(s,a)} \mathbf
{E}_\theta \llvert \tilde\eta- \eta \rrvert \geq s' \Phi (-\Delta)
-4 s' \exp\left( -\frac{(s-s')^2}{2s}\right),
\end{equation}
where the infimum is taken over all selectors $\tilde\eta$ and
$\Delta>0$ is defined in \eqref{DeltaERec}.
\end{itemize}
\end{theorem}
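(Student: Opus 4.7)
The plan is to derive both parts from the results of Section~\ref{sec:minimax} combined with a simple monotonicity argument. Introduce $L := \log((d-s)/s)$, $x := a/(2\sigma)$, $y := \sigma L/a$, so that the threshold in \eqref{threshold} satisfies $t/\sigma = x+y$ and $(t-a)/\sigma = -\tau$, where $\tau := x - y = a/(2\sigma) - \sigma L/a$. Since $\tau'(a) = 1/(2\sigma) + \sigma L/a^2 > 0$, the map $a\mapsto\tau(a)$ is strictly increasing on $(0,\infty)$, and a direct algebraic check at $a_0 := \sigma\sqrt{2L+W}$ gives $\tau(a_0) = W/(2\sqrt{2L+W}) = \Delta$. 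Hence assumption~(i) yields $\tau \ge \Delta$, and assumption~(ii) yields $\tau \le \Delta$.

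For the upper bound in~(i), I would start with a coordinate-wise analysis. For any $\theta \in \Theta_d(s,a)$, the $d-s$ inactive coordinates each contribute $\Pb_\theta(\hat\eta_j = 1) = 2\Phi(-(x+y))$, while the worst case over the $s$ active coordinates (attained at $|\theta_j| = a$ by symmetry and monotonicity) contributes at most $\Phi(-\tau)$; hence
\[
\E_\theta|\hat\eta - \eta| \le 2s\,e^L\Phi(-(x+y)) + s\,\Phi(-\tau).
\]
The second summand is trivially $\le s\,\Phi(-\Delta)$. For the first, the change of variables $v = u - 2y$ in $\Phi(-(x+y)) = \int_{x+y}^\infty \phi(u)\,du$, combined with $\phi(v+2y) = \phi(v)\,e^{-2yv-2y^2}$, the monotonicity of $e^{-2yv}$ in $v$ on $[\tau,\infty)$, and the identity $2xy = L$, gives
\[
\Phi(-(x+y)) = e^{-2y^2}\int_\tau^\infty \phi(v)\,e^{-2yv}\,dv \le e^{-2y(y+\tau)}\Phi(-\tau) = e^{-L}\Phi(-\tau),
\]
that is, $e^L\Phi(-(x+y)) \le \Phi(-\tau) \le \Phi(-\Delta)$. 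Summing the two pieces yields $\E_\theta|\hat\eta - \eta| \le 3s\,\Phi(-\Delta) \le (2+\sqrt{2\pi})\,s\,\Phi(-\Delta)$, as required.

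For the lower bound in~(ii), the inclusion $\Theta_d^+(s,a) \subseteq \Theta_d(s,a)$ together with Theorem~\ref{thm:lb} gives
\[
\inf_{\tilde\eta}\sup_{\theta\in\Theta_d(s,a)}\E_\theta|\tilde\eta - \eta| \ge \inf_{\tilde\eta}\sup_{\theta\in\Theta_d^+(s,a)}\E_\theta|\tilde\eta - \eta| \ge s\,\Psi_+(d,s,a).
\]
Since $\Psi_+(d,s,a) = e^L\Phi(-(x+y)) + \Phi(-\tau) \ge \Phi(-\tau)$ and $\tau \le \Delta$ under~(ii), we conclude $\Psi_+(d,s,a) \ge \Phi(-\Delta)$, which delivers the required lower bound $s\,\Phi(-\Delta)$.

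The key substantive step is the change-of-variables argument yielding $e^L\Phi(-(x+y)) \le \Phi(-\tau)$; once this and the monotonicity of $\tau$ in $a$ are in hand, everything else reduces to algebra and the comparison of the two sparsity classes. The constant $2+\sqrt{2\pi}$ is in fact slightly loose for this selector, since the argument above already produces the tighter bound $3s\,\Phi(-\Delta)$; alternatively one could combine the Chernoff bound $\Phi(-z)\le \tfrac12 e^{-z^2/2}$ with the Mills-ratio estimates~\eqref{AS} to land exactly on $(2+\sqrt{2\pi})$.
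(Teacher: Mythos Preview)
Your proof is correct, and part~(ii) is essentially identical to the paper's argument (drop to $\Theta_d^+(s,a)$, invoke Theorem~\ref{thm:lb}, keep only the second term of $\Psi_+$, and use the monotonicity of $\tau(a)$).

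Part~(i) differs in the key technical step. The paper starts from the cruder bound $2(d/s-1)\Phi(-t/\sigma)+2\Phi(-(a-t)_+/\sigma)$ of Theorem~\ref{nonasymptotic}, then uses the identity $(d/s-1)e^{-t^2/(2\sigma^2)}=e^{-(a-t)^2/(2\sigma^2)}$ together with the two-sided Mills-ratio bounds~\eqref{AS} and the comparison $t>a-t$ to obtain $(d/s-1)\Phi(-t/\sigma)\le \sqrt{\pi/2}\,\Phi(-(a-t)/\sigma)$, which produces exactly the constant $2+\sqrt{2\pi}$. You instead (a)~use the sharper active-coordinate bound $\Pb_\theta(|X_j|<t)\le \Phi(-\tau)$ rather than $2\Phi(-\tau)$, and (b)~prove $e^L\Phi(-(x+y))\le \Phi(-\tau)$ directly by the substitution $v=u-2y$ and the monotonicity of $e^{-2yv}$ on $[\tau,\infty)$, bypassing~\eqref{AS} entirely. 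Your route is more elementary and yields the tighter constant~$3$, as you note; the paper's route has the advantage of making the constant $2+\sqrt{2\pi}$ arise naturally from the Mills-ratio inequalities, which are reused elsewhere (e.g., in the proofs of Theorems~\ref{thm:asymp} and~\ref{thm:asympE}).
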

The proof is given in the Appendix.



The next theorem is an easy consequence of Theorem~\ref{t4}. It
describes a ``phase transition'' for $a_d$ in the problem of almost full
recovery.

\begin{theorem}\label{thm:asymp}
Assume that $\limsup_{d\to\infty} s_d/d<1/2$:
\begin{itemize}
\item[(i)] If, for all $d$ large enough,
\[
a^2_d \ge\sigma^2 \bigl(2\log
\bigl((d-s_d)/s_d\bigr) + A_d \sqrt{2\log
\bigl((d-s_d)/s_d\bigr)} \bigr)
\]
for an arbitrary sequence $A_d\to\infty$, as $d\to\infty$,
then the selector $\hat\eta$ defined by \eqref{selector} and \eqref
{threshold} achieves almost full recovery:
\[
\lim_{d \to\infty}\sup_{\theta\in\Theta_d(s_d,a_d)}\frac{1}{s_d}
\mathbf{E}_\theta \llvert \hat\eta- \eta \rrvert = 0.
\]
\item[(ii)] Moreover, if there exists $A>0$ such that for all $s$ and $d$
large enough the reverse inequality holds:
\begin{equation}
\label{eq2:thm:asympE} a^2_d \le\sigma^2 \bigl(2\log
\bigl((d-s_d)/s_d\bigr) + A \sqrt{2\log
\bigl((d-s_d)/s_d\bigr)} \bigr)
\end{equation}
then almost full recovery is impossible:
\begin{equation*}
\liminf_{d \to\infty} \inf_{\tilde{\eta}}\sup
_{\theta\in
\Theta_d(s_d,a_d)} \frac{1}{s_d} \mathbf{E}_{\theta} \llvert
\tilde {\eta}-\eta \rrvert 
>0.
\end{equation*}
Here, $\inf_{\tilde{\eta}}$ is the infimum over all selectors
$\tilde\eta$.
\end{itemize}
\end{theorem}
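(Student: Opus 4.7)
My plan is to deduce Theorem~\ref{thm:asymp} as a direct consequence of Theorem~\ref{t4}, by making the canonical choice of the slack parameter $W$ in terms of $A_d$ (resp.\ $A$) and the logarithmic quantity $L_d := 2\log((d-s_d)/s_d)$. Note that, by the assumption $\limsup_{d\to\infty} s_d/d < 1/2$, there exists $c>0$ such that $L_d \ge c$ for all $d$ large enough; this will be used throughout.

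For part (i), I would set $W_d = A_d\sqrt{L_d}$, so that the hypothesis of Theorem~\ref{t4}(i) is exactly \eqref{ass:e}. Applying that theorem yields
$$
\sup_{\theta\in\Theta_d(s_d,a_d)} \frac 1{s_d}\, \E_\theta|\hat\eta-\eta|\le (2+\sqrt{2\pi})\,\Phi(-\Delta_d), \qquad \Delta_d^2 = \frac{A_d^2\, L_d}{4(L_d + A_d\sqrt{L_d})} = \frac{A_d^2}{4(1 + A_d/\sqrt{L_d})}.
$$
It remains to verify $\Delta_d\to\infty$. I would split into two cases: if $A_d/\sqrt{L_d}$ stays bounded by some $C$, then $\Delta_d^2 \ge A_d^2/(4(1+C))\to\infty$; otherwise $A_d/\sqrt{L_d}\to\infty$ (along a subsequence), and then $\Delta_d^2 \sim A_d\sqrt{L_d}/4 \ge A_d\sqrt{c}/4\to\infty$. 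Either way $\Phi(-\Delta_d)\to 0$, which yields almost full recovery.

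For part (ii), I would apply Theorem~\ref{t4}(ii) with the same choice $W = A\sqrt{L_d}$; the hypothesis \eqref{lowE} then reduces to the assumed upper bound on $a_d^2$. The theorem gives
$$
\inf_{\widetilde\eta} \sup_{\theta\in\Theta_d(s_d,a_d)} \frac 1{s_d}\,\E_\theta|\widetilde\eta - \eta| \ge \Phi(-\Delta_d^*), \qquad \Delta_d^* = \frac{A}{2\sqrt{1 + A/\sqrt{L_d}}}.
$$
Since $A>0$ and $L_d>0$, we have $\Delta_d^* < A/2$, hence $\Phi(-\Delta_d^*) > \Phi(-A/2)$, and taking $\liminf$ gives the announced bound $\Phi(-A/2) > 0$.

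The only genuinely subtle point is the case analysis needed to conclude $\Delta_d\to\infty$ in part (i); the rest is essentially bookkeeping using the bounds of Theorem~\ref{t4} with the natural choice of $W$ that matches the gap between $a_d^2$ and $\sigma^2 L_d$. The monotonicity $\Phi(-\Delta_d^*) > \Phi(-A/2)$ in part (ii) makes the lower bound immediate without needing $L_d\to\infty$.
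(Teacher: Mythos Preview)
Your proposal is correct and mirrors the paper's proof: both apply Theorem~\ref{t4} with $W$ chosen as the gap $A_d\sqrt{L_d}$ (resp.\ $A\sqrt{L_d}$) and then analyse the resulting $\Delta$. One small refinement: your case split in part~(i) is not quite airtight as stated, since in the ``unbounded'' case you only control a subsequence of $\Delta_d$; the paper sidesteps this by observing that $\Delta^2 = A_d^2 x/(4(x+A_d))$ is increasing in $x=\sqrt{L_d}$, so that $\Delta_d^2 \ge A_d^2\sqrt{c}/\big(4(\sqrt{c}+A_d)\big)\to\infty$ directly.
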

The proof is given in the Appendix.

Inspection of the proof shows that the lower bound in Theorem~\ref{thm:asymp} holds true for an arbitrary $s_d\geq 5$ (possibly fixed), if (\ref{eq2:thm:asympE}) is satisfied for some $A$ in (0,1).

Under the sparsity assumption that
%
\begin{equation}
\label{condsp}s_d \to \infty, \, d/s_d \to\infty\qquad\mbox{as } d \to\infty,
\end{equation}
Theorem~\ref{thm:asymp} shows that the ``phase transition'' for almost
full recovery occurs at the value $a_d=a_d^*$, where
%
\begin{equation}
\label{phase} a_d^* = \sigma\sqrt{2\log\bigl((d-s_d)/s_d
\bigr)} \bigl(1+o(1) \bigr).
\end{equation}
Furthermore, Theorem~\ref{thm:asymp} details the behavior of the
$o(1)$ term here.

We now state a corollary of Theorem~\ref{thm:asymp} under simplified
assumptions.

\begin{corollary}\label{cor1}
Assume that \eqref{condsp} holds
and set
\[
a_d=\sigma\sqrt{2(1+\delta) \log(d/s_d)} \qquad\mbox{for
some } \delta>0.
\]
Then the selector $\hat\eta$ defined by \eqref{selector} with
threshold $t=\sigma\sqrt{2(1+\varepsilon(\delta)) \log(d/s_d)}$
where $\varepsilon(\delta)>0$ depends only on $\delta$, achieves
almost full recovery.
\end{corollary}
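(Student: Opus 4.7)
The plan is to verify directly that with the stated threshold $t_d = \s\sqrt{2(1+\varepsilon(\delta))\log(d/s_d)}$, a suitable choice of $\varepsilon(\delta) \in (0,\delta)$ (for concreteness $\varepsilon(\delta)=\delta/2$) makes both the false negative and false positive contributions to the Hamming loss vanish after normalization by $s_d$. The constraint $\varepsilon(\delta)<\delta$ is what guarantees that the threshold lies strictly below the signal level asymptotically: under \eqref{condsp} we have $\log(d/s_d)\to\infty$, hence
$$
\frac{a_d - t_d}{\s} = \sqrt{2\log(d/s_d)}\bigl(\sqrt{1+\delta}-\sqrt{1+\delta/2}\bigr) \to \infty.
$$

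Next I would decompose the loss coordinate-wise. For any $\theta\in\Theta_d(s_d,a_d)$,
$$
\E_\theta|\hat\eta-\eta| = \sum_{j\in S(\theta)} \Pb_\theta(|X_j|<t_d) + \sum_{j\notin S(\theta)} \Pb_\theta(|X_j|\ge t_d).
$$
For $j\in S(\theta)$, writing the event in terms of $X_j-\theta_j\sim\mathcal{N}(0,\s^2)$ and using $|\theta_j|\ge a_d$ together with the symmetry between $\theta_j\ge a_d$ and $\theta_j\le -a_d$, one gets $\Pb_\theta(|X_j|<t_d) \le \Phi((t_d-a_d)/\s)$, which tends to $0$. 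For $j\notin S(\theta)$, $X_j\sim\mathcal{N}(0,\s^2)$, so the Gaussian tail bound \eqref{AS} yields
$$
\Pb_\theta(|X_j|\ge t_d) = 2\Phi(-t_d/\s) \le \sqrt{\tfrac{2}{\pi}}\,\frac{(s_d/d)^{1+\varepsilon(\delta)}}{\sqrt{2(1+\varepsilon(\delta))\log(d/s_d)}}.
$$
Summing both contributions, using $(d-s_d)/s_d \le d/s_d$, and dividing by $s_d$ gives
$$
\frac{1}{s_d}\E_\theta|\hat\eta-\eta| \le \Phi\bigl((t_d-a_d)/\s\bigr) + \sqrt{\tfrac{2}{\pi}}\,\frac{(s_d/d)^{\varepsilon(\delta)}}{\sqrt{2(1+\varepsilon(\delta))\log(d/s_d)}},
$$
and both terms tend to $0$ uniformly in $\theta$ as $d\to\infty$, since $(t_d-a_d)/\s\to-\infty$ and $s_d/d\to 0$.

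The only subtlety is the strict inequality $\varepsilon(\delta)<\delta$: picking $\varepsilon(\delta)\ge\delta$ would push the threshold to or beyond the signal level and the false negative rate could fail to vanish. An equivalent approach would be to invoke Theorem~\ref{thm:asymp}(i) with, say, $A_d=\sqrt{\log(d/s_d)}\to\infty$, which applies to $\hat\eta$ with the exact threshold \eqref{threshold} at $a=a_d$; a short computation shows that threshold equals $\s\sqrt{2\log(d/s_d)}\cdot (2+\delta)/(2\sqrt{1+\delta})\,(1+o(1))$, so it has the stated form with the explicit choice $\varepsilon(\delta) = \delta^2/(4(1+\delta))>0$. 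There is no real obstacle here — the argument is essentially a bookkeeping exercise built on the Gaussian tail bound \eqref{AS} and the two competing orders $\log(d/s_d)$ (signal) versus $\sqrt{\log(d/s_d)}$ (fluctuation).
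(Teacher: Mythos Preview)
Your proof is correct. The paper does not give an explicit proof of this corollary, presenting it simply as a consequence of Theorem~\ref{thm:asymp} under simplified assumptions; so the paper's implied route is your second one, via Theorem~\ref{thm:asymp}(i), which yields the selector with threshold \eqref{threshold} and hence the explicit choice $\varepsilon(\delta)=\delta^2/(4(1+\delta))$ that you computed. Your direct argument with $\varepsilon(\delta)=\delta/2$ is equally valid and in fact slightly cleaner, because it uses exactly the threshold stated in the corollary rather than the threshold \eqref{threshold}, which only matches the form $\s\sqrt{2(1+\varepsilon(\delta))\log(d/s_d)}$ up to a $(1+o(1))$ factor coming from $\log(d/s_d-1)$ versus $\log(d/s_d)$. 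The direct route relies on nothing more than the coordinate-wise bound \eqref{term1} (valid for any $t>0$, as noted there) together with the Gaussian tail estimate \eqref{AS}, so it bypasses the need to track that $o(1)$ discrepancy.
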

In the particular case of $s_d = d^{1-\beta}(1+o(1))$ for some $\beta
\in(0,1)$, condition \eqref{condsp} is satisfied. Then $\log(d /
s_d)=\beta(1+o(1)) \log d$ and it follows from Corollary~\ref{cor1}
that for $a_d=\sigma\sqrt{2\beta(1+\delta) \log d}$ the selector
with components $\hat\eta_j = I ( \llvert  X_j \rrvert >
\sigma\sqrt{2\beta(1+\varepsilon) \log d} )$ achieves almost
full recovery. This is in agreement with the findings of \cite
{GJW2012,JJ2012} where an analogous particular case of $s_d$ was
considered for a different model and the Bayesian definition of almost
full recovery.

We now turn to the problem of exact recovery. First, notice that if
\[
\limsup_{d\to\infty}s_d<\infty
\]
the properties of exact recovery and almost full recovery are
equivalent. Therefore, it suffices to consider exact recovery only when
$s_d\to\infty$ as $d\to\infty$. Under this assumption,
the ``phase transition'' for $a_d$ in the problem of exact recovery is
described in the next theorem.

\begin{theorem}\label{thm:asympE}
Assume that $\lim_{d\to\infty} s_d =\infty$ and $\limsup_{d\to
\infty} s_d/d<1/2$.
\begin{itemize}
\item[(i)]
If
\[
a^2_d \ge\sigma^2 \bigl(2 \log
\bigl((d-s_d)/s_d\bigr) + W_d \bigr)
\]
for all $d$ large enough, where the sequence $W_d$ is such that
%
\begin{equation}
\label{eq1:thm:asympE} \liminf_{d\to\infty} \frac{W_d}{4  (\log(s_d) + \sqrt{\log
(s_d) \log(d-s_d)} )} \ge1,
\end{equation}
then the selector $\hat\eta$ defined by \eqref{selector} and \eqref
{threshold} achieves exact recovery:
%
\begin{equation}
\label{eq2a:thm:asymp} \lim_{d \to\infty}\sup_{\theta\in\Theta_d(s_d,a_d)} \mathbf
{E}_\theta \llvert \hat\eta- \eta \rrvert = 0.
\end{equation}
\item[(ii)] If the complementary condition holds,
\[
a^2_d \le\sigma^2 \bigl(2 \log
\bigl((d-s_d)/s_d\bigr) + W_d \bigr)
\]
for all $d$ large enough, where the sequence $W_d$ is such that
%
\begin{equation}
\label{eq2:thm:asymp} \limsup_{d\to\infty} \frac{W_d}{4  (\log(s_d) + \sqrt{\log
(s_d) \log(d-s_d)} )} < 1,
\end{equation}
then exact recovery is impossible, and moreover we have
\begin{equation*}
\lim_{d\to\infty} \inf_{\tilde{\eta}}\sup
_{\theta\in
\Theta_d(s_d,a_d)} \mathbf{E}_{\theta} \llvert \tilde{\eta}-\eta
\rrvert = \infty.
\end{equation*}
Here, $\inf_{\tilde{\eta}}$ is the infimum over all selectors
$\tilde\eta$.
\end{itemize}
\end{theorem}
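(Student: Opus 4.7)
The plan is to deduce both parts from the non-asymptotic bounds of Theorem \ref{t4} after identifying the critical scale at which the quantity $s_d \Phi(-\Delta_d)$ changes regimes. Write $L_d = 2\log((d-s_d)/s_d)$ and $\Delta_d = W_d/(2\sqrt{L_d + W_d})$ as in \eqref{DeltaERec}, and set $W^*_d = 4(\log s_d + \sqrt{\log s_d \log(d-s_d)})$. A short algebraic computation, using $L_d = 2\log(d-s_d) - 2\log s_d$, shows that $W^*_d$ is the unique positive root of $W^2 = 8(L_d + W)\log s_d$; equivalently, plugging $W = W^*_d$ into $\Delta^2(W) = W^2/(4(L_d + W))$ gives exactly $\Delta^2(W^*_d) = 2\log s_d$. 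Writing $\rho_d = W^*_d/(L_d + W^*_d) \in (0,1)$, this identity then yields the explicit relation
\[
\frac{\Delta^2(c W^*_d)}{2\log s_d} = \frac{c^2}{1 - (1-c)\rho_d}, \qquad c > 0,
\]
so the hypotheses on $W_d/W^*_d$ translate directly into two-sided comparisons of $\Delta_d^2$ with $2\log s_d$.

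For part (i), Theorem \ref{t4}(i) gives $\sup_\theta \E_\theta |\hat\eta - \eta| \le (2 + \sqrt{2\pi})\, s_d\, \Phi(-\Delta_d)$. The liminf assumption together with the identity above forces $\Delta_d^2/(2\log s_d) \to 1$, and in particular $\Delta_d^2 \ge 2\log s_d - o(\log s_d)$. Applying the Mills-type upper bound from \eqref{AS}, namely $\Phi(-\Delta_d) \le e^{-\Delta_d^2/2}/(\Delta_d\sqrt{2\pi})$ for $\Delta_d$ large, produces $s_d\, \Phi(-\Delta_d) \le s_d^{1-\Delta_d^2/(2\log s_d)}/(\Delta_d\sqrt{2\pi}) = o(1)$, using $s_d\to\infty$, which implies the claimed convergence \eqref{eq2a:thm:asymp}.

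For part (ii), Theorem \ref{t4}(ii) gives the matching lower bound $s_d\, \Phi(-\Delta_d)$ on the minimax risk. The limsup assumption provides a fixed $\delta > 0$ with $W_d \le (1-\delta) W^*_d$ for all large $d$, whence the identity above gives $\Delta_d^2 \le (1-\delta) \cdot 2\log s_d$, since $c^2/(1-(1-c)\rho) \le c$ whenever $0<c<1$ and $\rho \in (0,1)$. Applying the Mills-type lower bound from \eqref{AS} then yields $s_d\, \Phi(-\Delta_d) \gtrsim s_d^{\delta}/\sqrt{2\log s_d} \to \infty$, which proves the stated divergence.

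The main obstacle I expect is making the translation ``linear constraint on $W_d/W^*_d$'' to ``constraint on $\Delta_d^2/(2\log s_d)$'' uniform over the full range of sparsity levels, since $\rho_d$ may take any value in $(0,1)$ depending on the relative growth of $s_d$ and $d$. The exact identity $(W^*_d)^2 = 8(L_d + W^*_d)\log s_d$ is what enables this translation in a single step, bypassing any need for a Taylor expansion or a case split between the regimes $L_d \ll W_d$ (where $\Delta^2 \approx W/4$) and $L_d \gg W_d$ (where $\Delta^2 \approx W^2/(4L)$).
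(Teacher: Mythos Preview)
Your overall strategy matches the paper's: reduce to Theorem~\ref{t4}, recognize $W^*_d$ as the unique positive root of $\Delta^2(W)=2\log s_d$, and translate the hypotheses on $W_d/W^*_d$ into comparisons of $\Delta_d^2$ with $2\log s_d$. Your closed-form identity $\Delta^2(cW^*_d)/(2\log s_d)=c^2/(1-(1-c)\rho_d)$ is a neat packaging of what the paper obtains via the direct computations \eqref{eq3a:thm:asympE}, \eqref{eq3b:thm:asympE} and the algebra leading to \eqref{eq5:thm:asymp}; in particular, your one-line bound $c^2/(1-(1-c)\rho)\le c$ for $c\in(0,1)$ recovers the paper's inequality $\Delta_d^2-2\log s_d\le 2A(A-1)\log s_d$ with $A=1-\delta$.

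Part~(ii) is correct, with one small omission: you implicitly take $W_d>0$, which is needed for $\Delta_d>0$ and for the Mills lower bound from \eqref{AS}. The paper disposes of $W_d\le 0$ separately by invoking Theorem~\ref{thm:asymp}(ii); you should add one line for that case.

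Part~(i) has a genuine slip. The assertion that the liminf hypothesis ``forces $\Delta_d^2/(2\log s_d)\to 1$'' is false: nothing prevents $W_d/W^*_d$ from being large along subsequences, so the ratio need not converge. More importantly, the weakened consequence you then invoke, $\Delta_d^2\ge 2\log s_d - o(\log s_d)$, is \emph{not} sufficient to conclude $s_d^{\,1-\Delta_d^2/(2\log s_d)}/\Delta_d=o(1)$. If the deficit is $2\epsilon_d\log s_d$ with, say, $\epsilon_d=(\log s_d)^{-1/2}$, your bound becomes of order $\exp(\sqrt{\log s_d})/\sqrt{\log s_d}\to\infty$. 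The fix within your own framework is immediate: use monotonicity of $W\mapsto\Delta^2(W)$ (equivalently, evaluate your identity at $c\ge 1$) to get the \emph{sharp} inequality $\Delta_d^2\ge 2\log s_d$, so that the exponent $1-\Delta_d^2/(2\log s_d)$ is $\le 0$ and the bound reduces to $\le (\Delta_d\sqrt{2\pi})^{-1}\to 0$. This is exactly how the paper proceeds, bounding the risk by $3\min\{1,\Delta_d^{-1}\}\exp\big(-(\Delta_d^2-2\log s_d)/2\big)\le 3/\sqrt{2\log s_d}$.
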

The proof is given in the Appendix.

Some remarks are in order here. First of all, Theorem~\ref{thm:asympE}
shows that
the ``phase transition'' for exact recovery occurs at $W_d=4  (\log
(s_d) + \sqrt{\log(s_d) \log(d-s_d)} )$, which corresponds to
the critical value $a_d=a_d^*$ of the form
%
\begin{equation}
\label{phase1} a_d^* = \sigma \bigl(\sqrt{2\log(d-s_d)}
+ \sqrt{2\log s_d} \bigr).
\end{equation}
This value is greater than the critical value $a_d^*$ for almost full
recovery [cf. \eqref{phase}], which is intuitively quite clear.
The optimal threshold \eqref{threshold} corresponding to \eqref
{phase1} has a simple form:
\[
t_d^* = \frac{a_d^*}2+\frac{\sigma^2}{a_d^*}\log \biggl(
\frac
{d}{s_d} -1 \biggr)= \sigma\sqrt{2\log(d-s_d)}.
\]
For example, if $s_d = d^{1-\beta}(1+o(1))$ for some $\beta\in
(0,1)$, then
$a_d^* \sim\sigma(1+ \sqrt{1-\beta}) \sqrt{2 \log d}$. In this
particular case, Theorem~\ref{thm:asympE} implies that if $a_d=\sigma
(1+ \sqrt{1-\beta})\sqrt{2(1+\delta) \log d}$ for some $\delta>0$,
then exact recovery is possible and the selector with threshold $t =
\sigma\sqrt{2 (1+\varepsilon) \log d}$ for some $\varepsilon>0$
achieves exact recovery. This is in agreement with the results of \cite
{GJW2012,JJ2012} where an analogous particular case of $s_d$ was
considered for a different model and the Bayesian definition of exact
recovery. For our model, even a sharper result is true; namely, a
simple universal threshold $t = \sigma\sqrt{2 \log d}$ guarantees
exact recovery adaptively in the parameters $a$ and $s$. Intuitively,
this is suggested by the form of $t_d^*$. The precise statement is
given in Theorem~\ref{thm:adaptE} below.

Finally, we state an asymptotic corollary of Theorem~\ref
{cor:recovery_pattern} showing that the selector $\hat\eta$
considered above is sharp
in the asymptotically minimax sense with respect to the risk defined as
the probability of wrong recovery.

\begin{theorem}\label{th:sharp:asymp}
Assume that exact recovery is possible for the classes $(\Theta
_d(s_d,a_d))_{d\ge1}$ and $(\Theta_d^+(s_d,a_d))_{d\ge1}$, that is,
condition \eqref{eq2:t3a} holds. Then, for the selectors $\hat\eta$
and $\hat\eta^+$ defined by \eqref{selector}, \eqref{selector+} and
\eqref{threshold}, and for the selector $\bar\eta$ defined by
\eqref{selector2} and \eqref{threshold2}, we have
\begin{equation*}
\begin{aligned}
\lim_{d\to\infty} \sup_{\theta\in\Theta_d^+(s_d,a_d)} \frac
{\mathbf{P}
_\theta(S_{\hat\eta^+} \ne S(\theta))}{s_d\Psi_+(d,s_d,a_d)}&=
\lim_{d\to\infty} \inf_{\tilde\eta\in\mathcal T} \sup
_{\theta
\in\Theta_d^+(s_d,a_d)} \frac{\mathbf{P}_\theta(S_{\tilde\eta} \ne
S(\theta))}{s_d\Psi_+(d,s_d,a_d)} = 1,
\\
\lim_{d\to\infty} \sup_{\theta\in\Theta_d(s_d,a_d)} \frac
{\mathbf{P}
_\theta(S_{\bar\eta} \ne S(\theta))}{s_d \bar\Psi
(d,s_d,a_d)}&=
\lim_{d\to\infty} \inf_{\tilde\eta\in\mathcal
T} \sup
_{\theta\in\Theta_d(s_d,a_d)} \frac{\mathbf{P}_\theta
(S_{\tilde\eta} \ne S(\theta))}{s_d \bar\Psi(d,s_d,a_d)} = 1
\end{aligned}
\end{equation*}
and
\[
\limsup_{d\to\infty} \sup_{\theta\in\Theta_d(s_d,a_d)}
\frac
{\mathbf{P}_\theta(S_{\hat\eta} \ne S(\theta))}{s_d\Psi
_+(d,s_d,a_d)}\le2.
\]
\end{theorem}

Note that the threshold \eqref{threshold} depends on the parameters
$s$ and $a$, so that the selectors considered in all the results above
are not adaptive. In the next section, we propose adaptive selectors
that achieve almost full recovery and exact recovery without the
knowledge of $s$ and $a$.

\begin{remark}\label{exhsearch}
Another procedure of variable selection is the exhaustive search
estimator of the support $S(\theta)$ defined as
\[
\tilde S = \mathop{\operatorname{argmax}}_{C \subseteq\{ 1,\dots,d \}:
 \llvert  C \rrvert =s} \sum
_{j \in C} X_j.
\]
This estimator was studied by Butucea \textit{et al.} \cite{BIS}. The
selection procedure can be equivalently stated as choosing the indices
$j$ corresponding to $s$ largest order statistics of the sample
$(X_1,\dots,X_d)$. In \cite{BIS}, Theorem~2.5, it was shown that, on
the class $\Theta_d^+(s_d,a_d)$, the probability of wrong recovery
$\mathbf{P}_\theta(\tilde S \neq S(\theta))$ tends to 0 as $d\to
\infty$
under a stronger condition on $(s_d,a_d)$ than \eqref{eq2:t3a}. The
rate of this convergence was not analyzed there. If we denote by $\eta
_{\tilde S} $ the selector with components $I(j \in\tilde S)$ for $j$
from 1 to $d$, it can be proved that $\mathbf{E} \llvert \eta
_{\tilde S} -
\eta \rrvert \leq2 \mathbf{E} \llvert \hat\eta^+ - \eta
 \rrvert $, and thus the risk of $\eta_{\tilde S}$ is within at least a factor 2
of the minimax risk over the class $\Theta^+_d(s, a)$.
\end{remark}

\section{Adaptive selectors}\label{sec:adapt}

In this section, we consider the asymptotic setup as in Section~\ref{sec:asymp} and construct the selectors
that provide almost full and exact recovery adaptively, that is,
without the knowledge of $a$ and $s$.

As discussed in Section~\ref{sec:asymp}, the issue of adaptation for
exact recovery is almost trivial. Indeed, the expressions for minimal
value $a_d^*$, for which exact recovery is possible [cf. \eqref
{phase1}], and for the corresponding optimal threshold $t_d^*$ suggest
that taking a selector with the universal threshold $t = \sigma\sqrt
{2 \log d}$ is enough to achieve exact recovery simultaneously for all
values $(a_d,s_d)$, for which the exact recovery is possible. This
point is formalized in the next theorem.

\begin{theorem}\label{thm:adaptE}
Assume that $s_d\to\infty$ as $d\to\infty$ and that $\limsup_{d\to
\infty} s_d/d<1/2$. Let the sequence $(a_d)_{d\ge1}$ be above the
phase transition level for exact recovery, that is, $a_d\ge a_d^*$ for
all $d$, where $a_d^*$ is defined in \eqref{phase1}. Then the selector
$\hat\eta$ defined by \eqref{selector} with threshold $t = \sigma
\sqrt{2 \log d}$ achieves exact recovery.
\end{theorem}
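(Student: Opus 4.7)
The plan is to control the expected Hamming loss by splitting it into false positives on the zero coordinates and false negatives on the signal coordinates, and to show that the universal threshold $t=\s\sqrt{2\log d}$, although slightly larger than the oracle threshold $t_d^*=\s\sqrt{2\log(d-s_d)}$, leaves enough room on both sides under the assumption $a_d\ge a_d^*$. Concretely, for any $\theta\in\Theta_d(s_d,a_d)$ with support $S(\theta)$ I will write
\begin{equation*}
\E_\theta|\hat\eta-\eta|=\sum_{j\notin S(\theta)}\Pb_\theta(|X_j|\ge t)+\sum_{j\in S(\theta)}\Pb_\theta(|X_j|<t)\le 2(d-s_d)\,\Phi(-t/\s)+s_d\,\Phi((t-a_d)/\s),
\end{equation*}
using symmetry of the noise for the first sum and monotonicity (i.e.\ bounding the rejection probability at the worst signal level $|\theta_j|=a_d$) for the second.

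For the false-positive term I would plug in $t/\s=\sqrt{2\log d}$ and apply the upper Gaussian tail bound \eqref{AS}, which gives $\Phi(-\sqrt{2\log d})\le c/(d\sqrt{\log d})$ for a constant $c$. Multiplying by $2(d-s_d)\le 2d$ yields an $O(1/\sqrt{\log d})$ bound, which vanishes as $d\to\infty$.

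For the false-negative term the main computation is to quantify $t-a_d$. Set $c_d:=\sqrt{2\log d}-\sqrt{2\log(d-s_d)}$. Since $a_d\ge a_d^*=\s(\sqrt{2\log(d-s_d)}+\sqrt{2\log s_d})$, I get $(t-a_d)/\s\le c_d-\sqrt{2\log s_d}$. The arithmetic identity $c_d=2\log(d/(d-s_d))/(\sqrt{2\log d}+\sqrt{2\log(d-s_d)})$ combined with the assumption $\limsup s_d/d<1/2$ (so $d/(d-s_d)$ is bounded) gives both $c_d\to 0$ and, more importantly, $c_d\sqrt{2\log s_d}\le c_d\sqrt{2\log d}=O(1)$. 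Applying \eqref{AS} once more,
\begin{equation*}
s_d\,\Phi\bigl(c_d-\sqrt{2\log s_d}\bigr)\le \frac{s_d\,e^{-(\sqrt{2\log s_d}-c_d)^2/2}}{\sqrt{2\pi}\bigl(\sqrt{2\log s_d}-c_d\bigr)}=\frac{e^{c_d\sqrt{2\log s_d}-c_d^2/2}}{\sqrt{2\pi}\bigl(\sqrt{2\log s_d}-c_d\bigr)},
\end{equation*}
where the equality uses $e^{-\log s_d}=1/s_d$. The numerator is bounded by the previous remark and the denominator tends to infinity because $s_d\to\infty$, so this contribution also vanishes.

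The main obstacle is precisely the bookkeeping in the last display: one must confirm that the loss of sharpness caused by replacing $t_d^*$ by $\s\sqrt{2\log d}$ costs only a bounded multiplicative factor in $s_d\Phi((t-a_d)/\s)$, and this is exactly where the hypothesis $\limsup s_d/d<1/2$ is needed (it keeps $\log(d/(d-s_d))$ bounded, hence $c_d\sqrt{2\log s_d}$ bounded). Combining the two vanishing bounds and taking the supremum over $\theta\in\Theta_d(s_d,a_d)$ completes the proof.
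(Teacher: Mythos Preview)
Your proof is correct and follows essentially the same approach as the paper's: both split the Hamming risk into a false-positive term (bounded via the Gaussian tail at level $t/\sigma=\sqrt{2\log d}$, yielding $O(1/\sqrt{\log d})$) and a false-negative term (reduced via $a_d\ge a_d^*$ to the worst case and controlled by the same Gaussian-tail estimate \eqref{AS}). The only cosmetic differences are that the paper uses the cruder bound $\Pb(|\xi|>(a_d-t)_+/\sigma)$ on the miss probability (with an extra factor~2) and invokes the inequality $\sqrt{x}-\sqrt{y}\le (x-y)/\sqrt{2y}$, whereas you use the equivalent identity $\sqrt{x}-\sqrt{y}=(x-y)/(\sqrt{x}+\sqrt{y})$; in both cases the crux is the observation that $c_d\sqrt{2\log s_d}=O(1)$ thanks to $\limsup s_d/d<1/2$, so the passage from the oracle threshold $t_d^*$ to the universal threshold $\sigma\sqrt{2\log d}$ costs only a bounded multiplicative constant.
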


The proof of this theorem is given in the Appendix.


We now turn to the problem of adaption for almost full recovery.
Ideally, we would like to construct a selector that achieves almost
full recovery for all sequences $(s_d, a_d)_{d\ge1}$ for which almost
full recovery is possible. We have seen in Section~\ref{sec:asymp}
that this
includes a much broader range of values than in
case of exact recovery.
Thus, using the adaptive selector of Theorem~\ref{thm:adaptE} for
almost full recovery
does not give a satisfactory result, and we have to take a different approach.\looseness=1

Following Section~\ref{sec:asymp}, we will use the notation
\[
a_0(s,A)\triangleq\sigma \bigl(2\log\bigl((d-s)/s\bigr) + A\sqrt{
\log \bigl((d-s)/s\bigr)} \bigr)^{1/2}.
\]
As shown in Section~\ref{sec:asymp}, it makes sense to consider the
classes $\Theta_d(s,a)$ only when $a\ge a_0(s,A)$ with some $A>0$,
since for other values of $a$ almost full recovery is impossible. Only
such classes will be studied below.

In the asymptotic setup of Section~\ref{sec:asymp}, we have used the
assumption that $d/s_d\to\infty$ (the sparsity assumption), which is
now transformed into the condition
%
\begin{equation}
\label{smax}
\begin{aligned}
&s_d\in{\mathcal S}_d\triangleq\bigl\{1,2,
\dots,s^*_d\bigr\}\\
&\qquad  \mbox{where $s^*_d$ is an integer
such that } \frac{d}{s^*_d}\to\infty\mbox{ as } d\to\infty.
\end{aligned}
\end{equation}
Assuming $s_d$ to be known, we have shown in Section~\ref{sec:asymp}
that almost full recovery is achievable for all $a\ge a_0(s_d,A_d)$,
where $A_d$ tends to infinity as $d\to\infty$. The rate of growth of
$A_d$ was allowed to be arbitrarily slow there; cf. Theorem~\ref
{thm:asymp}. However, for adaptive estimation considered in this
section we will need the following mild assumption on the growth of $A_d$:
%
\begin{equation}
\label{a_d} A_d\ge c_0 \biggl(\log\log \biggl(
\frac{d}{s^*_d} -1 \biggr) \biggr)^{1/2}, 
\end{equation}
where $c_0>0$ is an absolute constant. In what follows, we will assume
that $s^*_d\le d/4$, so that the right-hand side of \eqref{a_d} is
well defined.

Consider a grid of points $\{g_1, \dots, g_M\}$ on ${\mathcal S}_d$,
where $g_j=2^{j-1}$ and $M$ is the maximal integer such that $g_M\le
s^*_d$. For each $g_m$, $m=1,\dots,M$, we define a selector:
\[
\hat\eta(g_m) = \bigl(\hat\eta_j (g_m)
\bigr)_{j=1,\dots,d} \triangleq \bigl( I \bigl( \llvert X_j \rrvert
\geq w(g_m)\bigr) \bigr)_{j=1,\dots,d},
\]
where
\[
w(s)=\sigma\sqrt{2 \log \biggl(\frac{d}s - 1 \biggr)}.
\]
Note that $w(s)$ is monotonically decreasing.
We now choose the ``best'' index $m$, for which $g_m$ is near the true
(but unknown) value of $s$,
by the following data-driven procedure:
\begin{equation}
\label{def:hat:m} 
\begin{aligned}
\hat{m} &= \min \Biggl\{ m\in\{2,\dots,M\}:
\\
&\quad  \sum_{j=1}^d I \bigl(w(g_k)
\le \llvert X_j \rrvert < w(g_{k-1}) \bigr) \leq\tau
g_k \mbox{ for all } k\ge m +1\Biggr\},
\end{aligned}
\end{equation}
where
\[
\tau= \bigl( \log \bigl({d}/{s^*_d} -1 \bigr) \bigr)^{-\frac{1}{7}},
\]
and we set $\hat{m} =M$ if the set in \eqref{def:hat:m} is empty.
Finally, we define an adaptive selector as
\[
\hat\eta^{\mathrm{ad}}= \hat\eta(g_{\hat{m}}).
\]
This adaptive procedure is quite natural in the sense that it can be
related to the Lepski method or to wavelet thresholding that are widely
used for adaptive estimation. Indeed, as in wavelet methods, we
consider dyadic blocks determined by the grid points $g_j$. The value
$\sum_{j=1}^d I  (w(g_k)\le \llvert  X_j \rrvert <
w(g_{k-1}) )$ is the number of observations within the $k$th block.
If this number is too small (below a suitably chosen threshold), we
decide that the block corresponds to pure noise and it is rejected; in
other words, this $k$ is not considered as a good candidate for
$\hat{m}$. This argument is analogous to wavelet thersholding. We
start from the largest $k$ [equivalently, smallest $w(g_k)$] and
perform this procedure until we find the first block, which is not
rejected. The corresponding value $k$ determines our choice of
$\hat{m}$ as defined in \eqref{def:hat:m}.

\begin{theorem}\label{th:adapt}
Let $ {c_0\ge16}$. Then the selector $\hat\eta^{\mathrm{ad}}$
adaptively achieves almost
full recovery in the following sense:
%
\begin{equation}
\label{eq:th:adapt} \lim_{d\to\infty} \sup_{\theta\in\Theta_d(s_d,a_d)}
\frac{1}{s_d} \mathbf{E}_\theta \bigl\llvert \hat
\eta^{\mathrm{ad}}- \eta \bigr\rrvert = 0
\end{equation}
for all sequences $(s_d,a_d)_{d\ge1}$ such that \eqref{smax} holds
and $a_d\ge a_0(s_d,A_d)$, where $A_d$ satisfies \eqref{a_d}.
\end{theorem}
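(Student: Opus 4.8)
Put $S=S(\theta)$, $s=s_d$ and $\mathcal L:=\log(d/s^\ast_d-1)\to\infty$ (so $\tau=\mathcal L^{-1/7}$), and let $m^\ast$ be the smallest index with $g_{m^\ast}\ge s$, so $s\le g_{m^\ast}<2s$. Since $a_0(s,\cdot)$ is increasing, I may assume $A_d=c_0(\log\mathcal L)^{1/2}$, the smallest value admitted by \eqref{a_d}. Using that $\ee_j=I(|X_j|\ge w(g_{\widehat m}))$, decompose the loss into false negatives and false positives,
\[
|\ee-\eta|=\sum_{j\in S}I\big(|X_j|<w(g_{\widehat m})\big)+\sum_{j\notin S}I\big(|X_j|\ge w(g_{\widehat m})\big)=:\mathrm{FN}+\mathrm{FP}.
\]
The plan is to show $\E_\theta\mathrm{FN}=o(s)$ and $\E_\theta\mathrm{FP}=o(s)$, uniformly over the admissible pairs $(s_d,a_d)$, by comparing the data-driven threshold $w(g_{\widehat m})$ with the oracle threshold $w(s)$. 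The two inputs I would use repeatedly are the Gaussian tail bounds \eqref{AS} and the fact that $a_d\ge a_0(s,A_d)$ forces $(a_d-w(s))/\sigma\gtrsim A_d\to\infty$; because $c_0\ge16$, this moreover makes $\Phi\big(-(a_d-w(s))/\sigma\big)$ a large negative power of $\mathcal L$ (at most $\mathcal L^{-8}$), far below $\tau$, which will render the signal coordinates negligible in every block count.

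The bound on $\mathrm{FN}$ I would argue deterministically. If $g_{\widehat m}\ge g_{m^\ast}$ then $w(g_{\widehat m})\le w(g_{m^\ast})$, hence $\mathrm{FN}\le\#\{j\in S:|X_j|<w(g_{m^\ast})\}$. If instead $g_{\widehat m}<g_{m^\ast}$, i.e.\ $\widehat m<m^\ast$, then every $j\in S$ counted in $\mathrm{FN}$ with $|X_j|\ge w(g_{m^\ast})$ lies in one of the blocks $\widehat m+1,\dots,m^\ast$, all of which are light by definition of $\widehat m$; since $\sum_{k=2}^{m^\ast}g_k<2g_{m^\ast}<4s$ this yields $\mathrm{FN}\le\#\{j\in S:|X_j|<w(g_{m^\ast})\}+4\tau s$. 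In either case, using $|\theta_j|\ge a_d$ and $w(g_{m^\ast})\le w(s)\le a_d$ for $j\in S$,
\[
\E_\theta\,\mathrm{FN}\le s\,\Phi\big(-(a_d-w(s))/\sigma\big)+4\tau s=o(s).
\]

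For $\mathrm{FP}$ everything reduces to controlling $g_{\widehat m}$ from above. Put $B_d:=4\mathcal L^{1/7}$, so $\tau B_d=4$ and $B_d s^\ast_d=o(d)$ by \eqref{smax}, and split $\E_\theta\mathrm{FP}=\E_\theta[\mathrm{FP}\,I(g_{\widehat m}\le B_d s)]+\E_\theta[\mathrm{FP}\,I(g_{\widehat m}>B_d s)]$. On $\{g_{\widehat m}\le B_d s\}$ one has $w(g_{\widehat m})\ge w(B_d s)$, so $\mathrm{FP}\le\sum_{j\notin S}I(|X_j|\ge w(B_d s))$ and \eqref{AS} gives $\E_\theta[\mathrm{FP}\,I(g_{\widehat m}\le B_d s)]\le(d-s)\,2\Phi(-w(B_d s)/\sigma)\lesssim B_d s/\sqrt{\mathcal L}=4s\,\mathcal L^{-5/14}=o(s)$. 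On $\{g_{\widehat m}>B_d s\}$, set $\mathcal M:=\{m:g_m>B_d s\}$, whose smallest element is $\ge3$. For $m\in\mathcal M$ the event $\{\widehat m=m\}$ forces block $m-1$ to be heavy, and at most $s$ of the coordinates in it are signals, so $\mathrm{FP}(g_{m-1}):=\sum_{j\notin S}I(|X_j|\ge w(g_{m-1}))>\tau g_{m-1}-s\ge\tau g_{m-1}/2=:\lambda_{m-1}$ (using $\tau g_{m-1}>\tau B_d s/2=2s$); moreover block $m$ is light on $\{\widehat m=m\}$, so $\mathrm{FP}(g_m)\le\mathrm{FP}(g_{m-1})+\tau g_m$. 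Since the events $\{\widehat m=m\}$ are disjoint,
\[
\E_\theta[\mathrm{FP}\,I(g_{\widehat m}>B_d s)]\le\sum_{m\in\mathcal M}\Big(\E_\theta\big[\mathrm{FP}(g_{m-1})\,I(\mathrm{FP}(g_{m-1})>\lambda_{m-1})\big]+\tau g_m\,\Pb_\theta\big(\mathrm{FP}(g_{m-1})>\lambda_{m-1}\big)\Big).
\]
Here $\mathrm{FP}(g_{m-1})\sim\mathrm{Bin}\big(d-s,\,2\Phi(-w(g_{m-1})/\sigma)\big)$ with mean $\mu_{m-1}\lesssim g_{m-1}/\sqrt{\mathcal L}$ by \eqref{AS}, so $\lambda_{m-1}/\mu_{m-1}\gtrsim\tau\sqrt{\mathcal L}=\mathcal L^{5/14}\to\infty$; a Chernoff bound together with Stirling's formula then bounds the $m$-th summand by a constant times $g_m\,\mathcal L^{-c'\lambda_{m-1}}$ for some $c'>0$, and since $\lambda_{m-1}=\tau g_{m-1}/2$ doubles along $\mathcal M$ with smallest value in $(s/2,s]$, the series sums to $o(s)$ once the scales are substituted. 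Adding the three estimates gives $\sup_\theta s_d^{-1}\E_\theta|\ee-\eta|\to0$, uniformly, because $w(\cdot)$, $\tau$ and $B_d$ depend only on $d$, $s^\ast_d$ and $\sigma$.

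The hard part is the overshoot term in the last step, and what makes it delicate is precisely that $s_d$ may be as small as $1$. One cannot afford the crude bound $\mathrm{FP}\le d$ there: it would require $\Pb_\theta(g_{\widehat m}>B_d s_d)=o(s_d/d)$, which the block-count concentration does not deliver for the lowest blocks, where a block becomes ``heavy'' as soon as it contains a single extra observation. The remedy is to keep the genuine $\mathrm{FP}$ factor and to use that in the Chernoff estimate $\lambda_{m-1}=\tau g_{m-1}/2\ge s_d\ge1$ is never below $1$, after which the decay is double-exponential along the dyadic blocks and the sum converges. The remaining work is bookkeeping the three logarithmic scales — $\tau=\mathcal L^{-1/7}$, the cut-off $B_d=4\mathcal L^{1/7}$, and the Chernoff gap $\asymp\mathcal L^{5/14}$ — and verifying they combine to a negative power of $\mathcal L$; the generous choices $c_0\ge16$ in \eqref{a_d} and the exponent $1/7$ leave room to spare, and any exponent below $1/6$ would serve equally well.
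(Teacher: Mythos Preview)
Your proposal is correct and follows a genuinely different route from the paper's argument.

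The paper decomposes the risk according to the \emph{position of $\widehat m$} relative to the oracle index $m_0$ (defined essentially as your $m^\ast$): the piece $I_1$ on $\{\widehat m\le m_0\}$ is handled exactly as in your FN bound, by telescoping over the light blocks between $\widehat m$ and $m_0$; the piece $I_2$ on $\{\widehat m>m_0\}$ is treated by Cauchy--Schwarz, $I_2\le s^{-1}\sum_{m>m_0}(\E_\theta|\hat\eta(g_m)-\eta|)^{1/2}(\Pb_\theta(\widehat m=m))^{1/2}$, after which $\Pb_\theta(\widehat m=m)$ is bounded block by block via Bernstein's inequality (each block count has mean $O(g_\ell/\sqrt{\mathcal L})$, to be compared with the threshold $\tau g_\ell$), and the Hamming risk $\E_\theta|\hat\eta(g_m)-\eta|$ is controlled through a separate lemma (the paper's Lemma~\ref{lem1}) giving exactly your estimate $\Phi(-(a_d-w(s))/\sigma)\le\mathcal L^{-1/2}$. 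You instead decompose by \emph{false negatives versus false positives}, introduce the intermediate scale $B_d s$ to isolate moderate overshoot, and on $\{g_{\widehat m}>B_d s\}$ exploit the event inclusion $\{\widehat m=m\}\subset\{\mathrm{FP}(g_{m-1})>\lambda_{m-1}\}$ directly, keeping the FP factor and applying a Chernoff bound to the binomial $\mathrm{FP}(g_{m-1})$. The paper's Cauchy--Schwarz step is cleaner and requires no buffer scale $B_d$; your argument is more hands-on but avoids Cauchy--Schwarz altogether and makes explicit why small $s_d$ (even $s_d=1$) is harmless---namely, because $\lambda_{m-1}\ge s_d\ge1$ guarantees at least one full factor of $\mathcal L^{-5/14}$ in the Chernoff estimate, which beats the $\mathcal L^{1/7}$ coming from $g_{m_1}\asymp B_d s$. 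Both routes rest on the same two ingredients: the light-block telescoping for the undershoot, and the gap $a_0(s,A_d)-w(s)\gtrsim\sqrt{\log\mathcal L}$ (your $\mathcal L^{-8}$ and the paper's $\mathcal L^{-1/2}$ are the same estimate; the paper simply quotes less than it proves). One minor point to clean up: when $\widehat m=M$ because the defining set is empty, it is block $M$ rather than block $M-1$ that is forced to be heavy; your Chernoff argument goes through unchanged with $m-1$ replaced by $m$ in that corner case.
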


\begin{remark}
Another family of variable selection methods originates from the theory
of multiple testing \cite{ab,abdj}. These are, for example, the Benjamini--Hochberg,
Benjamini--Yekutieli or SLOPE procedures. We refer to \cite{Bogdan}
for a recent overview and comparison of these techniques. They have the
same structure as the exhaustive search procedure in that they keep
only the largest order statistics. The difference is that the value $s$
(which is usually not known in practice) is replaced by an estimator
$\hat s$ obtained from comparing the $i$th order statistic of $( \llvert  X_1 \rrvert ,\dots, \llvert  X_d \rrvert )$ with a
suitable normal quantile depending on $i$. The analysis of these
methods in the literature is focused on the evaluation of false
discovery rate (FDR). Asymptotic power calculations for the
Benjamini--Hochberg procedure are given in \cite{Arias-Castro}. To the
best of our knowledge, the behavior of the risk $\mathbf{P}_\theta
(\tilde S
\neq S(\theta))$ and of the Hamming risk, even in a simple
consistency perspective, was not studied.\looseness=1
\end{remark}

\begin{remark}In this paper, the variance $\sigma$ was supposed to be
known. Extension to the case of unknown $\sigma$ can be treated as
described, for example, in \cite{collier_etal2016}. Namely, we replace
$\sigma$ in the definition of the threshold $w(s)$ by a statistic
$\hat\sigma$ defined in \cite{collier_etal2016}, Section~3. As shown
in \cite{collier_etal2016}, Proposition~1, this statistic is such that
$\sigma\le\hat\sigma\le C'\sigma$ with high probability provided
that $s\le d/2$, and $d\ge d_0$ for some absolute constants $C'>1,
d_0 \geq1$. Then, replacing $\sigma$ by $\hat\sigma$ in the
expression for $w(s)$, one can show that Theorem~\ref{th:adapt}
remains valid with this choice of $w(s)$ independent of $\sigma$, up
to a change in numerical constants in the definition of the adaptive
procedure. With this modification, we obtain a procedure which is
completely data-driven and enjoys the property of almost full recovery
under the mild conditions given in Theorem~\ref{th:adapt}. The same
modification can be done in Theorem~\ref{thm:adaptE}. Namely, under
the assumptions of Theorem~\ref{thm:adaptE} and $a_d\ge c' a_d^*$,
where $c'\ge1$ is a numerical constant, the selector $\hat\eta$
defined by \eqref{selector} with threshold $t = \hat\sigma\sqrt{2
\log d}$ achieves exact recovery when $\sigma$ is unknown.\looseness=1
\end{remark}
%
\begin{remark}
In this section, the problem of adaptive variable selection was
considered only for the classes $\Theta_d(s_d,a_d)$. The corresponding
results for classes $\Theta_d^+ (s_d,a_d)$ and $\Theta_d^- (s_d,a_d)$
are completely analogous.
We do not state them here for the sake of brevity.
\end{remark}

%

\begin{appendix}
\section*{Appendix}\label{app:Appendix}

\begin{proof}[Proof of Theorem~\ref{nonasymptotic}]
We have, for any $t>0$,
\begin{eqnarray*}
\llvert \hat\eta- \eta \rrvert & = & \sum_{j: \eta_j=0}
\hat\eta_j +\sum_{j: \eta_j=1} (1- \hat
\eta_j )
\\
&=& \sum_{j: \eta_j=0} I\bigl( \llvert \sigma
\xi_j \rrvert \geq t \bigr) +\sum_{j: \eta_j=1}
I \bigl( \llvert \sigma\xi_j + \theta_j \rrvert < t
\bigr).
\end{eqnarray*}
Now, for any $\theta\in\Theta_d(s,a)$ and any $t>0$,
\begin{eqnarray*}
 \mathbf{E} \bigl(I \bigl( \llvert \sigma\xi_j +
\theta_j \rrvert < t \bigr) \bigr) &\leq&\mathbf{P}\bigl( \llvert
\theta_j \rrvert - \llvert \sigma \xi _j \rrvert < t
\bigr) \leq\mathbf{P}\bigl( \llvert \xi \rrvert > (a - t)/\sigma\bigr)
\\
&=& \mathbf{P}\bigl( \llvert \xi \rrvert > (a - t)_+ /\sigma\bigr),
\end{eqnarray*}
where $\xi$ denotes a standard Gaussian random variable.
Thus, for any $\theta\in\Theta_d(s,a)$,
%
\begin{equation}
\label{term1} \frac{1}s \mathbf{E}_\theta \llvert \hat\eta-
\eta \rrvert \leq \frac{d-|S|}s  \mathbf{P}\biggl( \llvert
\xi \rrvert \geq\frac{t} \sigma\biggr) + \frac{|S|}{s} \mathbf{P} \biggl( \llvert \xi
\rrvert >\frac{(a- t)_+}\sigma \biggr) \leq 2\Psi(d,s,a).
\end{equation}
%
Indeed, for $t$ defined in~(\ref{threshold}), $t \geq (a-t)_+$ given that $s\leq d/2$. Here and in the sequel, $|S|$ denotes the cardinality of $S= S(\theta)$.
\end{proof}


\begin{proof}[Proof of Theorem~\ref{t2}]
Arguing as in the proof of
Theorem~\ref{nonasymptotic}, we obtain
\begin{eqnarray*}
\bigl\llvert \hat\eta^+ - \eta \bigr\rrvert &=& \sum
_{j: \eta_j=0} I(\xi_j\geq t ) +\sum
_{j: \eta_j=1} I (\sigma\xi_j + \theta_j < t
),
\end{eqnarray*}
and
$\mathbf{E} (I  ( \sigma\xi_j + \theta_j < t  )
 )
\le\mathbf{P}(\xi< (t -a)/\sigma)$.
Thus, for any $\theta\in\Theta_d^+(s,a)$,
\begin{equation*}
\label{term1_1} \frac{1}s \mathbf{E}_\theta \bigl\llvert \hat
\eta^+ - \eta \bigr\rrvert \leq \frac{d-|S|}s  \mathbf{P}(\xi
\geq t/\sigma) + \frac{|S|}{s} \mathbf{P}\bigl(\xi < (t -a)/\sigma\bigr) \leq \Psi_+(d,s,a),
\end{equation*}
by the monotonicity of $\Phi$ and the condition $s\leq d/2$.
\end{proof}


\begin{proof}[Proof of Theorem~\ref{thm:lb}]
%
%
We prove here the first inequality of Theorem~\ref{thm:lb}. Since $\tilde \eta_j$ depends only on $X_j$,
%
\begin{equation}
\label{eq:lb1}
 \mathbf{E}_\theta \llvert \tilde\eta- \eta \rrvert =
 \sum
_{j=1}^d \mathbf{E}_{j,\theta_j} \llvert
\tilde\eta_j - \eta_j \rrvert,
\end{equation}
where $\mathbf{E}_{j,\theta_j}$ is the expectation with respect to the distribution of $X_j$.

Let $\Theta'$ be the set of all $\theta$ in $\Theta_d^+(s,a)$ such
that $s$ components $\theta_j$ of $\theta$ are equal to $a$ and the
remaining $d-s$ components are 0. Denote by $ \llvert \Theta' \rrvert ={d\choose s}$ the cardinality of $\Theta'$. Then, for any $\tilde \eta\in{\mathcal T}$ we have
%
\begin{equation}
\label{eq:lb2}
\begin{aligned}
& \sup_{\theta\in\Theta_d^+(s,a)}\frac{1}s \mathbf{E}_\theta \llvert \tilde\eta- \eta \rrvert\\
&\qquad  \geq\frac{1}{s \llvert \Theta' \rrvert } \sum
_{\theta\in
\Theta'} \sum_{j=1}^d
\mathbf{E}_{j,\theta_j} \llvert \tilde \eta _j - \eta_j
\rrvert
\\
&\qquad =  \frac{1}{s \llvert \Theta' \rrvert } \sum_{j=1}^d
\biggl(\sum_{\theta\in\Theta':\theta_j=0} {\mathbf{E}}_{j,0} (\tilde
\eta _j) + \sum_{\theta\in\Theta':\theta_j=a} {
\mathbf{E}}_{j,a} (1-\tilde\eta_j ) \biggr)
\\
&\qquad =  \frac{1}{s} \sum_{j=1}^d
\biggl( \biggl(1-\frac{s}d \biggr) {\mathbf{E}}_{j,0} (\tilde
\eta_j) + \frac{s}{d} {\mathbf{E}}_{j,a} (1-\tilde
\eta_j ) \biggr)
\\
&\qquad  \geq \frac{d}{s} \inf_{T \in[0,1]} \biggl( \biggl(1-
\frac{s}d \biggr) {\mathbb E}_0 (T)+ \frac{s}{d} {
\mathbb E}_a (1-T) \biggr),
\end{aligned}
\end{equation}
where we have used that $ \llvert \{\theta\in\Theta':\theta_j=a\}
 \rrvert ={d-1\choose s-1}=s \llvert \Theta' \rrvert /d$.
In the last line of display \eqref{eq:lb2}, ${\mathbb E}_u$ is
understood as the expectation with respect to the distribution of
$X=u+\sigma\xi$, where $\xi\sim{\mathcal N}(0,1)$ and $\inf_{T \in
[0,1]}$ denotes the infimum over all $[0,1]$-valued statistics $T(X)$. Set
\[
L^*=\inf_{T \in[0,1]} \biggl( \biggl(1-\frac{s}d \biggr) {
\mathbb E}_0 (T)+ \frac{s}{d} {\mathbb E}_a (1-T)
\biggr)
\]
By the Bayesian version of the Neyman--Pearson lemma, the infimum here
is attained for $T=T^*$ given by
\[
T^*( X) = I \biggl( \frac{(s/d) \varphi_{\sigma} (X-a)}{(1-s/d)
\varphi_{\sigma}(X)} >1 \biggr)
\]
where $ \varphi_{\sigma}(\cdot)$ is the density of an ${\mathcal
N}(0,\sigma^2)$ distribution. Thus,
\begin{eqnarray*}
L^* &=& \biggl(1-\frac{s}d \biggr) \mathbf{P} \biggl(
\frac{\varphi_{\sigma}
({\sigma}\xi-a)}{ \varphi_{\sigma}({\sigma}\xi)} > \frac{d}s - 1 \biggr) + \frac{s}d \mathbf{P}
\biggl( \frac{ \varphi_{\sigma}
({\sigma}\xi)}{ \varphi_{\sigma}({\sigma}\xi+a)} \leq\frac{d}s - 1 \biggr).
\end{eqnarray*}
Combining this with \eqref{eq:lb1} and \eqref{eq:lb2}, we get
\begin{align*}
&\inf_{\tilde\eta} \sup_{\theta\in\Theta_d^+(s,a)}
\frac{1}s \mathbf{E}_\theta \llvert \tilde\eta- \eta \rrvert
\nonumber
\\
&\qquad  \geq \biggl( \frac{d}s - 1 \biggr) \mathbf{P} \biggl( \exp \biggl(
\frac
{a\xi}{\sigma} - \frac{a^2}{2\sigma^2} \biggr) > \frac{d}s - 1 \biggr)\\
&\qquad \quad {} +
\mathbf{P} \biggl( \exp \biggl(\frac{a\xi}{\sigma} + \frac{a^2}{2\sigma^2} \biggr)
\leq\frac{d}s - 1 \biggr)
\nonumber
\\
&\qquad  = \biggl( \frac{d}s - 1 \biggr) \mathbf{P} \biggl( \xi>
\frac
{a}{2\sigma} + \frac{\sigma} a \log \biggl( \frac{d}s - 1 \biggr)
\biggr)\\
&\qquad \quad {} + \mathbf{P} \biggl( \xi\leq-\frac{a}{2\sigma} + \frac{\sigma} a \log
\biggl( \frac{d}s - 1 \biggr) \biggr)
\nonumber
\\
&\qquad  = \Psi_+(d,s,a).
\end{align*}
\end{proof}


%
%


\begin{proof}[Proof of Theorem~\ref{t1}]
For any $\theta\in\Theta_d(s,a)$, we have
%
\begin{equation}
\label{t1:1}
\begin{aligned}
\mathbf{E}_\theta \llvert \bar\eta- \eta \rrvert & =
\sum_{j: \theta_j=0} \mathbf{P}_{j,0}(\bar
\eta_j=1) +\sum_{j: \theta_j\ge a}
\mathbf{P}_{j,\theta_j}(\bar\eta _j=0)\\
&\quad {} + \sum
_{j: \theta_j\le-a} \mathbf{P}_{j,\theta_j}(\bar\eta_j=0)
\\
& \leq d\mathbf{P} \biggl( e^{- \frac{a^2}{2\sigma^2} }\cosh \biggl(\frac
{a\xi}{\sigma}
\biggr) > \frac{d}s - 1 \biggr)
\\
&\quad {}+ \sum_{j: \theta_j\ge a} \mathbf{P}_{j,\theta_j}(\bar
\eta_j=0) + \sum_{j: \theta_j\le-a}
\mathbf{P}_{j,\theta_j}(\bar\eta _j=0),
\end{aligned}
\end{equation}
where $\mathbf{P}_{j,\theta_j}$ denotes the distribution of $X_j$,
and $\xi
$ is a standard Gaussian random variable.
We now bound from above the probabilities $\mathbf{P}_{j,\theta
_j}(\bar\eta_j=0)$. Introduce the notation
\[
g(x) = \cosh \biggl(\frac{(x+ \sigma\xi)a }{\sigma^2} \biggr) \qquad\forall x\in\mathbb R,
\]
and
\[
u= \exp \biggl(\frac{a^2}{2 \sigma^2} + \log \biggl( \frac{d}s - 1 \biggr)
\biggr).
\]
We have
\[
\mathbf{P}_{j,\theta_j}(\bar\eta_j=0) = \mathbf{P}\bigl(g(\theta
_j)< u\bigr) = \mathbf{P} (-b-\theta_j < \sigma\xi< b-
\theta_j ),
\]
where $b= (\sigma^2/a)\operatorname{arccosh}(u)>0$. It is easy to check that
the function $x\mapsto\mathbf{P} (-b-x < \sigma\xi< b-x
)$ is
monotonically decreasing on $[0,\infty)$. Therefore, the maximum of
$\mathbf{P} (-b-\theta_j < \sigma\xi< b-\theta_j )$ over
$\theta_j\ge a$ is attained at $\theta_j=a$.
Thus, for any $\theta_j\ge a$ we have
%
\begin{equation}
\label{t1:2} \mathbf{P}_{j,\theta_j}(\bar\eta_j=0) \le
\mathbf{P}\bigl(g(a)< u\bigr) = \mathbf{P} \biggl(e^{- \frac{a^2}{2\sigma^2} }\cosh \biggl(
\frac{(a + \sigma\xi)a}{\sigma^2} \biggr)< \frac{d}s - 1 \biggr).
\end{equation}
Analogously, for any $\theta_j\le-a$,
%
\begin{equation}
\label{t1:3}
\begin{aligned}
\mathbf{P}_{j,\theta_j}(\bar\eta_j=0) & \le
\mathbf{P} \biggl(e^{- \frac
{a^2}{2\sigma^2} }\cosh \biggl(\frac{(-a + \sigma\xi)a }{\sigma
^2} \biggr)<
\frac{d}s - 1 \biggr)
\\
& = \mathbf{P} \biggl(e^{- \frac{a^2}{2\sigma^2} }\cosh \biggl(\frac
{(a +
\sigma\xi)a }{\sigma^2} \biggr)<
\frac{d}s - 1 \biggr),
\end{aligned}
\end{equation}
where the last equality follows from the fact that $\xi$ has the same
distribution as $-\xi$ and $\cosh$ is an even function.
Combining \eqref{t1:1}--\eqref{t1:3} proves the theorem.
\end{proof}


\begin{proof}[Proof of Theorem~\ref{thm:lb2}]
We follow the lines of the proof of Theorem~\ref{thm:lb} with suitable
modifications.
%
%

Let $\Theta^+$ and $\Theta^-$ be the sets of all $\theta$ in $\Theta
_d(s,a)$ such that $d-s$ components $\theta_j$ of $\theta$ are equal
to $0$ and the remaining $s$ components are equal to $a$ (for $\theta
\in\Theta^+$) or to $-a$ (for $\theta\in\Theta^-$). For any $\tilde \eta\in{\mathcal T}$, we have
\[
\begin{aligned}
&\sup_{\theta\in\Theta_d(s,a)} \sum_{j=1}^d
\mathbf{E}_{j,\theta_j} \llvert \tilde \eta_j - \eta_j
\rrvert \\
&\qquad \ge\frac{1}2 \Biggl\{\sup_{\theta\in\Theta^+} \sum
_{j=1}^d \mathbf{E}_{j,\theta_j} \llvert \tilde
\eta_j - \eta_j \rrvert + \sup_{\theta\in\Theta^-}
\sum_{j=1}^d \mathbf{E}_{j,\theta_j}
\llvert \tilde \eta_j - \eta _j \rrvert \Biggr\}.
\end{aligned}
\]
As shown in the proof of Theorem~\ref{thm:lb}, for any $\tilde\eta\in {\mathcal T}$,
\begin{eqnarray}
\sup_{\theta\in\Theta^+} \sum_{j=1}^d
\mathbf{E}_{j,\theta_j} \llvert \tilde \eta_j - \eta_j
\rrvert &\ge& \sum_{j=1}^d \biggl(
\biggl(1-\frac{s}d \biggr) {\mathbf{E}}_{j,0} (\tilde
\eta_j) + \frac{s}{d} {\mathbf{E}}_{j,a} (1-\tilde
\eta_j ) \biggr).
\nonumber
\end{eqnarray}
Analogously,\vspace*{2pt}
\begin{eqnarray}
\sup_{\theta\in\Theta^-} \sum_{j=1}^d
\mathbf{E}_{j,\theta_j} \llvert \tilde \eta_j - \eta_j
\rrvert &\ge& \sum_{j=1}^d \biggl(
\biggl(1-\frac{s}d \biggr) {\mathbf{E}}_{j,0} (\tilde
\eta_j) + \frac{s}{d} {\mathbf{E}}_{j,-a} (1-\tilde
\eta_j ) \biggr).
\nonumber
\end{eqnarray}
From the last three displays, we obtain\vspace*{3pt}
\begin{eqnarray*}
\sup_{\theta\in\Theta_d(s,a)} \sum_{j=1}^d
\mathbf{E}_{j,\theta_j} \llvert \tilde \eta_j - \eta_j
\rrvert &\ge& \sum_{j=1}^d \biggl(
\biggl(1-\frac{s}d \biggr) {\mathbf{E}}_{j,0} (\tilde
\eta_j) + \frac{s}{d} {\bar{\mathbf{E}}}_{j} (1-
\tilde \eta_j ) \biggr),
\end{eqnarray*}
where ${\bar{\mathbf{E}}}_j$ is the expectation with respect to the measure
${\bar{\mathbf{P}}}_j = (\mathbf{P}_{j,a}+\mathbf{P}_{j,-a})/2$. It
follows that\vspace*{3pt}
%
\begin{eqnarray}
\label{eq:lb2_2} \sup_{\theta\in\Theta_d(s,a)} \sum_{j=1}^d
\mathbf{E}_{j,\theta_j} \llvert \tilde \eta_j - \eta_j
\rrvert & \geq& \inf_{T \in[0,1]} \bigl( (d-s) {\mathbb
E}_0 (T)+ s {\bar{\mathbb E} }(1-T) \bigr). 
\end{eqnarray}
Here, ${\mathbb E}_0$ denotes the expectation with respect to the
distribution of $X$ with density $\varphi_{\sigma}(\cdot)$,
${\bar{\mathbb E}}$ is the expectation with respect to the
distribution of $X$ with mixture density $\bar\varphi_{\sigma}(\cdot)=(\varphi_{\sigma}(\cdot+a)+\varphi_{\sigma}(\cdot-a))/2$,
and
$\inf_{T \in[0,1]}$ denotes the infimum over all $[0,1]$-valued
statistics $T(X)$. Recall that we denote by $ \varphi_{\sigma}(\cdot
)$ is the density of ${\mathcal N}(0,\sigma^2)$ distribution. Set\vspace*{3pt}
\[
\tilde L=\inf_{T \in[0,1]} \biggl( \biggl(1-\frac{s}d
\biggr) {\mathbb E}_0 (T)+ \frac{s}{d} {\bar{\mathbb E}} (1-T)
\biggr).
\]
By the Bayesian version of the Neyman--Pearson lemma, the infimum here
is attained for $T=\tilde T$ given by\vspace*{3pt}
\[
\tilde T( X) = I \biggl( \frac{(s/d) \bar\varphi_{\sigma}
(X)}{(1-s/d) \varphi_{\sigma}(X)} >1 \biggr).
\]
Thus,\vspace*{3pt}
%
\begin{equation}
\label{eq:lb21}
\begin{aligned}
\tilde L &= \biggl(1-\frac{s}d \biggr) \mathbf{P}
\biggl( \frac{\bar\varphi_{\sigma} ({\sigma}\xi)}{ \varphi_{\sigma}({\sigma}\xi)} > \frac{d}s - 1 \biggr) + \frac{s}{2d} {
\mathbb P}_a \biggl( \frac{ \bar\varphi
_{\sigma} (X)}{ \varphi_{\sigma}(X)} \leq\frac{d}s - 1
\biggr)
\\[2pt]
&\quad {}+ \frac{s}{2d} {\mathbb P}_{-a} \biggl( \frac{\bar\varphi
_{\sigma} (X)}{ \varphi_{\sigma}(X)}
\leq\frac{d}s - 1 \biggr)
\\[2pt]
&= \biggl(1-\frac{s}d \biggr)\mathbf{P} \biggl(
e^{- \frac{a^2}{2\sigma^2}}\cosh \biggl(\frac{a\xi}{\sigma} \biggr) > \frac{d}s - 1
\biggr)
\\[2pt]
&\quad {}+ \frac{s}{2d} {\mathbb P}_a \biggl( \frac{ \bar\varphi
_{\sigma} (X)}{ \varphi_{\sigma}(X)}
\leq\frac{d}s - 1 \biggr) + \frac{s}{2d} {\mathbb P}_{-a}
\biggl( \frac{ \bar\varphi
_{\sigma} (X)}{ \varphi_{\sigma}(X)} \leq\frac{d}s - 1 \biggr),
\end{aligned}
\end{equation}
where ${\mathbb P}_u$ denotes the probability distribution of $X$ with
density $\varphi_{\sigma}(\cdot-u)$.
Note that, for all $x\in\mathbb R$,
\[
\frac{ \bar\varphi_{\sigma} (x)}{ \varphi_{\sigma}(x)} = e^{-
\frac{a^2}{2\sigma^2} }\cosh \biggl(\frac{ax }{\sigma^2} \biggr).
\]
Using this formula with $x=\sigma\xi+a$ and $x=\sigma\xi-a$, and
the facts that $\cosh(\cdot)$ is an even function and $\xi$
coincides with $-\xi$ in distribution, we obtain
\begin{eqnarray*}
{\mathbb P}_a \biggl( \frac{ \bar\varphi_{\sigma} (X)}{ \varphi
_{\sigma}(X)} \leq\frac{d}s - 1
\biggr) &=& {\mathbb P}_{-a} \biggl( \frac{ \bar\varphi_{\sigma} (X)}{ \varphi
_{\sigma}(X)} \leq
\frac{d}s - 1 \biggr)
\\
&=& \mathbf{P} \biggl( e^{- \frac{a^2}{2\sigma^2} }\cosh \biggl(\frac
{a\xi}{\sigma}+
\frac{a^2}{\sigma^2} \biggr) \le\frac{d}s - 1 \biggr).
\end{eqnarray*}
Thus, $\tilde L= (s/d)\bar\Psi(d,s,a)$. Combining this equality
with \eqref{eq:lb1} and \eqref{eq:lb2_2} proves the theorem.
\end{proof}


\begin{proof}[Proof of Theorem~\ref{cor:recovery_pattern}]
The upper bounds \eqref{cor:recovery_pattern:eq1}, \eqref{cor:recovery_pattern:eqbar2}
 and \eqref{cor:recovery_pattern:eq2}
follow immediately from \eqref{risks} and Theorems \ref{t2}, \ref{t1}
and \ref{nonasymptotic}, respectively.
We now prove the lower bound
\eqref{cor:recovery_pattern:eq3}.
To this end, first note that for any $\theta\in\Theta_d^+(s,a)$ and
any $\tilde\eta\in\mathcal T$ we have
\[
 \mathbf{P}_\theta\bigl(S_{\tilde\eta} \ne S(\theta)
\bigr)= \mathbf{P}_\theta\Biggl(\bigcup_{j=1}^d \{\tilde
\eta_j \ne\eta_j\}\Biggr) = 1-\prod
_{j=1}^d p_j(\theta),
\]
where $p_j(\theta)\triangleq\mathbf{P}_\theta(\tilde\eta_j =
\eta
_j)$. Hence, for any $\tilde\eta\in\mathcal T$,
%
\begin{equation}
\label{cor:recovery_pattern:eq5} \sup_{\theta\in\Theta_d^+(s,a)} \mathbf{P}_\theta
\bigl(S_{\tilde
\eta} \ne S(\theta)\bigr) \ge\max_{\theta\in\Theta'}
\mathbf{P}_\theta \bigl(S_{\tilde\eta} \ne S(\theta)\bigr) = 1-p_*,
\end{equation}
where $\Theta'$ is the subset of $\Theta_d^+(s,a)$ defined in the
proof of Theorem~\ref{thm:lb}, and $p_*= \min_{\theta\in\Theta'}
\prod_{j=1}^d p_j(\theta)$.

Next, for any selector $\tilde\eta$ we have $\mathbf{P}_\theta
(S_{\tilde\eta} \ne S(\theta))\ge\mathbf{P}_\theta( \llvert
\tilde\eta- \eta \rrvert =1)$. Therefore,\looseness=1
%
\begin{eqnarray}
\label{cor:recovery_pattern:eq5a} \sup_{\theta\in\Theta_d^+(s,a)} \mathbf{P}_\theta
\bigl(S_{\tilde
\eta} \ne S(\theta)\bigr) &\ge& \frac{1}{ \llvert \Theta' \rrvert } \sum
_{\theta\in\Theta
'} \mathbf{P}_{\theta} \bigl( \llvert \tilde\eta-
\eta \rrvert =1\bigr).
\end{eqnarray}
Here,
$\mathbf{P}_{\theta} ( \llvert \tilde\eta- \eta \rrvert
=1)=\mathbf{P}_{\theta} (\bigcup_{j=1}^d B_j)$ with the random events
$B_j=\{ \llvert \tilde\eta_j - \eta_j \rrvert =1,   \text{ and }   \tilde\eta_i = \eta_i,  \forall  i\ne j\}$. Since
the events $B_j$ are disjoint, for any $\tilde\eta\in\mathcal T$
we get\looseness=1
%
\begin{equation}\label{cor:recovery_pattern:eq6}
\begin{aligned}
& \frac{1}{ \llvert \Theta' \rrvert } \sum_{\theta\in
\Theta'}
\mathbf{P}_{\theta} \bigl( \llvert \tilde\eta- \eta \rrvert =1\bigr)\\
&\qquad  =
\frac{1}{ \llvert \Theta' \rrvert } \sum_{\theta\in\Theta
'} \sum
_{j=1}^d \mathbf{P}_{\theta}
(B_j)
\\
&\qquad = \frac{1}{ \llvert \Theta' \rrvert } \sum_{j=1}^d
\biggl(\sum_{\theta\in\Theta':\theta_j=0} {\mathbf{P}}_{j,0} (
\tilde \eta_j=1) \prod_{i\ne j}
p_i(\theta)\\
&\qquad \quad {} + \sum_{\theta\in\Theta
':\theta_j=a} {
\mathbf{P}}_{j,a} (\tilde\eta_j=0) \prod
_{i\ne j} p_i(\theta) \biggr)
\\
&\qquad \ge \frac{p_*}{ \llvert \Theta' \rrvert } \sum_{j=1}^d
\biggl(\sum_{\theta\in\Theta':\theta_j=0} {\mathbf{P}}_{j,0} (
\tilde \eta_j=1) + \sum_{\theta\in\Theta':\theta_j=a} {
\mathbf{P}}_{j,a} (\tilde\eta_j=0) \biggr)
\\
&\qquad =  \frac{p_*}{ \llvert \Theta' \rrvert } \sum_{j=1}^d
\biggl(\sum_{\theta\in\Theta':\theta_j=0} {\mathbf{E}}_{j,0} (
\tilde \eta_j) + \sum_{\theta\in\Theta':\theta_j=a} {
\mathbf{E}}_{j,a} (1-\tilde\eta_j) \biggr),
\end{aligned}
\end{equation}
where ${\mathbf{P}}_{j,u}$ denotes the distribution of $X_j$ when
$\theta_j=u$.
We now bound the right-hand side of \eqref{cor:recovery_pattern:eq6}
by following the argument from the last three lines of \eqref{eq:lb2}
to the end of the proof of Theorem~\ref{thm:lb}. Applying this
argument yields that, for any $\tilde\eta\in\mathcal T$,
%
\begin{eqnarray}
\label{cor:recovery_pattern:eq7} \frac{1}{ \llvert \Theta' \rrvert } \sum_{\theta\in\Theta
'}
\mathbf{P}_{\theta} \bigl( \llvert \tilde\eta- \eta \rrvert =1\bigr)&\ge&
p^*d \tilde L \ge p^*s \Psi_+(d,s,a).
\end{eqnarray}
Combining \eqref{cor:recovery_pattern:eq5}, \eqref
{cor:recovery_pattern:eq5a} and \eqref{cor:recovery_pattern:eq7}, we
find that, for any $\tilde\eta\in\mathcal T$,
\[
\begin{aligned}
\sup_{\theta\in\Theta_d^+(s,a)} \mathbf{P}_\theta\bigl(S_{\tilde
\eta}
\ne S(\theta)\bigr) &\ge\min_{0\le p^*\le1}\max\bigl\{1-p^*, p^*s \Psi
_+(d,s,a)\bigr\}\\
&=\frac{s\Psi_+(d,s,a)}{1+s\Psi_+(d,s,a)}.
\end{aligned}
\]

We now prove the lower bound \eqref{cor:recovery_pattern:eqbar3}. Let
the sets $\Theta^+$ and $\Theta^-$ and the constants $p_j(\theta)$
be the same as in the proof of Theorem~\ref{thm:lb2}. Then
\begin{equation*}
\label{eq:newlb1} \sup_{\theta\in\Theta_d(s,a)} \mathbf{P}_\theta
\bigl(S_{\tilde
\eta} \ne S(\theta)\bigr) \ge\max_{\theta\in\Theta^+ \cup\Theta^-}
\mathbf {P}_\theta \bigl(S_{\tilde\eta} \ne S(\theta)\bigr) = 1-\bar p,
\end{equation*}
where $\bar p = \min_{\theta\in\Theta^+ \cup\Theta^-}\prod_{j=1}^d p_j(\theta)$.

For any selector $\tilde\eta$, we use that $\mathbf{P}_\theta
(S_{\tilde\eta} \ne S(\theta))\ge\mathbf{P}_\theta( \llvert
\tilde\eta- \eta \rrvert =1)$ and, therefore,
\[
\begin{aligned}
\sup_{\theta\in\Theta_d(s,a)} \mathbf{P}_\theta\bigl(S_{\tilde
\eta}
\ne S(\theta)\bigr) &\ge \frac{1}{2 \llvert \Theta^+ \rrvert } \sum_{\theta\in\Theta
^+}
\mathbf{P}_{\theta} \bigl( \llvert \tilde\eta- \eta \rrvert =1\bigr) \\
&\quad {}+
\frac{1}{2 \llvert \Theta^- \rrvert } \sum_{\theta\in
\Theta^-} \mathbf{P}_{\theta}
\bigl( \llvert \tilde\eta- \eta \rrvert =1\bigr).
\end{aligned}
\]
We continue along the same lines as in the proof of \eqref
{cor:recovery_pattern:eq6} to get, for any separable selector $\tilde
\eta$,
\[
\begin{aligned}
&\sup_{\theta\in\Theta_d(s,a)} \mathbf{P}_\theta\bigl(S_{\tilde
\eta}
\ne S(\theta)\bigr)\\
&\qquad \ge \frac{\bar p}{2  \llvert \Theta^+ \rrvert } \sum_{j=1}^d
\biggl( \sum_{\theta\in\Theta^+: \theta_j=0} \mathbf {E}_{j,0}(
\tilde \eta_j) + \sum_{\theta\in\Theta^+: \theta_j=a} \mathbf{E}
_{j,a}(1-\tilde\eta_j) \biggr)
\\
&\qquad \quad {}+ \frac{\bar p}{2  \llvert \Theta^- \rrvert } \sum_{j=1}^d
\biggl( \sum_{\theta\in\Theta^-: \theta_j=0} \mathbf{E} _{j,0}(\tilde
\eta_j) + \sum_{\theta\in\Theta^-: \theta_j=-a}
\mathbf{E}_{j,-a}(1-\tilde\eta_j) \biggr)
\\
&\qquad  \geq \frac{\bar p}2 \sum_{j=1}^d
\biggl( \biggl( 1- \frac{s}d \biggr) \mathbf{E}_{j,0} (\tilde
\eta_j) + \frac{s}d \mathbf{E}_{j, a}(1 - \tilde
\eta_j) \biggr)
\\
&\qquad \quad {}+ \frac{\bar p}2 \sum_{j=1}^d
\biggl( \biggl( 1- \frac{s}d \biggr) \mathbf{E}_{j,0} (\tilde
\eta_j) + \frac{s}d \mathbf{E}_{j, -a}(1 - \tilde
\eta_j) \biggr)
\\
&\qquad =  \bar p \sum_{j=1}^d \biggl( \biggl(
1- \frac{s}d \biggr) \mathbf{E} _{j,0} (\bar
\eta_j) + \frac{s}d \bar{\mathbf{E}}_{j}(1 -
\tilde\eta_j) \biggr),
\end{aligned}
\]
where again $\bar{\mathbf{E}}_j$ denotes the expected value with
respect to
$\bar{\mathbf{P}}_j = \frac{1}2 (\mathbf{P}_{j,a}+ \mathbf{P}_{j,
-a})$. Analogously to the
proof of Theorem~\ref{thm:lb2}, the expression in the last display can
be further bounded from below by
$\bar p d \tilde L = \bar p s \bar\Psi(d,s,a)$. Thus,
\[
\begin{aligned}
\sup_{\theta\in\Theta_d(s,a)} \mathbf{P}_\theta\bigl(S_{\tilde
\eta}
\ne S(\theta)\bigr) &\ge\min_{0\le\bar p \le1}\max\bigl\{1-\bar p, \bar p s
\bar\Psi(d,s,a)\bigr\}\\
&=\frac{s \bar\Psi(d,s,a)}{1+s \bar\Psi(d,s,a)}.
\end{aligned}
\]
\end{proof}

\begin{proof}[Proof of Theorem~\ref{t4}]
(i) It follows from the second inequality in \eqref{term1} that
%
\begin{equation}
\label{xx} \sup_{\theta\in\Theta_d(s,a)} \frac{1}s
\mathbf{E}_\theta \llvert \hat \eta- \eta \rrvert \leq 2 \biggl(
\frac{d}s - 1 \biggr) \Phi(-t/\sigma) + 2 \Phi \bigl(-(a-t)_+/\sigma
\bigr),
\end{equation}
where $t= \frac{a}2 + \frac{\sigma^2}a \log ( \frac{d}s -
1 )$ is the threshold (\ref{threshold}).
Since $a^2 \geq2 \sigma^2\log(d/s - 1)$ we get that $a\ge t$ and
that $t> a/2$, which is equivalent to $t > a-t$.
Furthermore, $ (\frac{d}s -1 )e^{-t^2/(2\sigma^2)} =
e^{-(a-t)^2/(2\sigma^2)} $. These remarks and \eqref{AS} imply that
\begin{eqnarray*}
\biggl(\frac{d}s - 1 \biggr) \Phi(-t/\sigma) &\leq& \sqrt{
\frac{2}\pi} \frac{\exp(-(a-t)^2/(2\sigma
^2))}{(a-t)/\sigma+ \sqrt{(a-t)^2/\sigma^2 + 8/\pi}}
\\
&\leq& \frac{\exp(-(a-t)^2/(2\sigma^2))}{(a-t)/\sigma+ \sqrt
{(a-t)^2/\sigma^2 + 4}}
\\
&\leq& \sqrt{\frac{\pi}2} \Phi \biggl(- \frac{a-t}{\sigma} \biggr).
\end{eqnarray*}
Combining this with \eqref{xx}, we get
\begin{eqnarray*}
\sup_{\theta\in\Theta_d(s,a)} \frac{1}s \mathbf{E}_\theta
\llvert \hat \eta- \eta \rrvert &\leq& ( 2 + \sqrt{2 \pi}) \Phi \biggl(-
\frac{a-t}{\sigma} \biggr).
\end{eqnarray*}
Now, to prove \eqref{eq:t4:1} it remains to note that under assumption
\eqref{ass:e},
\[
\frac{a-t}\sigma= \frac{a}{2\sigma} - \frac{\sigma}a \log \biggl(
\frac{d}s - 1 \biggr)= \frac{a^2-2\sigma^2\log((d-s)/s)}{2a
\sigma}\ge\Delta.
\]
Indeed, assumption \eqref{ass:e} states that $a\ge a_0\triangleq
\sigma (2\log((d-s)/s) + W )^{1/2}$, and
the function $a\mapsto ({a^2-2\sigma^2\log((d-s)/s)}
)/{a}$ is monotonically increasing in $a>0$. On the other hand,
%
\begin{equation}
\label{eq:t4:3} \bigl( {a_0^2-2\sigma^2\log
\bigl((d-s)/s\bigr)} \bigr)/(2{a_0}\sigma) = \Delta.
\end{equation}

(ii) We now prove \eqref{eq:t4:2}. By Theorem~\ref{thm:lb},
\[
\inf_{\tilde\eta} \sup_{\theta\in\Theta_d(s,a)} \frac{1}s
\mathbf{E} _\theta \llvert \tilde\eta- \eta \rrvert \geq
\frac{s'}s \Phi \biggl( - \frac{a} {2\sigma} + \frac{\sigma} a \log \biggl(
\frac{d}s - 1 \biggr) \biggr) - 4 \frac{s'}s \exp\left(-\frac{(s-s')^2}{2s} \right).
\]
Here,
\[
- \frac{a} {2\sigma} + \frac{\sigma} a \log \biggl( \frac{d}s - 1
\biggr)= \frac{2\sigma^2\log((d-s)/s)-a^2}{2\sigma a}.
\]
Observe that
the function $a\mapsto ({2\sigma^2\log((d-s)/s)-a^2}
)/{a}$ is monotonically decreasing in $a>0$
and that assumption (\ref{lowE}) states that $a\le a_0$. In view of
\eqref{eq:t4:3}, the value of its minimum for $a\le a_0$ is
equal to $- \Delta$.
The bound \eqref{eq:t4:2} now follows by the monotonicity of $\Phi
(\cdot)$.
\end{proof}

\begin{proof}[Proof of Theorem~\ref{thm:asymp}]
Assume without loss of
generality that $d$ is large enough to have $(d-s_d)/s_d>1$. We apply
Theorem~\ref{t4} with
$
W= A \sqrt{2\log((d-s_d)/s_d)}$.
Then
\[
\Delta^2= \frac{A^2 \sqrt{2\log((d-s_d)/s_d) }}{4 (\sqrt{2\log
((d-s_d)/s_d)}+A )}.
\]
By assumption, there exists $\nu>0$ such that $(2+\nu)s_d \le d$ for
all $d$ large enough. Equivalently, $d/s_d - 1 \geq1+\nu$ and,
therefore, using the monotonicity argument, we find
\[
\Delta^2\ge\frac{A^2 \sqrt{2\log(1+\nu) }}{\sqrt{2\log(1+\nu
)}+A}\to\infty\qquad\mbox{as } A\to\infty.
\]
This and \eqref{eq:t4:1} imply part (i) of the theorem.

Part (ii)
follows from \eqref{eq:t4:2} by noticing that 
$\Delta^2\le\sup_{x>0}\frac{ A^2x}{4(x+A)}=A^2/4$ for any fixed $A>0$.
Now, for $s$ large enough, let us put $s'=(1-\varepsilon)s$ for some $\varepsilon$ in (0,1), fixed. Thus, the lower bound of the risk becomes
\begin{equation*}
(1-\varepsilon) \Phi(-\Delta) - 4 \exp \left( -\frac s2 (1-\varepsilon)^2\right) >0,
\end{equation*}
for $s$ large enough.
\end{proof}

\begin{proof}[Proof of Theorem~\ref{thm:asympE}]
Throughout the proof, we assume without loss of generality that $d$ is
large enough to have $s_d\ge2$, and $(d-s_d)/s_d>1$.
Set $W_*(s)\triangleq4  (\log s + \sqrt{\log s \log(d-s)}
)$, and notice that
%
\begin{eqnarray}
\label{eq3a:thm:asympE} \frac{W_*(s_d)}{2 \sqrt{2 \log((d-s_d)/s_d) +W_*(s_d)}}&=&\sqrt {2\log s_d},
\\
\label{eq3b:thm:asympE} 2 \log\bigl((d-s_d)/s_d\bigr)
+W_*(s_d) &=& 2 \bigl(\sqrt{\log(d-s_d)} + \sqrt {\log
s_d} \bigr)^2.
\end{eqnarray}
If \eqref{eq1:thm:asympE} holds, we have $W_d\ge W_*(s_d)$ for all $d$
large enough. By the monotonicity of the quantity $\Delta$ defined in
(\ref{DeltaERec})
with respect to $W$, this implies
%
\begin{equation}
\label{eq4:thm:asymp}
\begin{aligned}
\Delta_d &\triangleq \frac{W_d}{2 \sqrt{2 \log((d-s_d)/s_d) +W_d}}
\\
&\ge\frac{W_*(s_d)}{2 \sqrt{2 \log((d-s_d)/s_d) +W_*(s_d)}}=\sqrt {2\log s_d} .
\end{aligned}
\end{equation}
Now, by Theorem~\ref{t4} and using (\ref{AS}) we may write
\begin{equation}\label{eq3:thm:asymp}
\begin{aligned}
\sup_{\theta\in\Theta_d(s_d,a_d)} \mathbf{E}_\theta \llvert \hat
\eta - \eta \rrvert &\leq (2+\sqrt{2\pi})s_d \Phi (-
\Delta_d )
\\
&\leq 3s_d \min \biggl\{1, \frac{1}{\Delta_d} \biggr\} \exp
\biggl(- \frac{\Delta_d^2}2 \biggr)
\\
 &= 3 \min \biggl\{1, \frac{1}{\Delta_d} \biggr\} \exp \biggl(-
\frac{\Delta_d^2-2\log s_d}2 \biggr).
\end{aligned}
\end{equation}
This and \eqref{eq4:thm:asymp} imply that, for all $d$ large enough,
\[
\sup_{\theta\in\Theta_d(s_d,a_d)} \mathbf{E}_\theta \llvert \hat\eta - \eta
\rrvert \le3 \min \biggl\{1, \frac{1}{\sqrt{2\log
s_d}} \biggr\}.
\]
Since $s_d\to\infty$, part (i) of the theorem follows.

We now prove part (ii) of the theorem. It suffices to consider $W_d>
0$ for all $d$ large enough since for nonpositive $W_d$ almost full
recovery is impossible and the result follows from part (ii) of
Theorem~\ref{thm:asymp}. If \eqref{eq2:thm:asymp} holds, there exists
$A<1$ such that $W_d\le AW_*(s_d)$ for all $d$ large enough. By the
monotonicity of the quantity $\Delta$ defined in (\ref{DeltaERec})
with respect to $W$ and in view of equation \eqref{eq3a:thm:asympE},
this implies
\begin{equation}\label{eq5:thm:asymp}
\begin{aligned}
& \Delta_d^2 - 2\log s_d
\\
&\qquad \le \frac{A^2W_*^2(s_d)}{4 (2 \log((d-s_d)/s_d) +AW_*(s_d))}\\
&\qquad \quad {}- \frac{W_*^2(s_d)}{4 (2 \log((d-s_d)/s_d) +W_*(s_d))}
\\
&\qquad = \frac{(A-1)W_*^2(s_d)(AW_*(s_d)+2 (A+1) \log((d-s_d)/s_d))}{4 (2
\log((d-s_d)/s_d) +AW_*(s_d))(2 \log((d-s_d)/s_d) +W_*(s_d))}
\\
&\qquad \le \frac{(A-1)AW_*^2(s_d)}{4 (2 \log((d-s_d)/s_d) +W_*(s_d))}
\\
&\qquad = \frac{2(A-1)A  (\log s_d + \sqrt{\log s_d \log(d-s_d)}
)^2}{ (\sqrt{\log(d-s_d)} + \sqrt{\log s_d}  )^2} = 2(A-1)A \log s_d,
\end{aligned}
\end{equation}
where we have used the fact that $A<1$ and equations \eqref
{eq3a:thm:asympE}, \eqref{eq3b:thm:asympE}.
Next, by Theorem~\ref{t4} and using (\ref{AS}), we have for $s'=s_d/2$,
\begin{eqnarray}
\nonumber
\inf_{\tilde{\eta}} \sup_{\theta\in\Theta_d(s_d,a_d)} \mathbf{E}
_\theta \llvert \tilde\eta- \eta \rrvert &\ge& \frac{s_d}2 \left( \Phi (-\Delta_d ) - 4 \exp \left(-\frac{s_d}8 \right) \right)
\end{eqnarray}
and
\begin{eqnarray}
\nonumber
\frac{s_d}2  \Phi (-\Delta_d ) & \ge & \frac{s_d}8 \min \biggl\{\frac{1}2,
\frac{1}{\Delta_d} \biggr\} \exp \biggl(- \frac{\Delta_d^2}2 \biggr)
\\
\nonumber
&=& \frac{1}8 \min \biggl\{\frac{1}2,
\frac{1}{\Delta_d} \biggr\} \exp \biggl(- \frac{\Delta_d^2-2\log s_d}2 \biggr).
\end{eqnarray}
Combining this inequality with \eqref{eq5:thm:asymp}, we find that,
for all $d$ large enough,
\[
\inf_{\tilde{\eta}}\sup_{\theta\in\Theta_d(s_d,a_d)} \mathbf{E}
_\theta \llvert \tilde\eta- \eta \rrvert \ge\frac{1}8 \min
\biggl\{\frac{1}2, \frac{1}{\Delta_d} \biggr\} e^{ (1-A)A \log
s_d } - 2 s_d e^{-s_d/8}.
\]
Since $A<1$ and $\Delta_d\le A\sqrt{2\log s_d}$ by \eqref
{eq5:thm:asymp}, the last expression tends to $\infty$ as $s_d\to
\infty$. This proves part (ii) of the theorem.
\end{proof}

\begin{proof}[Proof of Theorem~\ref{thm:adaptE}]
By \eqref{term1}, for any $\theta\in\Theta_d(s_d,a_d)$, and any
$t>0$ we have
\begin{equation*}
\label{term1**} \mathbf{E}_\theta \llvert \hat\eta- \eta \rrvert \leq d \mathbf{P}\bigl( \llvert \xi \rrvert \geq t/\sigma\bigr) +
s_d \mathbf{P}\bigl( \llvert \xi \rrvert >(a_d- t)_+/
\sigma\bigr),
\end{equation*}
where $\xi$ is a standard normal random variable. It follows that, for
any $a_d\ge a_d^*$, any $\theta\in\Theta_d(s_d,a_d)$, and any $t>0$,
\begin{equation*}
\label{term1*} \mathbf{E}_\theta \llvert \hat\eta- \eta \rrvert \leq d
\mathbf{P}\bigl( \llvert \xi \rrvert \geq t/\sigma\bigr) + s_d
\mathbf{P}\bigl( \llvert \xi \rrvert >\bigl(a_d^*- t\bigr)_+/\sigma
\bigr).
\end{equation*}
Without loss of generality assume that $d\ge6$ and $2\le s_d\le d/2$.
Then, using the inequality $\sqrt{x}-\sqrt{y} \le(x-y)/\sqrt{2y}$,
$\forall x>y>0$, we find that, for $t=\sigma\sqrt{2 \log d}$,
\begin{eqnarray*}
\bigl(a_d^* -t\bigr)_+/\sigma&\geq& \sqrt{2} \bigl(\sqrt{
\log(d-s_d)} -\sqrt{\log d }+ \sqrt{\log(s_d)} \bigr)
\\
&\geq& \sqrt{2\log(s_d)} - \log \biggl(\frac{d}{d-s_d} \biggr)/
\sqrt{\log (d-s_d)}
\\
&\geq& \sqrt{2\log(s_d)} - (\log2)/\sqrt{\log(d/2)} >0.
\end{eqnarray*}
From this we also easily deduce that, for $2\le s_d\le d/2$, we have
$((a_d^* -t)_+/\sigma)^2/2\ge\log(s_d) - \sqrt{2}\log2$.
Combining these remarks with \eqref{AS} and \eqref{phase1}, we find
\begin{eqnarray*}
\sup_{\theta\in\Theta_d(s_d,a_d)} \mathbf{E}_\theta \llvert \hat\eta - \eta
\rrvert &\leq& \frac{1}{\sqrt{2\log d }}+ \frac{s_d \exp (-\log(s_d) + \sqrt{2}\log2 )} {\sqrt
{2\log(s_d)}},
\end{eqnarray*}
which immediately implies the theorem by taking the limit as $d\to
\infty$.
\end{proof}


\begin{proof}[Proof of Theorem~\ref{th:adapt}]
Throughout the proof, we will write for brevity $s_d=s, a_d=a, A_d=A$,
and set $\sigma=1$. Since $\Theta_d(s,a)\subseteq\Theta
_d(s,a_0(s,A))$ for all $a\ge a_0(s,A)$, it suffices to prove that
%
\begin{equation}
\label{eq:th:adapt1} \lim_{d\to\infty} \sup_{\theta\in\Theta_d(s,a_0(s,A))}
\frac{1}{s} \mathbf{E}_\theta \bigl\llvert \hat
\eta^{\mathrm{ad}}- \eta \bigr\rrvert = 0.
\end{equation}
Here, $s\le s^*_d$ and recall that throughout this section we assume
that $s_d^*\le d/4$;
since we deal with asymptotics as $d/s^*_d\to\infty$,
the latter assumption is without loss of generality in the current proof.

If $s<g_M$, let $m_0\in\{2,\dots,M\}$ be the index such that
$g_{m_0}$ is the minimal element of the grid, which is greater than the
true underlying $s$. Thus, $g_{m_0}/2=g_{m_0-1} \leq s < g_{m_0}$. If
$s\in[g_M,s^*_d]$, we set $m_0=M$. In both cases,
%
\begin{equation}
\label{eq:th:adapt1_bis} s\ge g_{m_0}/2.
\end{equation}
We decompose the risk as follows:
\[
\frac{1}s \mathbf{E}_\theta \bigl\llvert \hat
\eta^{\mathrm{ad}}- \eta \bigr\rrvert = I_1 + I_2,
\]
where
\begin{eqnarray*}
I_1 & = & \frac{1}s \mathbf{E}_\theta \bigl( \bigl
\llvert \hat\eta (g_{\hat{m}}) - \eta \bigr\rrvert I(\hat{m} \leq
m_0) \bigr),
\\
I_2 &=& \frac{1}s \mathbf{E}_\theta \bigl( \bigl
\llvert \hat\eta (g_{\hat
{m}}) - \eta \bigr\rrvert I( \hat{m} \ge
m_0+1) \bigr).
\end{eqnarray*}
We now evaluate $I_1$. Using the fact that $\hat\eta_j(g_{m})$ is
monotonically increasing in $m$ and the definition of $\hat{m}$,
we obtain that, on the event $\{\hat{m} \leq m_0\}$,
\begin{eqnarray*}
\bigl\llvert \hat\eta(g_{\hat{m}}) - \hat\eta(g_{m_0}) \bigr
\rrvert &\le& \sum_{m=\hat{m} +1}^{m_0} \bigl\llvert
\hat\eta (g_{m}) - \hat\eta(g_{m-1}) \bigr\rrvert
\\
&=& \sum_{m=\hat{m}+1}^{m_0} \sum
_{j=1}^{d}\bigl(\hat\eta_j(g_{m})
- \hat\eta_j(g_{m-1}) \bigr)
\\
&=& \sum_{m=\hat{m}+1}^{m_0} \sum
_{j=1}^d I \bigl(w(g_m)\le \llvert
X_j \rrvert < w(g_{m-1}) \bigr)
\\
&\le&\tau\sum_{m=\hat{m}+1}^{m_0} g_m
\le{\tau} s \sum_{m=2}^{m_0} 2^{m-m_0+1}
\le4\tau s,
\end{eqnarray*}
where we have used the equality $g_m = 2^m$  and \eqref{eq:th:adapt1_bis}.
Thus,
%
\begin{equation}
\label{term10}
\begin{aligned}
I_1 & \le \frac{1}s \mathbf{E}_\theta
\bigl( \bigl\llvert \hat\eta (g_{\hat{m}}) - \hat\eta(g_{m_0})
\bigr\rrvert I(\hat{m} \le m_0) \bigr) + \frac{1}s
\mathbf{E}_\theta \bigl\llvert \hat\eta(g_{m_0}) - \eta \bigr
\rrvert
\\
& \leq 4\tau+ \frac{1}s \mathbf{E}_\theta \bigl\llvert \hat
\eta (g_{m_0}) - \eta \bigr\rrvert .
\end{aligned}
\end{equation}
Next, note that the first inequality in \eqref{term1} is true for any $t>0$. Applying it with $t=w(g_{m_{0}})$, we obtain
%
\begin{equation}
\label{term11}
\begin{aligned}
\frac{1}s \mathbf{E}_\theta \bigl\llvert \hat
\eta(g_{m_0}) - \eta \bigr\rrvert &\leq \frac{d}s \mathbf{P}\bigl( \llvert \xi \rrvert \geq w(g_{m_0})\bigr)\\
&\quad {} +
\mathbf{P}\bigl( \llvert \xi \rrvert >\bigl(a_0(s,A)-
w(g_{m_0})\bigr)_+\bigr)
\end{aligned}
\end{equation}
where $\xi$ is a standard Gaussian random variable. Using the bound on
the Gaussian tail probability and the fact that $g_{m_0} > s\ge g_{m_0}/2$, we get
%
\begin{equation}
\label{term12}
\begin{aligned}
 \frac{d}s  \mathbf{P}\bigl( \llvert
\xi \rrvert \geq w(g_{m_0})\bigr)&\le\frac{d/s}{d/g_{m_0}-1}
\frac{\pi^{-1/2}}{\sqrt
{\log(d/g_{m_0} - 1)}}
\\
&\le \frac{d}{d-2s} \frac{2\pi^{-1/2}}{\sqrt{\log(d/s - 1)}} \le \frac{4\pi^{-1/2}}{\sqrt{\log(d/s^*_d - 1)}}.
\end{aligned}
\end{equation}
To bound the second probability on the right-hand side of \eqref
{term11}, we use the following lemma.
%
\begin{lemma}\label{lem1} Under the assumptions of Theorem~\ref
{th:adapt}, for any $m\ge m_0$ we have
%
\begin{equation}
\label{eq:lem1} \mathbf{P}\bigl( \llvert \xi \rrvert >\bigl(a_0(s,A)-
w(g_{m})\bigr)_+\bigr)\le \bigl(\log \bigl({d}/{s^*_d} -1
\bigr) \bigr)^{-\frac{1}{2}}.
\end{equation}
\end{lemma}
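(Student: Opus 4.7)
The plan is to reduce to the single case $m=m_0$ and then deterministically lower-bound the argument of the Gaussian probability well enough that \eqref{AS} yields the required decay. Throughout I write $v^*=\log(d/s^*_d-1)$ and $v=\log(d/s-1)$, so that $v\ge v^*\to\infty$.

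Since $w(\cdot)$ is decreasing and $g_m\ge g_{m_0}$ for $m\ge m_0$, we have $w(g_m)\le w(g_{m_0})$, so $(a_0(s,A)-w(g_m))_+\ge (a_0(s,A)-w(g_{m_0}))_+$ and it suffices to bound the probability at $m=m_0$. The main obstacle I anticipate is that the naive inequality $w(g_{m_0})\le w(s)$ can fail in the boundary case $m_0=M$, where $g_M$ may be smaller than $s$. However, inequality \eqref{eq:th:adapt1_bis} already gives $g_{m_0}\ge s/2$, and combined with $s\le s^*_d\le d/4$ this yields $d/g_{m_0}-1\le 2d/s-1\le C(d/s-1)$ for an absolute constant $C$, so that
$$w(g_{m_0})^2 \le w(s)^2 + 2\log C = 2v+2\log C.$$

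Next I would use the identity $\sqrt{x}-\sqrt{y}=(x-y)/(\sqrt{x}+\sqrt{y})$ together with the fact that condition \eqref{a_d} forces $A\sqrt{v}\to\infty$ (so $A\sqrt{v}-2\log C\ge A\sqrt{v}/2$ eventually) to obtain
$$a_0(s,A)-w(g_{m_0})\ \ge\ \sqrt{2v+A\sqrt{v}}-\sqrt{2v+2\log C}\ \ge\ \frac{A\sqrt{v}}{4\sqrt{2v+A\sqrt{v}}}.$$
A case split on whether $A\le 2\sqrt{v}$ or $A\ge 2\sqrt{v}$ then lower-bounds this by $\min(A/8,\sqrt{v}/4)$: in the first case $\sqrt{2v+A\sqrt{v}}\le 2\sqrt{v}$, and in the second $2v+A\sqrt{v}\le 2A\sqrt{v}$ (after which $A\ge 2\sqrt{v}$ converts $\sqrt{A}v^{1/4}$ into $\sqrt{v}$).

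Finally, the assumption $A\ge c_0\sqrt{\log v^*}$ with $c_0\ge 16$ gives $A/8\ge 2\sqrt{\log v^*}$, while $v\ge v^*\to\infty$ ensures $\sqrt{v}/4\ge 2\sqrt{\log v^*}$ for $d$ large enough (since $v^*\ge 64\log v^*$ eventually). Hence $a_0(s,A)-w(g_{m_0})\ge 2\sqrt{\log v^*}$ eventually, and \eqref{AS} with $y=2\sqrt{\log v^*}$ gives
$$\Pb(|\xi|>2\sqrt{\log v^*})\ \le\ \frac{(v^*)^{-2}}{\sqrt{\pi/2}\cdot 2\sqrt{\log v^*}}\ \le\ (v^*)^{-1/2}$$
for $d$ large enough, which is the claimed bound.
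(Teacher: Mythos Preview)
Your proof is correct and follows essentially the same route as the paper: both lower-bound $a_0(s,A)-w(g_{m_0})$ via the algebraic identity $\sqrt{x+y}-\sqrt{x}\ge y/(2\sqrt{x+y})$ and then case-split according to the relative sizes of $A$ and $\sqrt{v}$ to obtain a lower bound of the order $\min(A,\sqrt{v})$, after which the hypothesis $A\ge c_0\sqrt{\log v^*}$ with $c_0\ge 16$ and the Gaussian tail bound finish the job. The one notable difference is in the handling of the boundary case $m_0=M$ with $s\ge g_M$: the paper treats it separately by controlling the correction $w(s^*_d/2)-w(s^*_d)$, whereas you absorb it uniformly by invoking \eqref{eq:th:adapt1_bis} to get $w(g_{m_0})^2\le 2v+2\log C$, which is a minor but genuine simplification.
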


Combining \eqref{term11}, \eqref{term12} and \eqref{eq:lem1} with
$m=m_0$, we find
%
\begin{eqnarray}
\label{term141} \frac{1}s \mathbf{E}_\theta \bigl\llvert \hat
\eta(g_{m_0}) - \eta \bigr\rrvert & \leq& \frac{4\pi^{-1/2}+1}{\sqrt{\log(d/s^*_d - 1)}} ,
\end{eqnarray}
which together with \eqref{term10} leads to the bound
%
\begin{eqnarray}
\label{term14} I_1 & \leq& 4\tau+ \frac{4\pi^{-1/2}+1}{\sqrt{\log(d/s^*_d - 1)}} .
\end{eqnarray}
We now turn to the evaluation of $I_2$. {It is enough to consider the
case $m_0\le M-1$ since $I_2=0$ when $m_0=M$. }We have
%
\begin{equation}
\label{term15}
\begin{aligned}
 I_2 &= \frac{1}s \sum
_{m=m_0+1}^M \mathbf{E}_\theta \bigl( \bigl
\llvert \hat \eta(g_{\hat{m}}) - \eta \bigr\rrvert I( \hat{m} = m)
\bigr)
\\
& \leq \frac{1}s \sum_{m=m_0+1}^M
\bigl(\mathbf{E}_\theta \bigl\llvert \hat \eta(g_{m}) - \eta
\bigr\rrvert^{2} \bigr)^{1/2} \bigl(\mathbf{P}_\theta (\hat{m} =
m) \bigr)^{1/2}.
\end{aligned}
\end{equation}
By definition, the event $\{\hat{m} = m\}$ occurs implies that $\sum_{j=1}^d I( w_m\le
 \llvert  X_j \rrvert < w_{m-1} ) > \tau g_m\triangleq
v_m$, where we set for brevity $w_m=w(g_{m})$. Thus,
%
\begin{eqnarray}
\mathbf{P}_\theta(\hat{m} = m) & \leq&  \mathbf{P}_\theta \Biggl(\sum
_{j=1}^d I\bigl( w_m\le
\llvert X_j \rrvert < w_{m-1} \bigr) > v_m
\Biggr). \label{I2}
\end{eqnarray}
By Bernstein's inequality, for any $t>0$ we have
%
\begin{equation}
\label{I2B}
\begin{aligned}
& \mathbf{P}_\theta \Biggl(\sum
_{j=1}^d I\bigl( w_m\le \llvert
X_j \rrvert < w_{m-1} \bigr) - \mathbf{E}_\theta
\Biggl( \sum_{j=1}^d I\bigl(
w_m\le \llvert X_j \rrvert < w_{m-1} \bigr)
\Biggr) > t \Biggr)
\\
&\qquad\leq\exp \biggl( - \frac{t^2/2}{ \sum_{j=1}^d \mathbf
{E}_\theta
 ( I( w_m\le \llvert  X_j \rrvert < w_{m-1})
) +2t/3 } \biggr),
\end{aligned}
\end{equation}
where we have used that, for random variables with values in $\{0,1\}$,
the variance is smaller than the expectation.

Now, similar to \eqref{term1}, for any $\theta\in\Theta_d(s,a_0(s,A))$,
\[
\begin{aligned}
&\mathbf{E}_\theta \Biggl( \sum_{j=1}^d
I\bigl(w_m\le \llvert X_j \rrvert < w_{m-1}
\bigr) \Biggr)
\\
&\qquad \le d \mathbf{P} \bigl( w_m \le \llvert \xi \rrvert <
w_{m-1} \bigr) + \sum_{j:\theta_j\ne0}\mathbf{P}
\bigl( \llvert \theta_j+ \xi \rrvert < w_{m-1} \bigr)
\\
&\qquad  \leq d \mathbf{P} \bigl( \llvert \xi \rrvert \ge w_{m}
\bigr)+ s \mathbf{P}\bigl( \llvert \xi \rrvert > -\bigl(a_0(s,A) -
w_{m-1}\bigr)_+\bigr),
\end{aligned}
\]
where $\xi$ is a standard Gaussian random variable. Since $m\ge
m_0+1$, from Lemma~\ref{lem1} we get
\begin{equation}
\label{term17} \mathbf{P}\bigl( \llvert \xi \rrvert >\bigl(a_0(s,A)-
w_{m
-1}\bigr)_+\bigr)\le \bigl( \log \bigl({d}/{s^*_d} -1
\bigr) \bigr)^{-\frac{1}{2}}.
\end{equation}
Next, using the bound on the Gaussian tail probability and the
inequalities $g_m\le s^*_d\le d/4$, we find
\begin{equation}
\label{term18} d \mathbf{P} \bigl( \llvert \xi \rrvert \ge w_{m}
\bigr)\le\frac{d}{d/g_m- 1} \frac{\pi^{-1/2}}{\sqrt{\log
(d/g_m- 1)}}\le\frac{(4/3)\pi^{-1/2}g_m}{\sqrt{\log
(d/s^*_d - 1)}}.
\end{equation}
We now deduce from \eqref{term17} and \eqref{term18}, and the
inequality $s\le g_m$ for $m\ge m_0+1$, that
%
\begin{equation}
\label{I221} \mathbf{E}_\theta \Biggl( \sum
_{j=1}^d I\bigl( w_m\le \llvert
X_j \rrvert < w_{m-1}\bigr) \Biggr) \leq
\frac{ ((4/3)\pi^{-1/2}+1 )g_m}{\sqrt{\log(d/s^*_d
- 1)}} \le2 \tau g_m.
\end{equation}
Taking in \eqref{I2B} $t=3\tau g_m=3v_m$ and using \eqref
{I221}, we find
\begin{equation*}
\label{term19} \mathbf{P}_\theta \Biggl(\sum
_{j=1}^d I\bigl( w_m\le \llvert
X_j \rrvert < w_{m-1} \bigr) > v_m \Biggr)
\leq\exp(-C_1 v_m)=\exp\bigl(-C_1
2^m\tau\bigr),
\end{equation*}
for some absolute constant $C_1>0$. This implies
%
\begin{equation}
\label{term20} \mathbf{P}_\theta(\hat{m} = m)\le\exp\bigl(-C_1
2^m \tau\bigr).
\end{equation}

On the other hand, notice that the bounds \eqref{term11}, and \eqref
{term12} are valid not only for $g_{m_0}$ but also for any $g_{m}$ with
$m\ge m_0+1$. Using this observation and Lemma~\ref{lem1} we get that,
for any $\theta\in\Theta_d(s,a_0(s,A))$ and any $m\ge m_0+1$,
%
\begin{equation}\label{term241}
\begin{aligned}
\mathbf{E}_\theta \bigl\llvert \hat\eta(g_{m}) - \eta \bigr
\rrvert & \leq s \biggl[\frac{d/s}{d/g_{m}-1} \frac{\pi^{-1/2}}{\sqrt{\log
(d/g_{m} - 1)}} + \bigl( \log
\bigl({d}/{s^*_d} -1 \bigr) \bigr)^{-\frac{1}{2}} \biggr]
\\
& \leq \frac{ ((4/3)\pi^{-1/2}+1 )g_m}{\sqrt{\log(d/s^*_d
- 1)}} \triangleq\tau' g_m=
\tau' 2^m,
\end{aligned}
\end{equation}
where the last inequality follows from the same argument as in \eqref{term18}. We denote by $Var_{\theta}\left( \llvert \hat\eta(g_{m}) - \eta \bigr
\rrvert \right)$ the variance of $\llvert \hat\eta(g_{m}) - \eta \bigr
\rrvert$. Observing that $\llvert \hat\eta(g_{m}) - \eta \bigr
\rrvert$ is a sum of independent Bernoulli random variables, we get
\begin{equation*}
\begin{aligned}
\mathbf{E}_\theta \bigl\llvert \hat\eta(g_{m}) - \eta \bigr
\rrvert^{2} & = Var_{\theta}\left( \llvert \hat\eta(g_{m}) - \eta \bigr
\rrvert \right) + \left( \mathbf{E}_\theta \bigl\llvert \hat\eta(g_{m}) - \eta \bigr
\rrvert\right)^{2}
\\
& \leq  \mathbf{E}_\theta \bigl\llvert \hat\eta(g_{m}) - \eta \bigr
\rrvert+ \left( \mathbf{E}_\theta \bigl\llvert \hat\eta(g_{m}) - \eta \bigr
\rrvert\right)^{2}.
\end{aligned}
\end{equation*}
Using \eqref{term241} and the fact that $\tau'$ is bounded, we get that
\begin{equation}\label{term243}
\mathbf{E}_\theta \bigl\llvert \hat\eta(g_{m}) - \eta \bigr
\rrvert^{2} \leq C_{2} \tau' 2^{2m},
\end{equation}
for some absolute constant $C_2>0$.

Now, we plug \eqref{term20} and \eqref{term243} in \eqref{term15} to obtain
%
\begin{equation*}
\begin{aligned}
I_2 &\le \frac{(C_2\tau')^{1/2}}s \sum_{m=m_0+1}^M
2^{m}\exp\bigl(-C_1 2^{m-1}\tau\bigr)
\\
&\le C_3 \bigl(\tau'\bigr)^{1/2}
\tau^{-1} \exp\bigl(-C_1 2^{m_0-1}\tau\bigr)\le
C_3 \bigl(\tau'\bigr)^{1/2}
\tau^{-1}
\end{aligned}
\end{equation*}
for some absolute constant $C_3>0$. Notice that $ (\tau')^{1/2} =
O ( (\log ({d}/{s^*_d} -1 ) )^{-\frac{1}{4}}
)$ as ${d}/{s^*_d}\to\infty$ while
$\tau^{-1}= O ( (\log ({d}/{s^*_d} -1 )
)^{\frac{1}{7}} )$. Thus, $I_2=o(1)$ as $d\to\infty$. Since from
\eqref{term14} we also get that $I_1=o(1)$ as $d\to\infty$, the
proof is complete.
\end{proof}

\begin{proof}[Proof of Lemma~\ref{lem1}]
Let first $s<g_M$. Then, by definition of $m_0$, we have $s<g_{m_0}$.
Therefore, $s<g_m$ for $m\ge m_0$, and we have $w(g_{m})<w(s)$. It
follows that
\[
a_0(s,A)- w(g_{m}) \ge a_0(s,A)- w(s) \ge
\frac{\sqrt{A}}{2\sqrt{2}} \min \biggl(\frac{\sqrt{A}}{\sqrt{2}}, \log^{1/4} ({d}/s -1
) \biggr),
\]
where we have used the elementary inequalities
\[
\sqrt{x+y}-\sqrt{x}\ge y/(2\sqrt{x+y}) \ge(2\sqrt{2})^{-1} \min (y/
\sqrt{x},\sqrt{y} )
\]
with $x=2\log ({d}/s -1 )$ and $y=A\sqrt{\log ({d}/s
-1 )}$. By assumption, $A\ge\break 16\sqrt{\log\log
({d}/{s^*_d} -1 )}$, so that we get
%
\begin{equation}
\label{term111} a_0(s,A)- w(g_{m}) \ge
a_0(s,A)- w(s) \ge{ 4} \biggl(\log\log \biggl(\frac{d}{s^*_d} -1
\biggr) \biggr)^{1/2}.
\end{equation}
This and the standard bound on the Gaussian tail probability imply
%
\begin{align}
 \mathbf{P}\bigl( \llvert \xi \rrvert >\bigl(a_0(s,A)-
w(g_{m})\bigr)_+\bigr) &\le \exp\bigl(-\bigl(a_0(s,A)-
w(g_{m})\bigr)^2/2\bigr)\nonumber
\\
&\le \bigl( \log \bigl({d}/{s^*_d} -1 \bigr) \bigr)^{-\frac{1}{2}}.\label{term13}
\end{align}
Let now $s\in[g_M, s^*_d]$. Then $m_0=M$ and we need to prove the
result only for $m=M$. By definition of $M$, we have $s^*_d\le2g_M$.
This and \eqref{term111} imply
\[
\begin{aligned}
a_0(s,A)- w(g_{M}) &\ge a_0(s,A)- w(s) -
\bigl(w\bigl(s^*_d/2\bigr)-w\bigl(s^*_d\bigr)\bigr)\\
& \ge{4}
\biggl(\log\log \biggl(\frac{d}{s^*_d} -1 \biggr) \biggr)^{1/2} -
\bigl(w\bigl(s^*_d/2\bigr)-w\bigl(s^*_d\bigr)\bigr).
\end{aligned}
\]
Now, using the elementary inequality $\sqrt{\log(x+y)}-\sqrt{\log
(x)}\le y/(2x\sqrt{\log(x)})$ with $x=d/s^*_d-1$ and $y=d/s^*_d$, and
the fact that $s^*_d\le d/4$ we find
\[
\begin{aligned}
w\bigl(s^*_d/2\bigr)-w\bigl(s^*_d\bigr) &\le
\frac{1}{\sqrt{2\log(d/s^*_d-1)}} \frac
{d}{d-s^*_d}\le\frac{2\sqrt{2}}{3\sqrt{\log(d/s^*_d-1)}}\\
&\le3 \biggl(\log\log
\biggl(\frac{d}{s^*_d} -1 \biggr) \biggr)^{1/2}.
\end{aligned}
\]
The last two displays yield $
a_0(s,A)- w(g_{M}) \ge (\log\log (\frac{d}{s^*_d}
-1 ) )^{1/2}$, and we conclude as in \eqref{term13}.
\end{proof}
\end{appendix}

\section*{Acknowledgments}
We would like to thank Felix Abramovich for helpful discussion of the results. The work of N.A. Stepanova was supported by an NSERC grant. The work of A.B. Tsybakov was supported by GENES and by the French National Research Agency (ANR) under the grants
IPANEMA (ANR-13-BSH1-0004-02), and Labex ECODEC (ANR - 11-LABEX-0047). It was also supported by the "Chaire Economie et Gestion des Nouvelles Donn\'ees", under the auspices of Institut Louis Bachelier, Havas-Media and Paris-Dauphine.

\bibliographystyle{plain}

\end{document}


\title{Supplement to "Variable selection with Hamming loss"}
\author{Butucea, C.$^{1,2}$, Ndaoud, M.$^{2}$, Stepanova, N.A.$^{3}$, and Tsybakov, A.B.$^{2}$\\
{\small $^1$ Universit\'e Paris-Est Marne-la-Vall\'ee, LAMA(UMR 8050), UPEM, UPEC, CNRS,}\\ {\small F-77454, Marne-la-Vall\'ee, France} \\
{\small $^2$ CREST, ENSAE, Universit\'e Paris-Saclay.
5, ave. Henry le Chatelier,} \\ 
{\small 91120 Palaiseau Cedex, France}\\
{\small $^3$ School of Mathematics and Statistics, Carleton University,
Ottawa, Ontario, K1S 5B6 Canada}
}

\maketitle
\begin{abstract}
\ We derive a general lower bound for the minimax risk over all selectors on the class of at most $s$-sparse vectors. The main term of this bound is a Bayes risk with arbitrary prior and the non-asymptotic remainder term is given explicitly. Using this,
we prove the lower bounds of Theorems~2.2, 3.2 and 3.3 in \cite{BNST}.
\end{abstract}






\section{Nonasymptotic lower bound of the minimax risk}

In the next theorem, we reduce the minimax risk over all selectors to a Bayes risk with arbitrary prior measure $\pi$ on $\{0,1\}^d$ and give a bound on the difference between the two risks. This result is true in a general setup, non necessarily for Gaussian models. For a particular choice of measure $\pi$,  we provide an explicit bound on the remainder term.

Consider the set of binary vectors
$$
\Theta_s = \left\{  \eta \in \{0,1\}^{d} : \  |\eta|_0 \leq s \right\}, \mbox{ where } |\eta|_0 = \sum_{j=1}^d I(\eta_j \not = 0),
$$
and assume that we are given a family $\{P_{\eta}, \eta \in \Theta_s\}$ where each $P_{\eta}$ is a probability distribution on a measurable space $(\mathcal{X}, \mathcal{U})$.
We observe $X$  drawn from $P_{\eta}$ with some unknown $\eta \in \Theta_s$ and we consider the Hamming risk of a selector $\hat{\eta}=\hat{\eta}(X)$:
$$
\sup_{ \eta \in \Theta_s}\mathbf{E}_{\eta}|\hat{\eta} - \eta|
$$
where $\mathbf{E}_{\eta}$ is the expectation with respect to $P_{\eta}$. Here and in what follows we denote by $|\eta - \eta'|$ the Hamming distance between two binary sequences $\eta, \eta' \in \{0,1\}^{d}$, and we call the selector any estimator with values in $\{0,1\}^{d}$. Let $\pi$ be a probability measure on $\{0,1\}^{d}$ (a prior on $\eta$). We denote by $\mathbb{E}_{\pi}$ the expectation with respect to~$\pi$.

\begin{theorem}\label{lowbound}
For any $s<d$ and any probability measure $\pi$ on $\{0,1\}^{d}$, we have
\begin{equation}\label{eeq1:th}
\inf_{\hat{\eta}} \sup_{ \eta \in \Theta_s} \mathbf{E}_\eta |\hat{\eta} - \eta|
\geq  \inf_{\hat T\in [0,1]^d}\mathbb{E}_{\pi}\mathbf{E}_\eta \sum_{j=1}^d | {\hat T}_j(X) - \eta_j | - 4\,\mathbb{E}_{\pi}\left[ |\eta|_0 I ( |\eta|_0 \ge  s+1 ) \right] ,
\end{equation}
where $\inf_{\hat{\eta}}$ is the infimum over all selectors and $\inf_{\hat T\in [0,1]^d}$ is the infimum over all  estimators $\hat T(X)=({\hat T}_1(X),\dots,{\hat T}_d(X))$ with values in $[0,1]^d$.

In particular, if $\pi$ is a product of $d$ Bernoulli distributions with parameters $d$ and $s'/d$ where $s'\in (0, s]$, we have
\begin{equation}\label{eeq2:th}
\inf_{\hat{\eta}} \sup_{ \eta \in \Theta_s} \mathbf{E}_\eta |\hat{\eta} - \eta|
\geq  \inf_{\hat T\in [0,1]^d}\mathbb{E}_{\pi}\mathbf{E}_\eta \sum_{j=1}^d | {\hat T}_j - \eta_j | - 4 s' \exp\Big(-\frac{(s-s')^2}{2s}\Big).
\end{equation}

\end{theorem}

\begin{proof}[Proof of Theorem~\ref{lowbound}]
Throughout the proof, we write for brevity $A=\Theta_s$.
Set $ \eta^{A} = \eta I(\eta \in A)$ and denote by $\pi_A$ the probability measure $\pi$ conditioned by the event $\{\eta \in A\}$,  that is, for any $C\subseteq \{0,1\}^{d}$,
$$
\pi_A(C) = \frac{\pi (C\cap \{\eta \in A\})}{\pi(\eta \in A)}\,.
$$
The measure $\pi_A$ is supported on $A$ and we have
\begin{eqnarray*}
\inf_{\hat{\eta}} \sup_{ \eta \in A} \mathbf{E}_\eta |\hat{\eta} - \eta|
&\ge&
 \inf_{\hat{\eta}} \mathbb{E}_{\pi_A}\mathbf{E}_\eta |\hat{\eta} - \eta|
= \inf_{\hat{\eta}} \mathbb{E}_{\pi_{A}}\mathbf{E}_\eta |\hat{\eta} - \eta^A|
\\
&\ge&   \sum_{j=1}^{d} \inf_{\hat{T_j}} \mathbb{E}_{\pi_A}\mathbf{E}_\eta |\hat{T}_j - \eta^A_j |
\end{eqnarray*}
where $\inf_{\hat T_j}$ is the infimum over all estimators $\hat T_j= \hat T_j(X)$ with values in $\mathbb{R}$.
According to
Theorem~1.1 and Corollary~1.2 on page 228 in \cite{LC}, there exists a Bayes estimator $B^A_j=B^A_j(X)$ such that
$$
\inf_{\hat{T_j}} \mathbb{E}_{\pi_A}\mathbf{E}_\eta |\hat{T}_j - \eta^A_j |=\mathbb{E}_{\pi_A}\mathbf{E}_\eta |B^A_j - \eta^A_j |,
$$
and this estimator is a conditional median of $\eta^A_j$ given $X$; in particular, for any estimator $\hat T_j(X)$ we have
\begin{equation}\label{eeq0}
\mathbb{E}^A\big(|B^A_j(X) - \eta^A_j | \big| X\big) \le \mathbb{E}^A\big(|\hat T_j(X) - \eta^A_j | \big| X\big)
\end{equation}
almost surely. Here, the superscript $A$ indicates that the conditional expectation $\mathbb{E}^A(\cdot|X)$ is taken when $\eta$ is distributed according to $\pi_A$.
Therefore,
\begin{equation}\label{eeq1}
\inf_{\hat{\eta}} \sup_{ \eta \in A} \mathbf{E}_\eta |\hat{\eta} - \eta|
\geq \mathbb{E}_{\pi_A}\mathbf{E}_\eta \sum_{j=1}^{d} | B^A_j - \eta^{A}_{j}| .
\end{equation}
Note that $B^A_j\in [0,1]$ since $\eta^A_j$ takes its values in $[0,1]$.
Using this, we obtain
\begin{align}\nonumber
  \inf_{ \hat{T}\in[0,1]^p} \mathbb{E}_{\pi}\mathbf{E}_\eta | \hat{T} - \eta | & \leq \mathbb{E}_{\pi}\mathbf{E}_\eta \sum_{j=1}^{d} | B^A_j - \eta_{j}| \\ \nonumber
    & = \mathbb{E}_{\pi}\mathbf{E}_\eta  \Big(\sum_{j=1}^{d} |B^A_j - \eta_{j}| I(\eta \in A) \Big) + \mathbb{E}_{\pi}\mathbf{E}_\eta
   \Big(\sum_{j=1}^{d} |B^A_j - \eta_{j}|  I(\eta \in A^c) \Big)
  \\  \nonumber
  & = \mathbb{E}_{\pi_A}\mathbf{E}_\eta  \sum_{j=1}^{d} |B^A_j - \eta_{j}^A|  + \mathbb{E}_{\pi}\mathbf{E}_\eta
   \Big(\sum_{j=1}^{d} |B^A_j - \eta_{j}|  I(\eta \in A^c)  \Big)
   \\  \label{eeq2}
  & \le \mathbb{E}_{\pi_A}\mathbf{E}_\eta  \sum_{j=1}^{d} |B^A_j - \eta_{j}^A|  +
  \mathbb{E}_{\pi}\mathbf{E}_\eta
   \sum_{j=1}^{d} B^A_j  I(\eta \in A^c)
   +
   \mathbb{E}_{\pi}
   \sum_{j=1}^{d} \eta_j I (\eta \in A^c) .
 \end{align}
Our next step is to bound the term
$$
\mathbb{E}_{\pi}\mathbf{E}_\eta
   \sum_{j=1}^{d} B^A_j I(\eta \in A^c).
$$
For this purpose, we first note that inequality \eqref{eeq0} with $\hat T_j(X) = \mathbb{E}^A(\eta_j^A|X)$ implies that
$$
B^A_j (X) = \mathbb{E}^A(B_j^A(X)|X) \le \mathbb{E}^A\big(|\mathbb{E}^A(\eta_j^A|X)| \big| X\big) + 2\mathbb{E}^A\big(|\eta_j^A|\big|X\big) = 3\mathbb{E}^A(\eta_j^A|X)
$$
where we have used the fact that $\eta_j^A\in [0,1]$. Since $\sum_{j=1}^{d}\eta_{j}^{A} \leq s $ (cf. definition of $\eta_j^A$),  we find that
$
\sum_{j=1}^{d} B^A_j \le 3s.
$
Finally, as $\sum_{j=1}^{d} \eta_j>s$ on $A^c$ we get $\sum_{j=1}^{d} B^A_jI(\eta \in A^c) \le 3\sum_{j=1}^{d} \eta_j I(\eta \in A^c)$, and thus
$$
\mathbb{E}_{\pi}\mathbf{E}_\eta
    \sum_{j=1}^{d} B^A_j I(\eta \in A^c)\le
   3\,\mathbb{E}_{\pi}
    \sum_{j=1}^{d} \eta_j I(\eta \in A^c).
$$
Combining this inequality with \eqref{eeq1} and \eqref{eeq2} yields \eqref{eeq1:th}.

\bigskip

We now prove inequality \eqref{eeq2:th}. In this case, $\sum_{j=1}^{d} \eta_j:= \zeta$ has the binomial distribution~${\mathcal B}(d,q)$ with parameters $d$ and $q=s'/d$. Then,
\begin{align*}
\mathbf{E}(\zeta I(\zeta \ge s+1))& = \sum_{k=s+1}^{d} k {d\choose k} q^k(1-q)^{d-k}\\
& = \sum_{k=s+1}^{d} \frac{d(d-1)!}{(k-1)!(d-k)!} q^k(1-q)^{d-k}\\
& = dq \sum_{k=s+1}^{d} {d-1\choose k-1} q^{k-1}(1-q)^{d-k}\\
& = dq \sum_{m=s}^{d-1} {d-1\choose m} q^{m}(1-q)^{(d-1)-m}\\
&= s' \mathbf{P}({\mathcal B}(d-1,s'/d)\ge s) \le s' \mathbf{P}({\mathcal B}(d,s'/d)\ge s).
\end{align*}
Thus, to complete the proof of  \eqref{eeq2:th}, it is enough to bound the probability $\mathbf{P}({\mathcal B}(d,s'/d)\ge s)$. To this end, we use the following lemma, which is a  combination of formulas (3)  and (10) on pages 440--441 in~\cite{SW}.

\begin{lemma}\label{binomial}
	Let $\mathcal{B}(d,q)$ be the binomial random variable with parameters $d$ and~$q\in (0,1)$.
	Then, for any $\lambda>0$,
	\begin{align}\label{binomial1}
		&\mathbf{P}\big(\mathcal{B}(d,q)\ge\lambda \sqrt{d}+dq\big) \le \exp\bigg(-\frac{\lambda^{2}}{2q(1-q)\big(1+\frac{\lambda}{3q\sqrt{d}}\big)}\bigg).
		\end{align}
		\end{lemma}
Applying this lemma with $q=s'/d$ and $\lambda=(s-s')/\sqrt{d}$ we find that
$$
\mathbf{P}({\mathcal B}(d,s'/d)\ge s)\le \exp\bigg(-\frac{(s-s')^{2}}{2s}\bigg).
$$
Thus, \eqref{eeq2:th} follows.

\end{proof}

{\bf Remark.} If we take $s'= s-(3/2)\sqrt{s\log(s)}$ (which is possible since for all $s\ge 2$ we have $s'>0$) inequality \eqref{eeq2:th} implies that, for all $s\ge2$,
\begin{equation*}\label{eeq3:th}
\inf_{\hat{\eta}} \sup_{ \eta \in A_s} \mathbf{E}_\eta |\hat{\eta} - \eta|
\geq  \inf_{\hat T\in [0,1]^d}\mathbb{E}_{\pi}\mathbf{E}_\eta \sum_{j=1}^d | {\hat T}_j - \eta_j | - 4 s^{-1/8}.
\end{equation*}


\section{Proofs of some results in \cite{BNST}}

\begin{proof}[Proof of the second lower bound in Theorem~2.2]
This bound follows directly from the lower bounds of Theorems~3.2 and~3.3 that hold
for general distributions.
\end{proof}

\begin{proof}[Proof of Theorem~3.2]
The upper bound $\sup_{\eta \in \Theta_s
}\E_\eta |\hat \eta - \eta| \le \Psi(d,s) sd/(d-s)$ is straightforward in view of the definition of $\hat \eta$.

We now 
prove the lower bound of Theorem~3.2.
%
%
First note that, in view of \eqref{eeq2:th}, the proof is reduced  to showing  that
\begin{equation}\label{eq:sep}
    \inf_{\hat T\in [0,1]^d}\mathbb{E}_{\pi}\mathbf{E}_\eta \sum_{j=1}^d | {\hat T}_j - \eta_j | \geq s'\Psi(d,s),
\end{equation}
where $\pi$ is a product on $d$ Bernoulli distributions with parameter $s'/d$ and $s'\in (0, s]$.
We have 
\begin{eqnarray*}\inf_{\hat T\in [0,1]^d}\mathbb{E}_{\pi}\mathbf{E}_\eta \sum_{j=1}^d | {\hat T}_j - \eta_j | & \geq &\sum_{j=1}^d \inf_{\hat T_{j}\in [0,1]}\mathbb{E}_{\pi}\mathbf{E}_\eta  | {\hat T}_j(X) - \eta_j |
\\ 
&\ge &\sum_{j=1}^d \mathbb{E}_{\pi}\mathbf{E}_\eta \Big( \inf_{\hat T_{j}\in [0,1]} \mathbb{E}_{\pi}\mathbf{E}_\eta  \big(| {\hat T}_j(X) - \eta_j | \, \big| \{(X_j, \eta_j), j\ne i\}\big)\Big).
\end{eqnarray*}
Since the components $X_j$ of $X=(X_1,\dots,X_d)$ are independent, the $j$th  conditional expectation in the last expression reduces to the unconditional expectation over $(X_j,\eta_j)$, which is bounded from below by 
$$
\inf_{T\in [0,1]}\Big(\Big(1-\frac{s'}{d}\Big)E_0 (T) + \frac{s'}{d}E_1 (1-T)\Big) =  s'\Psi(d,s')/d.
$$
Here, $E_i$ is the expectation with respect to $P_i$, $i=0,1$. Thus,
$$ \inf_{\hat T\in [0,1]^d}\mathbb{E}_{\pi}\mathbf{E}_\eta \sum_{j=1}^d | {\hat T}_j - \eta_j | \geq s'\Psi(d,s'). $$
To finish the proof of \eqref{eq:sep}, it remains to show that
$\Psi(d,s') \geq \Psi(d,s)$ for all $s'\leq s$. For this purpose, we extend $\Psi(d,\cdot)$ to $\mathbb{R}_+$ by defining, for all $u>0$,
 \begin{eqnarray*}
 \Psi(d,u) &=& P_{1}\left( uf_{1}(X_1) - (d-u)f_{0}(X_1) < 0 \right) + \left(\frac{d}{u}-1\right)P_{0}\left( uf_{1}(X_1) - (d-u)f_{0}(X_1) \geq 0 \right)\\
 &=& 1- E_1 [ I(Y(u) \geq 0)] + \left( \frac{d}{u}-1 \right) E_0[ I(Y(u) \geq 0)],
 \end{eqnarray*}
 where $Y(u) = uf_{1}(X_1) - (d-u)f_{0}(X_1)$.
For $\epsilon>0$, we define the function $g_{\epsilon}:\mathbb{R} \to \mathbb{R}_+$ by
$$ g_{\epsilon}(u) =  \frac{2u^{2}}{\epsilon^{2}} I\left( 0 \leq u < \frac{\epsilon}2 \right)+ \left( 1- \frac{2(\epsilon - u)^{2}}{\epsilon^{2}} \right) I\left( \frac{\epsilon}2 \leq u < \epsilon \right) + I\left( u \geq \epsilon \right) .  $$
It is easy to check that  $g_{\epsilon}$ is continuously differentiable on $\mathbb{R}$ and that, for all $u$ in $\mathbb{R}$,
$$
\underset{\epsilon \to 0}{\lim}\, g_{\epsilon}(u) = I( u \geq 0 ) \mbox{ and }
u g_{\epsilon}'(u) \geq 0.
$$
Finally, for $\epsilon>0$  and $u>0$, we define $\Psi_{\epsilon}(d,u)$ by the formula
$$
\Psi_{\epsilon}(d,u) = 1 - E_1\left[ g_{\epsilon}(Y(u) )   \right] + \left( \frac{d}{u}-1 \right) E_0\left[ g_{\epsilon}(Y(u)) \right] .  $$
An application of the dominated convergence theorem proves that $ \underset{\epsilon \to 0}{\lim}\, \Psi_{\epsilon}(d,u) = \Psi(d,u)$ for all
$u >0$, and that one can differentiate in the expression for $\Psi_{\epsilon}(d,\cdot)$ under the expectation signs on $\mathbb{R}_+$.  We also note that $\Psi_{\epsilon}(d,\cdot)$ is decreasing on $\mathbb{R}_{+}$. Indeed, for any $u>0$,
\begin{align*}
    \frac{\partial }{\partial u}\Psi_{\epsilon}(d,u) &= -\frac{d}{ u^{2}}E_0\left[ g_{\epsilon}(uf_{1}(X_1) - (d-u)f_{0}(X_1)) \right] \\
    &- \sum_{i=0,1} E_{i} \Big[g_{\epsilon}'(uf_{1}(X_1) - (d-u)f_{0}(X_1)) \frac{ uf_{1}(X_1) - (d-u)f_{0}(X_1)}{u} \Big].
\end{align*}
Using the inequality $ w\,g_{\epsilon}'(w) \geq 0$ we get that $\Psi_{\epsilon}(d,\cdot)$ is decreasing on $\mathbb{R}_{+}$.
Finally, pointwise convergence of $\Psi_{\epsilon}(d,\cdot )$ to $\Psi(d,\cdot )$ implies that $\Psi(d,\cdot )$ is also decreasing on $\mathbb{R}_{+}$.
\end{proof}

\begin{proof}[Proof of Theorem~3.3]
We have
\begin{eqnarray*}
\sup_{\theta \in \Theta_d^+(s, a_0,a_1)}\frac 1s \E_\theta |\hat \eta - \eta|
&=& \sup_{a \geq a_1} \frac{|S|}s \Pb_a \Big(\log \frac{f_1}{f_0}(X_1) < \log \Big(\frac ds - 1\Big)\Big)\\
&& + \sup_{a\leq  a_0} \Big(\frac{d-|S|}s\Big)  \Pb_a \Big(\log \frac{f_1}{f_0}(X_1) \geq \log \Big(\frac ds - 1\Big)\Big) \\
&=& \frac{|S|}s \Pb_{a_1} \Big(\log \frac{f_1}{f_0}(X_1) < \log \Big(\frac ds - 1\Big)\Big)\\
&& + \Big(\frac{d-|S|}s\Big)  \Pb_{a_0} \Big(\log \frac{f_1}{f_0}(X_1) \geq \log \Big(\frac ds - 1\Big)\Big)\\
& \leq & \Psi(d,s) \frac d{d-s},
\end{eqnarray*}
where the last equality is due to the monotonicity of $\log \frac{f_1}{f_0} (X)$ and to the stochastic order of the family $\{{\sf f}_{a}, \, a \in \mathcal{U}\}$.

The lower bound on the minimax risk $$\inf_{\tilde \eta} \sup_{\theta \in \Theta_d^+(s, a_0,a_1)} \frac 1s \E_\theta|\tilde \eta - \eta|$$
follows from the lower bound of Theorem~3.2 by taking there $f_0={\sf f}_{a_0}$ and $f_1={\sf f}_{a_1}$. \end{proof}




\bigskip